\documentclass[a4,12pt]{amsart}

\oddsidemargin 0mm
\evensidemargin 0mm
\topmargin 0mm
\textwidth 160mm
\textheight 230mm
\tolerance=9999

\usepackage{amssymb}
\usepackage{amstext}
\usepackage{amsmath}
\usepackage{amscd}
\usepackage{amsthm}
\usepackage{amsfonts}
\usepackage{enumerate}
\usepackage{graphicx}
\usepackage{latexsym}
\usepackage[all]{xy}
\usepackage[usenames]{color}
\usepackage{mathrsfs}
\usepackage{color}


\newenvironment{red}
{\relax\color{red}}
{\hspace*{.5ex}\relax}

\newcommand{\ber}{\begin{red}}
\newcommand{\er}{\end{red}}

\newenvironment{verd}
{\relax\color{magenta}}
{\hspace*{.5ex}\relax}

\newcommand{\bg}{\begin{verd}}
\newcommand{\eg}{\end{verd}}


\newtheorem*{corollary*}{Corollary}
\newtheorem{theorem}{Theorem}[section]

\newtheorem{lemma}[theorem]{Lemma}
\newtheorem{proposition}[theorem]{Proposition}
\newtheorem*{proposition*}{Proposition}

\newtheorem*{claim*}{Claim}

\theoremstyle{definition}
\newtheorem{definition}[theorem]{Definition}
\newtheorem{remark}[theorem]{Remark}
\newtheorem{example}[theorem]{Example}

\newtheorem{definitiontheorem}[theorem]{Definition-Theorem}

\theoremstyle{remark}

\numberwithin{equation}{theorem}
\renewcommand{\mod}{\operatorname{mod}}

\newcommand{\End}{\operatorname{End}}
\newcommand{\Hom}{\operatorname{Hom}}

\newcommand{\Ext}{\operatorname{Ext}}




\newcommand{\K}{\mathcal{K}}

\renewcommand{\O}{\mathcal{O}}


\newcommand{\Z}{{\mathbb{Z}}}
\newcommand{\Q}{{\mathbb{Q}}}

\newcommand{\Ker}{\operatorname{Ker}}
\renewcommand{\Im}{\operatorname{Im}}
\newcommand{\Coker}{\operatorname{Coker}}

\newcommand{\Rad}{\operatorname{Rad}}

\newcommand{\Soc}{\operatorname{Soc}}

\newcommand{\Endhom}{\operatorname{End}}

\newcommand{\Der}{\operatorname{Der}}

\setcounter{tocdepth}{1}
\begin{document}
\title[On components of stable AR quivers that contain Heller lattices]{On components of stable Auslander-Reiten quivers that contain Heller lattices: the case of truncated polynomial rings}
\date{}

\author[Susumu Ariki]{Susumu Ariki}
\address{Department of Pure and Applied Mathematics, Graduate School of Information
Science and Technology, Osaka University,  1-5 Yamadaoka, Suita, Osaka 565-0871, Japan}
\email{ariki@ist.osaka-u.ac.jp}

\author[Ryoichi Kase]{Ryoichi Kase}
\address{Faculty of Informatics, Okayama University of Science, 1-1 Ridaicho, Kita-ku, Okayama-shi 700-0005, Japan }
\email{r-kase@mis.ous.ac.jp}

\author[Kengo Miyamoto]{Kengo Miyamoto}
\address{Department of Pure and Applied Mathematics, Graduate School of Information
Science and Technology, Osaka University, 1-5 Yamadaoka, Suita, Osaka 565-0871, Japan}
\email{k-miyamoto@ist.osaka-u.ac.jp}

\thanks{
Before we started this project, the first author had asked his student Takuya Takeuchi for some experimental computation for $n=3$ case. 
We thank him for this computation at the preliminary stage of the research. 
}

\keywords{Auslander-Reiten quiver, Heller lattice, tree class}
\subjclass[2010]{16G70 ; 16G30}

\begin{abstract}
Let $A$ be a truncated polynomial ring over a complete discrete valuation ring $\O$, and we consider the additive category consisting of 
$A$-lattices $M$ with the property that $M\otimes \K$ is projective as an $A\otimes \K$-module, where $\K$ is the fraction field of $\O$. 
Then, we may define the stable Auslander-Reiten quiver of the category. 
We determine the shape of the components of the stable Auslander-Reiten quiver that contain Heller lattices. 
\end{abstract}
\maketitle

\section*{Introduction} 

The shape of Auslander-Reiten quivers is one of fundamental interests in representation theory of algebras. 
For algebras over a field, a wealth of examples are given in textbooks, \cite{ASS} for example. Let $\O$ be a complete discrete valuation ring, 
$\epsilon$ a uniformizer, 
$\K$ its fraction field, $\kappa=\O/\epsilon\O$ its residue field. Let $A$ be an $\O$-order, namely an $\O$-algebra which is free of finite rank 
as an $\O$-module. If $A\otimes \K$ is a semisimple algebra, we may also find results in the literature. 
However, few results seem to be known for the case when  $A\otimes \K$ is not a semisimple algebra. An exception is a famous work by Hijikata and Nishida, but 
their main focus is on a Bass order and $A\otimes \K$ needs to be a quasi-Frobenius radical square zero algebra for a Bass order \cite[Thm.3.7.1]{HN}.

Recall that an $A$-module is called an $A$-lattice or a Cohen-Macaulay $A$-module if it is free of finite rank as an $\O$-module. 
(Cohen-Macaulay $A$-modules are by definition finitely generated $A$-modules which are Cohen-Macaulay as $\O$-modules. 
Since $\O$ is regular here, Cohen-Macaulay $\O$-modules are free \cite[(1.5)]{Y} and vice versa.) 
Then, it is known that for any non-projective $A$-lattice $M$ with the property that $M\otimes \K$ is projective as an $A\otimes \K$-module, there is an 
almost split sequence ending at $M$, and dually, for any non-injective $A$-lattice $M$ with the property that $M\otimes \K$ is injective as an $A\otimes \K$-module, there is an 
almost split sequence starting at $M$. See \cite{AR} for example. Thus, if $A\otimes \K$ is self-injective, we may define the (stable) Auslander-Reiten quiver consisting of 
such $A$-lattices. 
Typical examples of such $A$-lattices are Heller lattices. For group algebras, Heller lattices were studied by Kawata \cite{K}, and 
it inspired us to study the components that contain Heller lattices for the case of orders in non-semisimple algebras. 

In this article, we determine the shape of the components of the stable 
Auslander-Reiten quiver that contain Heller lattices, for the truncated polynomial rings $A=\O[X]/(X^n)$. 
As $\O[X]/(X^n)$ is a Gorenstein $\O$-order, that is, $\Hom_\O(A_A, \O)$ is a projective $A$-module \cite[\S4]{I}, we explain explicit construction of 
almost split sequences for a Gorenstein $\O$-order, which generalizes construction of 
almost split sequences in \cite{T}, and use this construction to do necessary calculations. Main difficulty in the computation is the proof that 
certain direct summands of the middle terms of those almost split sequences are indecomposable. We use elementary 
brute force argument to overcome this difficulty. Then, some argument on tree classes which takes the possibility of the existence of loops in the stable 
Auslander-Reiten quiver into account proves the result. This argument is necessary because there may exist loops \cite{W}. 

If $A\otimes \kappa$ is a special biserial algebra, we may calculate indecomposable $A\otimes \kappa$-modules and their Heller lattices. It is natural to 
consider the above problem in this setting. We will report some results in this direction in future work.

\section{Preliminaries} 

\subsection{Gorenstein orders}

We start by observing that $A=\O[X]/(X^n)$ is a symmetric $\O$-order. By abuse of notation, we write $1,X, \dots, X^{n-1}$ for the standard $\O$-basis of $A$. 
Define $\theta_{i}\in \Hom_\O(A,\O)$, for $0\le i\le n-1$, by 
\[
\theta_{i}(X^{j})=\left\{\begin{array}{cl}
1 & \mathrm{if\ }j=n-i-1,\\
0 & \mathrm{if\ }j\neq n-i-1.\\
\end{array}\right.
\]
Then we have the following lemma. 
\begin{lemma}\label{symmetric order}
$\theta_{i}\mapsto X^{i}$ induces an isomorphism of $(A,A)$-bimodules $\Hom_\O(A,\O)\simeq A$.
\end{lemma}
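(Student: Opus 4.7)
The plan is to verify directly that the $\O$-linear map $\varphi \colon A \to \Hom_\O(A,\O)$ defined on the standard basis by $\varphi(X^i) = \theta_i$ is both an $\O$-module isomorphism and an $(A,A)$-bimodule map. Since $\{1, X, \dots, X^{n-1}\}$ is an $\O$-basis of $A$ and, by the very definition of $\theta_i$, the family $\{\theta_0, \dots, \theta_{n-1}\}$ is the $\O$-dual basis of $\{X^{n-1}, X^{n-2}, \dots, 1\}$ inside $\Hom_\O(A, \O)$, the first assertion is immediate: $\varphi$ sends a basis to a basis.

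For the bimodule property, I would pin down the standard $(A,A)$-bimodule structure on $\Hom_\O(A,\O)$, namely $(a \cdot f \cdot b)(x) = f(bxa)$ for $a,b,x \in A$ and $f \in \Hom_\O(A,\O)$. By $\O$-linearity it suffices to check $\varphi(X^i \cdot X^k \cdot X^j) = X^i \cdot \varphi(X^k) \cdot X^j$ for all $i,j,k \in \{0, \dots, n-1\}$, which I would verify by evaluating both sides at an arbitrary basis element $X^m$. The left-hand side gives $\theta_{i+j+k}(X^m)$, which equals $1$ iff $m = n-1-(i+j+k)$ and $0$ otherwise (with the convention that the symbol $\theta_\ell$ stands for $0$ when $\ell \geq n$, since then $X^{i+j+k} = 0$ in $A$). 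The right-hand side evaluates to $\theta_k(X^{j+m+i})$, which is $1$ iff $i+j+m = n-1-k$, i.e.\ $m = n-1-(i+j+k)$, and $0$ otherwise, the latter case also covering $i+j+k \geq n$ (no such nonnegative $m$ exists). The two sides match, so $\varphi$ is a bimodule isomorphism.

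There is no real obstacle here; the only point requiring care is fixing the convention for the $(A,A)$-bimodule structure on the $\O$-dual in such a way that it is compatible with the chosen indexing $X^i \mapsto \theta_i$. Once that is set, the lemma collapses to the one-line comparison of Kronecker deltas above.
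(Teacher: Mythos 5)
Your proof is correct and takes essentially the same route as the paper: the paper simply records the single relation $X\theta_i=\theta_i X=\theta_{i+1}$, which already pins down the bimodule compatibility because $A$ is generated by $X$ over $\O$, whereas you spell out the equivalent Kronecker-delta check on the full basis for a general $X^i\cdot\theta_k\cdot X^j$. The underlying observation — that $\{\theta_i\}$ is a dual $\O$-basis and the $A$-action shifts its index in lockstep with multiplication by $X$ on $A$ — is identical.
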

\begin{proof}
As $X\theta_{i}=\theta_{i}X: X^{j}\mapsto \theta_{i}(X^{j+1})=\delta_{j+1,n-i-1}$, we have $X\theta_{i}=\theta_{i}X=\theta_{i+1}$. 
\end{proof}

\begin{remark}
A different definition of Gorenstein order is given in \cite[\S37]{CR}: it requires not only that every exact sequence of $A$-lattices  
$0 \to A \to M \to N \to 0$ starting at $A$ splits, but also that $A\otimes\K$ is a semisimple algebra. Perhaps the semisimplicity condition was added by 
some technical reasons.
\end{remark}

\begin{remark}
In \cite[Chap. I, \S7]{A1}, the definition of $\O$-order itself is different. If we restrict to the case when $\O$ is a Dedekind domain, $A$ is an $\O$-order 
in his sense if $A$ is not only a finitely generated projective $\O$-module but also $A\otimes\K$ is a self-injective $\K$-algebra. 

Then, a Gorenstein $\O$-order is a Noetherian $\O$-algebra $A$ which is Cohen-Macaulay as an $\O$-module 
and $\Hom_\O(A,\O)\simeq A$ as $(A,A)$-bimodules \cite[Chap. III, \S1]{A1}. Nowadays, Gorenstein $\O$-orders in Auslander's sense are called 
symmetric $\O$-orders \cite[Def.2.8]{IW}.

Lemma \ref{symmetric order} implies that $A=\O[X]/(X^n)$ is a symmetric $\O$-order. Note that 
$A$ is also a Gorenstein ring, since ${\rm depth}\,A=\dim A$ and if the parameter ideal $\epsilon A$ is the intersection of two ideals $I$ and $J$
then either $I=\epsilon A$ or $J=\epsilon A$ holds.
\end{remark}

\begin{lemma}\label{infinite lattices}
Let $A=\O[X]/(X^n)$, for $n\geq 2$. Then there are infinitely many pairwise non-isomorphic indecomposable $A$-lattices. 
\end{lemma}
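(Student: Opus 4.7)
The plan is to exhibit, for each integer $k\geq 1$, an indecomposable $A$-lattice $L_k$ of $\O$-rank two, and to show these are pairwise non-isomorphic, thereby yielding infinitely many. I would construct $L_k$ by specifying $L_k = \O e_1 \oplus \O e_2$ as an $\O$-module and letting $X$ act by $Xe_1=0$, $Xe_2=\epsilon^k e_1$. Since $X$ then acts as a square-zero nilpotent and $n\geq 2$, this gives a well-defined $A$-module structure, and $L_k$ is by construction free of finite rank as an $\O$-module, hence an $A$-lattice.

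For indecomposability, I would pass to the generic fibre. The module $L_k\otimes_\O\K$ is a two-dimensional $\K$-vector space on which $X$ acts by a nonzero square-zero nilpotent (after rescaling $e_2$ by $\epsilon^{-k}$), so it is isomorphic to the uniserial $\K[X]/(X^n)$-module $\K[X]/(X^2)$ of $\K$-dimension two, which is indecomposable. Any direct sum decomposition $L_k=M\oplus N$ with $M,N$ both nonzero would, since $L_k$ is $\O$-free, force $M$ and $N$ to be nonzero free $\O$-modules; consequently $M\otimes\K$ and $N\otimes\K$ would both be nonzero, contradicting the indecomposability of $L_k\otimes\K$.

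For distinctness, I would extract an invariant by computing $L_k/XL_k$ as an $\O$-module. Since $XL_k=\epsilon^k\O e_1$, we find $L_k/XL_k\simeq \O/\epsilon^k\O\oplus \O$, whose elementary divisors depend on $k$; thus $L_k\not\simeq L_\ell$ as $A$-modules for distinct $k,\ell\geq 1$. The argument is conceptually straightforward and I do not expect a serious obstacle: the only subtle point is to arrange a one-parameter family whose generic fibre is a fixed indecomposable (which forces each integral member to be indecomposable) while the integral structure varies enough to distinguish the members, and the factor $\epsilon^k$ on the off-diagonal of the matrix for $X$ accomplishes both at once.
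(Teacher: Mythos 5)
Your proof is correct, and the strategy is essentially the one the paper uses for its explicit construction: build a one-parameter family of lattices with a power of $\epsilon$ as the only varying coefficient, deduce indecomposability from the fact that the generic fibre is a fixed indecomposable $\K[X]/(X^n)$-module, and separate the members by an $\O$-module invariant. The differences are cosmetic but worth noting: the paper takes the rank-$n$ sublattices $L_r=\O\epsilon^r\oplus\O X\oplus\cdots\oplus\O X^{n-1}\subseteq A$, whose generic fibre is the regular module $A\otimes\K$, and merely asserts both indecomposability and pairwise non-isomorphism; you instead take a rank-$2$ family whose generic fibre is $\K[X]/(X^2)$, and you actually supply the two missing justifications (direct summands of a free $\O$-module are free, so a nontrivial splitting would descend to the generic fibre; and $L_k/XL_k\simeq\O\oplus\O/\epsilon^k\O$ detects $k$). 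Your rank-$2$ choice is more economical and makes both verifications cleaner, at the cost of not sitting inside $A$ as the paper's family does. The paper also opens with a second, purely abstract argument (finite Cohen--Macaulay type would force $A$ to be reduced, citing Auslander and Yoshino), which you do not reproduce, but the explicit family alone proves the lemma.
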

\begin{proof}
If there were only finitely many, then \cite[\S10]{A2} and \cite[(3.1), (4.22)]{Y} would imply that $A$ is reduced, contradicting our assumption that $n\geq 2$. Below, we give an example of a family of infinitely many pairwise non-isomorphic indecomposable $A$-lattices.

For $r\in \Z_{\geq 0}$, let $L_{r}=\O\epsilon^{r}\oplus \O X\oplus\cdots\oplus \O X^{n-1}\subseteq A$. Then 
the representing matrix of the action of $X$ on $L_{r}$ with respect to the basis is given by the following matrix:
\[
X = \begin{pmatrix}
 0 & \cdots &\cdots  &\cdots  & 0\\
\epsilon^{r} & 0 & \cdots &\cdots & 0 \\
 0 & 1 & 0      &  &   \vdots         \\
\vdots & \ddots & \ddots &\ddots & \vdots \\
 0 & \cdots & 0 & 1&0 \end{pmatrix}
\]
Therefore we have $L_{r}\otimes\K\simeq A\otimes \K$ and $L_{r}\not\simeq L_{s}$ whenever $r\neq s$.
 In particular, $L_{r}$, for $r=0,1,2,\cdots$, are pairwise non-isomorphic indecomposable $A$-lattices.
\end{proof}

Since $\O$ is a complete local ring, $\End _A(X)$ is a local $\O$-algebra for every indecomposable $A$-lattice $X$ \cite[(6.10)(30.5)]{CR}. 
Thus, the Jacobson radical $\Rad\End _A(X)$ consists of all non-invertible endomorphisms of $X$. 
Another consequence is that $A$ is semiperfect and every finitely generated $A$-module has a projective cover \cite[(6.23)]{CR}. 

In the next subsection, we assume that $A$ is a Gorenstein $\O$-order and we explain a method to construct almost split sequences for 
$A$-lattices. Note that there exists an almost split sequence ending (resp. starting) at $M$ if and only if $M\otimes\K$ is projective (resp. injective) 
\cite{AR}, \cite[Thm.6]{RR}. 

\subsection{Construction of almost split sequences}

We recall several definitions. 

\begin{definition} Let $A$ be an $\O$-order, $M$ and $N$ $A$-lattices. 
The \emph{radical} $\Rad \Hom _A(M,N)$ of $\Hom _A(M,N)$ is the $\O$-submodule of $\Hom _A(M,N)$ consisting of 
$f\in \Hom _A(M,N)$ such that, for all indecomposable $A$-lattice $X$, we have
$hfg\in\Rad\End _A(X)$, for any $g\in \Hom_A(X, M)$ and $h\in \Hom_A(N, X)$. 
It is equivalent to the condition that $1-gf$ is invertible, for all $g\in\Hom_A(N,M)$, and 
to the condition that $1-fg$ is invertible, for all $g\in\Hom_A(N,M)$.
\end{definition}

Let $\mathcal{A}$ be an abelian category with enough projectives, $\mathcal{C}$ an additive full subcategory which is closed under 
extensions and direct summands. Then, $f \in \Hom_{\mathcal C}(M,N)$ in $\mathcal{C}$ is called \emph{right minimal in $\mathcal{C}$} if 
an endomorphism $h\in \End_{\mathcal C}(M)$ is an isomorphism whenever $f=fh$, \emph{right almost split in $\mathcal{C}$} 
if it is not a split epimorphism and for each $X\in\mathcal{C}$ and $h\in \Hom_{\mathcal C}(X,N)$ which is not a split epimorphism, there is $s\in \Hom_{\mathcal C}(X,M)$ 
such that $fs=h$. If $f$ is both right minimal in $\mathcal{C}$ and right almost split in $\mathcal{C}$, $f$ is called 
\emph{minimal right almost split in $\mathcal{C}$}. Similarly, $g \in \Hom_{\mathcal C}(L,M)$ is called \emph{left minimal in $\mathcal{C}$} if 
an endomorphism $h\in \End_{\mathcal C}(M)$ is an isomorphism whenever $g=hg$, \emph{left almost split in $\mathcal{C}$}  
if it is not a split monomorphism and for each $Y\in\mathcal{C}$ and $h\in \Hom_{\mathcal C}(L,Y)$ which is not a split monomorphism, there is $t\in \Hom_{\mathcal C}(M,Y)$ 
such that $tg=h$, and if $g$ is both left minimal in $\mathcal{C}$ and left almost split in $\mathcal{C}$, $g$ is called 
\emph{minimal left almost split in $\mathcal{C}$}. We have the following proposition in this general setting \cite[Chap.II, Prop.4.4]{A1}. 

\begin{proposition}\label{equivalent conditions}
Suppose that $\mathcal{C}$ is an additive full subcategory of an abelian category $\mathcal{A}$ with enough projectives such that 
$\mathcal{C}$ is closed under extensions and direct summands. 
Let $L,M,N\in\mathcal{C}$. Then the following are equivalent for a short exact sequence
$$ 0 \longrightarrow L \stackrel{g}{\longrightarrow} M \stackrel{f}{\longrightarrow} N \longrightarrow 0. $$
\begin{itemize}
\item[(a)]
$f$ is right almost split in $\mathcal{C}$ and $g$ is left almost split in $\mathcal{C}$.
\item[(b)]
$f$ is minimal right almost split in $\mathcal{C}$.
\item[(c)]
$f$ is right almost split and $\End_{\mathcal C}(L)$ is local.
\item[(d)]
$g$ is minimal left almost split in $\mathcal{C}$.
\item[(e)]
$g$ is left almost split in $\mathcal{C}$ and $\End_{\mathcal C}(N)$ is local.
\end{itemize}
\end{proposition}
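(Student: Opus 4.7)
The plan is to establish the equivalences through the cycle $(a) \Rightarrow (c) \Rightarrow (b) \Rightarrow (a)$ on the right-hand side, and dually $(a) \Rightarrow (e) \Rightarrow (d) \Rightarrow (a)$ on the left. By the left/right symmetry of the setup, it suffices to prove the three right-sided arrows; the left-sided ones follow by interchanging pushouts with pullbacks and the roles of $L$ and $N$. Throughout, closure of $\mathcal{C}$ under extensions keeps pushout sequences inside $\mathcal{C}$, while closure under summands controls indecomposable decompositions.

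For (a) $\Rightarrow$ (c), I would use the left almost split property of $g$ to show that $\End_\mathcal{C}(L)$ is local. Given $\psi \in \End_\mathcal{C}(L)$, we show that $\psi$ or $1_L - \psi$ is invertible. First, $L$ is indecomposable: if $L = L_1 \oplus L_2$ with both summands nonzero, the two projections onto the summands are not split monomorphisms, so each factors through $g$ by left almost splitness, and the sum of the factorizations exhibits $g$ as a split monomorphism, contradicting the fact that $f$ is right almost split. With $L$ indecomposable, an endomorphism of the lattice $L$ is a split monomorphism iff it is an isomorphism, so if both $\psi$ and $1_L - \psi$ failed to be invertible, each would factor through $g$, again producing a splitting of $g$. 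For (c) $\Rightarrow$ (b), given $h \in \End_\mathcal{C}(M)$ with $fh = f$, the difference $1_M - h$ lands in $\Ker f = g(L)$ and hence equals $g\alpha$ for some $\alpha \colon M \to L$; a diagram chase gives $\alpha g = 1_L - h_L$, where $h_L \in \End_\mathcal{C}(L)$ is determined by $gh_L = hg$. Locality forces $h_L$ or $1_L - h_L = \alpha g$ to be invertible; the latter would make $g$ a split monomorphism, so $h_L$, and hence $h$ by the five lemma, is an isomorphism.

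For (b) $\Rightarrow$ (a), I would verify left almost splitness of $g$. Given $h \colon L \to Y$ in $\mathcal{C}$ not a split monomorphism, form the pushout of $g$ along $h$:
$$\xymatrix{
0 \ar[r] & L \ar[r]^g \ar[d]_h & M \ar[r]^f \ar[d]^{\bar h} & N \ar[r] \ar@{=}[d] & 0 \\
0 \ar[r] & Y \ar[r]^i & E \ar[r]^{\bar f} & N \ar[r] & 0
}$$
with $E \in \mathcal{C}$. If the bottom row splits via $p \colon E \to Y$, then $p\bar h \colon M \to Y$ satisfies $(p\bar h)g = pih = h$. Otherwise $\bar f$ is not a split epimorphism, so right almost splitness supplies $s \colon E \to M$ with $fs = \bar f$; then $f(s\bar h) = f$, and right minimality forces $s\bar h$ to be an automorphism of $M$. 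The resulting idempotent $\bar h (s\bar h)^{-1} s \in \End_\mathcal{C}(E)$ splits off $\bar h(M)$, and its complement yields a retraction of $i$, returning us to the first case.

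The main obstacle is the non-splitting subcase of (b) $\Rightarrow$ (a): the idempotent argument must show that, under the given hypotheses, the pushout sequence in fact splits. Right minimality of $f$ is essential here, which is why (c) and hence (b) must be established first in the cycle. Once all three right-sided arrows are in place, the dual arguments yield the left-sided cycle and complete the proof.
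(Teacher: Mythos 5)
The paper does not prove this proposition; it cites Auslander \cite[Chap.\,II, Prop.\,4.4]{A1} and leaves the argument to that reference, so there is no in-text proof to compare against. Your cycle $(a)\Rightarrow(c)\Rightarrow(b)\Rightarrow(a)$, with the dual cycle for $(d)$ and $(e)$, is the standard route, and the ideas you invoke (indecomposability of $L$ from left almost splitness, locality of $\End_{\mathcal C}(L)$ via ``$\psi$ or $1-\psi$ is a unit,'' right minimality via lifting $1-h$ through $g$ and the five lemma, and the pushout argument for $(b)\Rightarrow(a)$) all match what one finds in Auslander and in ARS. One small terminological slip: you refer to ``the lattice $L$,'' but the proposition is stated for a general abelian category; the claim you need --- a split mono endomorphism of an indecomposable object is an isomorphism --- holds in any additive category, so no Krull--Schmidt hypothesis is required, but you should not phrase it via lattices.

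The genuine problem is the last step of $(b)\Rightarrow(a)$. Writing $s'=(s\bar h)^{-1}s$, so that $s'\bar h=\operatorname{id}_M$ and $e=\bar h s'$ is the idempotent, it is \emph{not} true that $1-e$ yields a retraction of $i$. Since $\Im(1-e)\subseteq\Ker(\bar f)=\Im(i)$, one can write $1-e=iq$ for a unique $q\colon E\to Y$; but $fs'i=\bar f i=0$ forces $s'i=g\gamma$ for a unique $\gamma\colon Y\to L$, and then $qi=1_Y-h\gamma$, which is an idempotent of $\End_{\mathcal C}(Y)$ and is not the identity (indeed $h\gamma\neq 0$ as soon as $L\neq 0$). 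So $q$ is \emph{not} a section of $i$, and you do not ``return to the first case'' in the way claimed. What the computation actually shows is stronger and simpler: from $g\gamma h=s'ih=s'\bar h g=g$ and $g$ mono one gets $\gamma h=\operatorname{id}_L$, so $h$ is a split monomorphism --- contradicting the running hypothesis that $h$ is not. Hence the second case cannot occur at all, the pushout sequence always splits, and the first case gives the desired factorization $h=(p\bar h)g$. Replacing your final sentence with this contradiction argument fixes the proof; the rest of your structure (and the dual half, which uses no ``enough injectives'' since none of the three arrows actually invokes ``enough projectives'') then goes through.
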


We return to $\O$-orders over a complete discrete valuation ring $\O$. Among equivalent conditions in Proposition \ref{equivalent conditions}, 
we choose (c) as the definition of an almost split sequence for lattices over an $\O$-order. 

\begin{definition}
Let $A$ be an $\O$-order, $L,E,M$ $A$-lattices. A short exact sequence
\[ 0 \longrightarrow L \longrightarrow E \xrightarrow{\ p\ } M \longrightarrow 0 \]
is called an \emph{almost split sequence} (of $A$-lattices) ending at $M$  if 
\begin{itemize}
\item[(i)] the epimorphism $p$ does not split,
\item[(ii)] $L$ and $M$ are indecomposable, 
\item[(iii)] the morphism $p: E\to M$ induces the epimorphism 
\[ \Hom _A(X,p):\Hom _A(X,E) \longrightarrow \Rad\Hom _A(X,M), \]
for every indecomposable $A$-lattice $X$.
\end{itemize}
\end{definition}

\begin{definition}
Let $f:M\to N$ be a morphism between $A$-lattices. We say that $f$ is an \emph{irreducible morphism} if 
\begin{itemize}
\item[(i)]
$f$ is neither a split monomorphism nor a split epimorphism,
\item[(ii)]
if there are $g\in \Hom_A(M,L)$ and $h\in \Hom_A(L,N)$ such that $f=hg$, then either $g$ is a split monomorphism or $h$ is a split 
epimorphism.
\end{itemize}
\end{definition}

\begin{lemma}
Let $A$ be an $\O$-order, $L,E,M$ $A$-lattices. We suppose that an almost split sequence for $A$-lattices ending at $M$ exists. Then, 
a short exact sequence
\[ 0 \longrightarrow L \xrightarrow{\ \iota\ } E \xrightarrow{\ p\ } M \longrightarrow 0 \]
is an almost split sequence if and only if $\iota$ and $p$ are irreducible.
\end{lemma}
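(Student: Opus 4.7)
The plan is to prove both implications, invoking Proposition \ref{equivalent conditions} throughout.

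For the forward direction, assume the sequence is almost split. Then $p$ is not a split epimorphism by condition (i) of the definition, and $p$ cannot be a split monomorphism either (otherwise $p$ would be an isomorphism, forcing $L=0$ and the sequence to split). To verify the second condition of irreducibility, I would factor $p=hg$ through an $A$-lattice $X$. If $h$ is a split epimorphism we are done; otherwise, the right almost split property of $p$ lifts $h$ as $h=ps$ for some $s\colon X\to E$, giving $p=psg$. Since our definition of almost split sequence is exactly condition (c) of Proposition \ref{equivalent conditions}, it implies condition (b), so $p$ is right minimal; hence $sg$ is an isomorphism and $g$ is a split monomorphism. The dual argument, using left minimality of $\iota$, shows $\iota$ is irreducible.

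For the converse, let $0\to L'\xrightarrow{\iota'}E'\xrightarrow{p'}M\to 0$ be an almost split sequence, whose existence is part of the hypothesis. Irreducibility of $\iota$ and $p$ forces $L$, $M$, and $E$ to be nonzero, since the zero map $0\to X$ is a split monomorphism and the zero map $X\to 0$ is a split epimorphism. Since $p$ is not split epi, the right almost split property of $p'$ yields $\alpha\colon E\to E'$ with $p'\alpha=p$, and irreducibility of $p$ combined with $p'$ not being split epi forces $\alpha$ to be a split monomorphism. Fix a retraction $\sigma\colon E'\to E$ of $\alpha$ and let $\beta\colon L\to L'$ be induced by $\iota'\beta=\alpha\iota$. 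The snake lemma yields $\Coker\beta\cong\Coker\alpha=:K$, realized as a direct summand of $E'$ via the splitting of $\alpha$. The goal is to prove $K=0$, which will make $\alpha$ an isomorphism and the given sequence isomorphic to the almost split one.

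Rewriting $\iota=(\sigma\iota')\beta$ and applying irreducibility of $\iota$ produces two cases. Case A, in which $\beta$ is a split monomorphism, is immediate: $L'\cong L\oplus K$, so indecomposability of $L'$ combined with $L\neq 0$ forces $K=0$. Case B, where $\sigma\iota'$ is a split epimorphism, is the main obstacle. Here, indecomposability of $L'$ together with $E\neq 0$ upgrade $\sigma\iota'$ to an isomorphism (since the induced splitting $L'\cong\tau(E)\oplus\Ker(\sigma\iota')$ must have one summand zero). Setting $\delta:=\iota'(\sigma\iota')^{-1}$ and $h:=\delta\sigma\colon E'\to E'$, a direct computation gives $h\iota'=\iota'$, and left minimality of $\iota'$ forces $h$ to be an isomorphism. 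However $p'h=p'\iota'(\sigma\iota')^{-1}\sigma=0$, so $p'=(p'h)h^{-1}=0$, contradicting $M\neq 0$. Therefore Case B cannot occur, and Case A completes the proof.
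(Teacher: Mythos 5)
Your proof is correct and, since the paper's own ``proof'' consists of the single sentence ``The arguments in [ARS, Thm.~5.3] and [ARS, Prop.~5.9] work without change in our setting,'' you have in effect supplied the details the paper delegates to the literature. Your forward direction (irreducibility from minimal right/left almost split) is exactly the standard ARS Thm.~5.3 argument transported via Proposition~\ref{equivalent conditions}. For the converse you exploit the irreducibility of \emph{both} $\iota$ and $p$ rather than, say, a length argument (unavailable for lattices), which is the natural adaptation: the split monomorphism $\alpha\colon E\to E'$ and the factorization $\iota=(\sigma\iota')\beta$ give the two cases, and in Case~B the composite $h=\iota'(\sigma\iota')^{-1}\sigma$ together with left minimality of $\iota'$ yields the desired contradiction. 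One typographical slip: in Case~B you write ``$L'\cong\tau(E)\oplus\Ker(\sigma\iota')$''; this should read $L'\cong E\oplus\Ker(\sigma\iota')$ (the Auslander--Reiten translate plays no role there). Apart from that the argument is complete and accurate.
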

\begin{proof}
The arguments in \cite[Thm. 5.3]{ARS} and \cite[Prop. 5.9]{ARS} work without change in our setting.
\end{proof}

\begin{remark}
The definitions of almost split sequences and irreducible morphisms are taken from \cite{R2}, although it is assumed that $A\otimes\K$ is a semisimple algebra there. 
\end{remark}

\begin{definition}
Let $A$ be an $\O$-order. For an indecomposable $A\otimes\kappa$-module $N$, 
we view $N$ as an $A$-module, and take the projective cover $p: P\to N$. We denote $\Ker(p)$ by $Z_N$ and 
direct summands of the $A$-lattice $Z_N$ are called \emph{Heller lattices} of $N$. 
Note that $Z_N$ is uniquely determined up to isomorphism. 
\end{definition}

In the sequel, we consider an indecomposable $A$-lattice $M$ with the property
\begin{quote}\label{asterisk}
($\ast$)\quad $M\otimes \K$ is projective as an $A\otimes \K$-module,
\end{quote}
and show how to construct the almost split sequence ending at $M$. 

\begin{remark}
Heller lattices have the property $(\ast)$. Indeed, for an indecomposable $A\otimes\kappa$-module $N$, $Z_N$ is an $A$-submodule of the projective $A$-module 
$P$, and we have $\epsilon P\subseteq Z_N$. Thus, $Z_N\otimes \K=P\otimes \K$ is projective and so are their direct summands.
\end{remark}

Let $D=\Hom_\O(-,\O)$ and define the \emph{Nakayama functor} for $A$-lattices by 
$$ \nu = D(\Hom_A(-,A))=\Hom_{\O}(\Hom_A(-,A),\O). $$

\begin{lemma}\label{first exact sequence}
Let $M$ be an $A$-lattice, $p:P\to M$ its projective cover. We define 
$$ L=D(\Coker(\Hom _A(p,A))). $$
Then we have the exact sequence of $A$-lattices
$$
0\longrightarrow L \longrightarrow \nu (P) \stackrel{\nu(p)}{\longrightarrow} \nu (M) \longrightarrow 0.
$$
\end{lemma}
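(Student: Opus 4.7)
The plan is to apply a standard long-exact-sequence argument and then dualize. First I would set $K = \Ker(p)$, obtaining a short exact sequence of $A$-lattices $0 \to K \to P \to M \to 0$. Applying the contravariant functor $\Hom_A(-,A)$ and using that $P$ is projective (so $\Ext^1_A(P,A) = 0$), one obtains the four-term exact sequence
$$0 \to \Hom_A(M,A) \xrightarrow{\Hom_A(p,A)} \Hom_A(P,A) \to \Hom_A(K,A) \to \Ext^1_A(M,A) \to 0.$$

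From this I would extract the short exact sequence
$$0 \to \Hom_A(M,A) \to \Hom_A(P,A) \to C \to 0,$$
where $C = \Coker(\Hom_A(p,A))$ is identified with the image of $\Hom_A(P,A) \to \Hom_A(K,A)$. The essential observation is that $\Hom_A(K,A)$ sits inside $\Hom_\O(K,A)$, and the latter is a finitely generated free $\O$-module because both $K$ and $A$ are $A$-lattices (the kernel $K$ of the surjection $p$ between lattices is itself a lattice, as $\O$ is a principal ideal domain). Hence $C$ is a finitely generated torsion-free $\O$-module and so is itself an $A$-lattice. This is the only step that requires some care: without knowing that $C$ is torsion-free, the next step would yield only a four-term exact sequence rather than a short one.

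Finally I would apply $D = \Hom_\O(-,\O)$. Since all three terms in the previous short exact sequence are $A$-lattices, they are projective as $\O$-modules, so $\Ext^1_\O(-,\O)$ vanishes on them and $D$ preserves exactness. Unwinding definitions, this yields exactly
$$0 \to L \to \nu(P) \xrightarrow{\nu(p)} \nu(M) \to 0,$$
with middle arrow $D(\Hom_A(p,A)) = \nu(p)$, as required. There is no substantive obstacle; the argument is formal once one uses the projectivity of $P$ together with the fact that we are working with lattices over a discrete valuation ring.
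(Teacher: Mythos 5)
Your proof is correct and follows essentially the same route as the paper: both identify $\Coker(\Hom_A(p,A))$ with the image of $\Hom_A(P,A)\to\Hom_A(\Ker p, A)$, deduce that it is a free $\O$-module (hence $\Ext^1_\O$-acyclic), and then dualize the resulting short exact sequence $0\to\Hom_A(M,A)\to\Hom_A(P,A)\to\Coker(\Hom_A(p,A))\to 0$. The only cosmetic difference is that you invoke the four-term long exact sequence before extracting the short one, whereas the paper goes directly to the embedding of the cokernel into $\Hom_A(\Ker p, A)$.
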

\begin{proof}
$\Hom _A(\Ker(p),A)$ is an $A$-lattice since $\Ker(p)$ and $A$ are. 
Since the cokernel of $\Hom _A(p,A):\Hom_A(M,A) \to \Hom_A(P,A)$ is an $A$-submodule of $\Hom _A(\Ker(p),A)$, 
$\Coker(\Hom _A(p,A))$ is a free $\O$-module. 
Then, $\Ext_{\O}^1(\Coker(\Hom _A(p,A)),\O)=0$ implies the result. 
\end{proof}

\begin{remark}
If we take a minimal projective presentation $Q\stackrel{q}{\rightarrow}P\stackrel{p}{\rightarrow} M$ of an $A$-lattice $M$, we have the short exact sequence
$$
0 \rightarrow \Coker(\Hom _A(p,A)) \rightarrow \Hom_A(Q,A) \rightarrow \Coker(\Hom _A(q,A))=\operatorname{Tr}(M) \rightarrow 0.
$$
Thus, $L=D(\Coker(\Hom _A(p,A)))$ represents the Auslander-Reiten translate $\tau(M)=D\Omega\operatorname{Tr}(M)$ of the $A$-lattice $M$. 
\end{remark}

Taking a suitable pullback of the exact sequence from Lemma \ref{first exact sequence}, we may construct almost split 
sequences as follows. This generalizes the construction in \cite{T}. We give the proof of Proposition \ref{construction of AR-seq}
in the appendix, for the convenience of the reader. 

The right and left minimality in Proposition \ref{equivalent conditions} implies that 
the almost split sequence ending at $M$ and the almost split sequence starting at $L$ are uniquely determined by $M$ and $L$ respectively, 
up to isomorphism of short exact sequences. Thus, We may define the Auslander-Reiten translate $\tau$ and $\tau^-$ by $\tau(M)=L$ 
and $\tau^-(L)=M$. 

\begin{proposition}\label{construction of AR-seq}
Suppose that $A$ is a Gorenstein $\O$-order, $M$ an indecomosable non-projective $A$-lattice with the property $(\ast)$, and 
let $p: P\rightarrow M$ be its projective cover. 
For $\varphi\in \Hom_A(M, \nu (M))$, we consider the pullback diagram along $\varphi$:

\bigskip
\hspace{20mm}
\begin{xy}
(0,15)*[o]+{0}="01",(20,15)*[o]+{L}="L",(40,15)*[o]+{E}="E", (60,15)*[o]+{M}="M",(80,15)*[o]+{0}="02",
(0,0)*[o]+{0}="03",(20,0)*[o]+{L}="L2",(40,0)*[o]+{\nu(P)}="nP", (60,0)*[o]+{\nu(M)}="nM",(80,0)*[o]+{0}="04",
\ar "01";"L"
\ar "L";"E"
\ar "E";"M"
\ar "M";"02"
\ar "03";"L2"
\ar "L2";"nP"
\ar "nP";"nM"^{\nu(p)}
\ar "nM";"04"
\ar @{-}@<0.5mm>"L";"L2"
\ar @{-}@<-0.5mm>"L";"L2"
\ar "E";"nP"
\ar "M";"nM"^{\varphi}
\end{xy}

\bigskip
\noindent
Then the following (1) and (2) are equivalent.
\begin{itemize}
\item[(1)] The pullback $0\to L \to E \to M \to 0$ is an almost split sequence.
\item[(2)] The following three conditions hold.
\begin{itemize}
\item[(i)] $\varphi$ does not factor through $\nu(p)$. 
\item[(ii)] $L$ is an indecomposable $A$-lattice.
\item[(iii)] For all $f\in \Rad\End _A(M)$, $\varphi f$ factors through $\nu(p)$.
\end{itemize} 
\end{itemize}
\end{proposition}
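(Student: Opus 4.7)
The plan is to prove each direction of the equivalence in turn, with $(1)\Rightarrow(2)$ essentially a restatement of the defining properties of an almost split sequence, and $(2)\Rightarrow(1)$ constituting the substantive content.

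For $(1)\Rightarrow(2)$: standard pullback theory shows that $0\to L\to E\to M\to 0$ splits if and only if $\varphi$ factors through $\nu(p)$, so non-splitness of the almost split sequence yields (i). Condition (ii) is immediate, as the left term of an almost split sequence is indecomposable. For (iii), take $f\in\Rad\End_A(M)$; since $\End_A(M)$ is local ($M$ is indecomposable and $\O$ is complete), $f$ is not an isomorphism and hence not a split epimorphism of the indecomposable module $M$. The right almost split property lifts $f$ to $s:M\to E$ with $(E\to M)\circ s=f$, and composing $s$ with the canonical map $E\to\nu(P)$ produces a morphism $M\to\nu(P)$ whose composition with $\nu(p)$ equals $\varphi f$.

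For $(2)\Rightarrow(1)$: the plan is to verify criterion (c) of Proposition \ref{equivalent conditions}. Non-splitness is (i), and $\End_A(L)$ is local by (ii) together with \cite[(6.10),(30.5)]{CR}. The heart of the argument is to prove that $E\to M$ is right almost split: given any $h:X\to M$ that is not a split epimorphism, we must produce a lift $X\to E$, or equivalently show that $\varphi h$ factors through $\nu(p)$. A routine summand-wise reduction lets us assume $X$ is indecomposable. When $X\cong M$, the morphism $h$ lies in $\Rad\End_A(M)$, so (iii) directly produces a factorization of $\varphi h$ through $\nu(p)$, and the pullback universal property then supplies the lift of $h$ to $E$.

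The main obstacle is the case in which $X$ is indecomposable and $X\not\cong M$. Here I would identify $L$ with the Auslander-Reiten translate $\tau M$ via the remark following Lemma \ref{first exact sequence}, and exploit the fact that the almost split sequence ending at $M$ already exists by \cite{AR} and \cite[Thm.6]{RR}. Conditions (i) and (iii) say that the class $\xi\in\Ext^1_A(M,L)$ of our pullback is a nonzero element annihilated by $\Rad\End_A(M)$ under the right composition action, that is, $\xi$ lies in the socle of $\Ext^1_A(M,L)$ viewed as a right $\End_A(M)$-module. An Auslander-Reiten-formula-type argument shows that this socle is one-dimensional over the residue field of $\End_A(M)$ and is generated by the class of the already-existing almost split sequence; hence $\xi$ agrees with the almost split class up to units in $\End_A(M)$ and $\End_A(L)$, making the pullback sequence isomorphic to the almost split sequence ending at $M$. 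The right almost split property of the latter then supplies the required lift of $h$ in the case $X\not\cong M$ as well.
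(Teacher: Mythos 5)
Your direction $(1)\Rightarrow(2)$ and the reduction of $(2)\Rightarrow(1)$ to the right almost split property via Proposition~\ref{equivalent conditions}(c) agree with the general setup, and the treatment of the case $X\cong M$ via condition (iii) and the pullback universal property is exactly right. The divergence, and the weak point, is in how you handle arbitrary $X$.

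The paper does \emph{not} invoke the prior existence of the almost split sequence ending at $M$, nor does it need the socle of $\Ext^1_A(M,L)$ to be simple. Instead it builds, from the Nakayama functors $\nu, \nu', \nu''$ and the natural transformation $\mu_M$, a natural monomorphism $\Ext^1_A(X,L)\hookrightarrow D''\Hom_A(M,X)$ (Lemma~\ref{exact seq} and the lemma before it). Then, letting $\xi\in\Ext^1_A(M,L)$ be the class of the pullback sequence and $\xi'$ its image in $D''\End_A(M)$, hypothesis (iii) says exactly that $\xi'(\Rad\End_A(M))=0$. For any $A$-lattice $X$ and any $f\in\Rad\Hom_A(X,M)$, naturality gives that the image of $\Ext^1_A(f,L)(\xi)$ in $D''\Hom_A(M,X)$ is $\psi\mapsto\xi'(f\psi)$, and $f\psi\in\Rad\End_A(M)$ forces this to vanish. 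Injectivity of the embedding then gives $\Ext^1_A(f,L)(\xi)=0$, i.e., $f$ lifts through $E$ --- for all $X$ at once, with no case split and no appeal to uniqueness of AR sequences.

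Your alternative --- show $\xi$ generates the socle of $\Ext^1_A(M,L)$, which is simple and already generated by the class of the externally-supplied almost split sequence, hence conclude the two sequences are isomorphic --- is a legitimate strategy, but the load-bearing claim ``an Auslander--Reiten-formula-type argument shows this socle is one-dimensional over the residue field of $\End_A(M)$'' is precisely the nontrivial part, and it is not an off-the-shelf citation in this generality (Gorenstein $\O$-order, $A\otimes\K$ not semisimple, AR duality only available on the subcategory satisfying $(\ast)$). Proving it requires establishing the embedding $\Ext^1_A(M,L)\hookrightarrow D''\End_A(M)$ and identifying its socle --- which is the same functorial machinery the appendix develops. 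Once you have that embedding, however, the paper's route is strictly shorter: you can skip the socle-simplicity and uniqueness-of-AR-sequence steps entirely and deduce the lifting property for every $X$ directly, as above. So I would count the socle assertion as a genuine gap in your write-up; filling it would lead you to essentially reproduce the appendix lemmas, at which point the cleaner conclusion is the one the paper gives.
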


If $A$ is a symmetric $\O$-order, then we have functorial isomorphisms $\nu(X)\simeq X$, for $A$-lattices $X$. 
Hence, we pull back $0\to L\to P \to M\to 0$ along $\varphi\in \End_A(M)$ in this case. Further, the left term $L=\tau(M)$ and the middle term $E$ of 
the almost split sequence satisfy the property $(\ast)$.

\subsection{Translation quivers and tree classes}
In this subsection we recall fundamentals of translation quivers.

\begin{definition}
Let $Q=(Q_0, Q_1)$, where $Q_{0}$ is the set of vertices and $Q_{1}$ is the set of arrows, be a locally finite quiver, that is, 
there are only finitely many incoming and outgoing arrows for each vertex. 
If a map $v:Q_{1}\rightarrow \Z_{\geq 0}\times \Z_{\geq 0}$ is given, 
we call the pair $(Q,v)$ a \emph{valued quiver}. Let $\tau:Q\rightarrow Q$ be a quiver automorphism. Then, 
we call the pair $(Q,\tau)$ a \emph{stable translation quiver} if the following two conditions hold:
\begin{itemize}
\item[(i)] 
$Q$ has no loops and no multiple arrows.
\item[(ii)] 
For each vertex $x\in Q_{0}$, we have
$$ \{y\in Q_{0}\mid \tau x\rightarrow y \text{ in } Q_{1}\}=\{y\in Q_{0}\mid y\rightarrow x \text{ in } Q_{1}\}.$$
\end{itemize}
We call the triple $(Q,v,\tau)$ a \emph{valued stable translation quiver} if $(Q,\tau)$ is a stable translation quiver and if
$v(x\to y)=(a,b)$ then $v(\tau(y)\to x)=(b,a)$.
\end{definition}

\begin{definition}
Let $(Q,\tau)$ be a stable translation quiver and $C$ a full subquiver of $Q$. We call $C$ a \emph{component} of $(Q,\tau)$ if 
\begin{itemize}
\item[(i)]
$C$ is stable under the quiver automorphism $\tau$. 
\item[(ii)]
$C$ is a disjoint union of connected components of the underlying undirected graph. 
\item[(iii)]
There is no proper subquiver of $C$ that satisfies $(i)$ and $(ii)$. 
\end{itemize}
Note that components are also stable translation quivers. 
\end{definition}

\begin{example}
Let $(\Delta,v)$ be a valued quiver without loops and multiple arrows. Then, the set $\Z\times\Delta$ becomes a valued stable translation quiver by defining 
as follows. 
\begin{itemize}
\item
arrows are $(n,x)\to (n,y)$ and $(n-1,y)\to (n,x)$, for $x\to y$ in $\Delta$ and $n\in \Z$.
\item
if $v(x\to y)=(a,b)$, for $x\to y$ in $\Delta$, then 
$$ v((n,x)\to (n,y))=(a,b)\;\; \text{ and }\;\; v((n-1,y)\to (n,x))=(b,a). $$
\item
$\tau((n,x))=(n-1,x)$.
\end{itemize}
We denote the valued stable translation quiver by $\Z\Delta$.
\end{example}

Now we recall Riedtmann's structure theorem \cite[Thm.4.15.6]{B}. For the definition of admissible subgroups, see \cite[Def.4.15.4]{B}. 

\begin{definitiontheorem}
Let $(Q,\tau)$ be a stable translation quiver and $C$ a component of $(Q,\tau)$. Then there is a directed tree $T$ and an admissible 
subgroup $G\subseteq \mathrm{Aut}(\Z T)$ such that $C\simeq \Z T/G$ as a stable translation quiver. Moreover, 
\begin{itemize}
\item[(1)] the underlying undirected graph $\overline{T}$ of $T$ is uniquely determined by $C$.
\item[(2)] $G$ is unique up to conjugation in $\mathrm{Aut}(\Z T)$.
\end{itemize}
The underlying tree $\overline{T}$ is called the \emph{tree class} of $C$.  
\end{definitiontheorem}

\begin{definition}
Let $(\Delta,v)$ be a valued quiver without loops and multiple arrows. 
For $x\to y$ in $\Delta$, we write $v(x\to y)=(d_{xy}, d'_{xy})$. If there is no arrow between $x$ and $y$, we understand that 
$d_{xy}=d'_{xy}=0$. Let $\Q_{>0}$ be the set of positive rational numbers. Let
\[ x^+=\{ y\in \Delta_0\mid x\to y \in \Delta_1\}, \quad x^-=\{y\in \Delta_0\mid y\to x\in \Delta_1\}. \]
\begin{itemize}
\item[(i)]
A \emph{subadditive function} on $(\Delta,v)$ is a $\Q_{>0}$-valued function $f$ on $\Delta_0$ such that 
\[ 2f(x)\ge \sum_{y\in x^-} d_{yx}f(y)+\sum_{y\in x^+} d'_{xy}f(y), \]
for each vertex $x\in \Delta _0$.
\item[(ii)]
A subadditive function is an \emph{additive function} if the equality holds for all $x\in \Delta _0$.
\end{itemize}
\end{definition}

The following lemma is well known. See \cite[Thm.4.5.8]{B}, for example. 

\begin{lemma}
\label{tree class}
Let $(\Delta,v)$ be a valued quiver without loops and multiple arrows, and we assume that the underlying undirected graph 
$\overline{\Delta}$ is connected. 
\begin{itemize}
\item[(1)]
Suppose that $(\Delta,v)$ admits a subadditive function. 
\begin{itemize}
\item[(i)]
If $\Delta$ has a finite number of vertices, then $\overline{\Delta}$ is one of finite or affine Dynkin diagrams.
\item[(ii)]
If $\Delta$ has infinite number of vertices, then $\overline{\Delta}$ is one of infinite Dynkin diagrams
$A_{\infty},\; B_{\infty},\; C_{\infty},\; D_{\infty}$ or $A_{\infty}^{\infty}$. 
\end{itemize}
\item[(2)]
If $(\Delta,v)$ admits a subadditive function which is not additive, then $\overline{\Delta}$ is either a finite Dynkin 
diagram or $A_{\infty}$.
\item[(3)]
$(\Delta,v)$ admits a subadditive function which is unbounded, then $\overline{\Delta}$ is $A_{\infty}$.
\end{itemize}
\end{lemma}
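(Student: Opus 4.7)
The plan is to follow the classical Vinberg / Happel--Preiser--Ringel approach, reformulating subadditivity in matrix-theoretic terms and then invoking spectral / Perron--Frobenius arguments. Introduce the generalized Cartan-like matrix $M$ indexed by the vertices of $\Delta$ with $M_{xx}=2$ and $M_{xy}=-d_{yx}$ for $x\neq y$. Then a $\Q_{>0}$-valued function $f$ is subadditive exactly when $Mf\ge 0$ componentwise, additive when $Mf=0$, and strictly subadditive somewhere when $Mf\ge 0$ with at least one strict coordinate. Since $\overline{\Delta}$ is a valued graph without loops or multiple edges and is connected, one can choose positive weights $w_x$ with $w_x d_{xy}=w_y d_{yx}$ (using the tree-like structure of valued Dynkin diagrams), so up to a diagonal change of basis one can work with a symmetric operator and apply standard spectral theory.

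For (1)(i), when $\Delta$ is finite I would invoke Perron--Frobenius: existence of a positive $f$ with $Mf\ge 0$ forces the spectral radius of the (symmetrized) adjacency operator to be at most $2$. The classification of connected graphs with spectral radius $\le 2$ (Smith's theorem together with the standard valued refinement) then gives exactly the finite and affine Dynkin diagrams. For (1)(ii), I would proceed by exhaustion: any finite connected full subquiver $\Delta'\subseteq\Delta$ inherits a subadditive function, so by (1)(i) it embeds in a finite or affine Dynkin diagram; taking an increasing exhaustion and using that $\overline{\Delta}$ is locally finite and tree-like leaves only the possibilities $A_\infty,\ B_\infty,\ C_\infty,\ D_\infty,\ A_\infty^\infty$.

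For (2), if $f$ is subadditive but not additive, then at some vertex $x_0$ strict inequality holds. In the finite case this pushes the spectral radius strictly below $2$, excluding the affine diagrams and leaving only finite Dynkin. In the infinite case, one propagates the defect along the tree: on each of $B_\infty,\ C_\infty,\ D_\infty,\ A_\infty^\infty$ one checks directly that any strictly subadditive $f$ produces a contradiction with positivity at infinity (the defect compounds along the unbounded arms or across the extra branch/end), leaving only $A_\infty$. For (3), the function $f(n)=n$ on $A_\infty$ with vertices $1,2,\dots$ is additive, positive, and unbounded, giving one direction. Conversely, on any other graph from Lemma \ref{tree class}(1), a subadditive $f$ behaves, along an arm, like a discrete concave sequence; the presence of two ends (as in $A_\infty^\infty$) or of an extra branch/end with a $(1,2)$ or $(2,1)$ valuation (as in $B_\infty,\ C_\infty,\ D_\infty$) yields a uniform bound by combining the Perron eigenvector on finite subdiagrams with a maximum-principle comparison.

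The main obstacle is the infinite part of the argument, specifically (3) and the infinite case of (2): one has to control subadditive functions on unbounded trees without any compactness to fall back on. The cleanest route is to isolate a lemma saying that on a one-sided infinite path with the $A_\infty$ valuation, subadditive sequences are precisely the concave positive sequences, hence can be unbounded, while any extra constraint coming from a second end or a branch point forces a genuine inequality that propagates and bounds $f$; this reduces the whole infinite analysis to an elementary discrete concavity argument, which I expect to be the only nontrivial piece of bookkeeping in the proof.
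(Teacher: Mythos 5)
The paper does not prove this lemma at all; it cites Benson \cite{B}, Thm.\ 4.5.8, so there is no in-paper argument to compare against. Your outline for parts (1) and (2) is the standard Vinberg / Happel--Preiser--Ringel route (symmetrization, Perron--Frobenius, Smith's classification for the finite case, exhaustion by finite full connected subquivers for the infinite case), and is in the right spirit; just note that the ``tree-likeness'' you invoke in (1)(ii) must be \emph{derived}, not assumed: one first shows that no affine diagram can be a proper full connected subquiver (it cannot be extended while retaining a subadditive function), so all finite connected full subquivers are finite Dynkin, hence trees, hence $\overline{\Delta}$ is a tree.

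Part (3) of your argument contains a genuine gap. You write that exhibiting $f(n)=n$ as an unbounded additive function on $A_\infty$ ``gives one direction.'' It does not: to prove that $A_\infty$ admits no bounded subadditive function you must show that \emph{every} subadditive function on $A_\infty$ is unbounded, and producing a single unbounded example is irrelevant. In fact that direction is false as literally stated: the constant function $f\equiv 1$ on $A_\infty$ satisfies $2f(1)=2\ge 1=f(2)$ and $2f(n)=2\ge f(n-1)+f(n+1)=2$ for $n\ge2$, so it is a bounded positive subadditive (indeed subadditive-but-not-additive) function on $A_\infty$. The statement in the literature, and the one Benson proves, is the one-sided implication: \emph{if a connected valued graph admits an unbounded subadditive function, then it is $A_\infty$}; equivalently, on every graph from list (1) other than $A_\infty$, every subadditive function is bounded. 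Your discrete-concavity analysis is exactly the right tool for that corrected direction, and it is also what the application in Lemma~\ref{no loop lemma} actually needs once one notices that a loop at $X_r$ produces a \emph{strict} drop $f(X_r)>f(X_{r+1})$, whence the nonincreasing differences $f(X_{m+1})-f(X_m)$ are bounded above by a negative constant and drive $f$ below zero along the unbounded arm. You should flag the defect in the ``iff'' formulation rather than attempt to prove it, and replace the first half of your (3) by the concavity argument just sketched.
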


\subsection{AR-quivers}

We define the stable Auslander-Reiten quiver for symmetric $\O$-orders as follows.

\begin{definition}
Let $A$ be a symmetric $\O$-order over a complete discrete valuation ring $\O$. The \emph{stable Auslander-Reiten quiver} of $A$ is a valued quiver 
such that 
\begin{itemize}
\item
vertices are isoclasses of non-projective $A$-lattices $M$ such that $M\otimes \K$ is projective. 
\item
valued arrows $M\stackrel{(a,b)}{\rightarrow} N$ for irreducible morphisms $M\to N$, 
where the value $(a,b)$ of the arrow is given as follows. 
\begin{itemize}
\item[(a)]
For a minimal right almost split morphism $f: E\to N$, $M$ appears $a$ times in $E$ as a direct summand. 
\item[(b)]
For a minimal left almost split morphism $g: M\to E$, $N$ appears $b$ times in $E$ as a direct summand.
\end{itemize}
\end{itemize}
A component of the stable Auslander-Reiten quiver is defined in the similar way as the stable translation quiver. 
\end{definition}

\begin{lemma}\label{no loop lemma}
Let $A$ be a symmetric $\O$-order over a complete discrete valuation ring $\O$, and let 
$C$ be a component of the stable Auslander-Reiten quiver of $A$. Assume that $C$ satisfies the following conditions:
\begin{enumerate}[(i)]
\item There exists a $\tau$-periodic indecomposable $A$-lattice in $C$.
\item $C$ has infinitely many vertices.
\end{enumerate}
Then $C\setminus\{\text{loops}\}$ is of the form $\mathbb{Z}A_{\infty}/\langle \tau \rangle$ if $C$ has a loop. In this case, the deleted loops appear only at the endpoint of $C$.
\end{lemma}
\begin{proof}
First, we show that if $X\in C$ has a loop, then $X\simeq \tau X$. Suppose that $X\in C$ has a loop and $X\not\simeq \tau X$. Then  
the almost split sequence ending at $X$ is of the form
\[ 0 \to \tau X\to X ^{\oplus l_1} \oplus E_X \oplus  \tau X^{\oplus l_2}\to X\to 0, \]
where $E_X$ is an $A$-lattice and $l_1, l_2\geq 1$. Then, we have
\[ {\rm rank}(X)+{\rm rank}( \tau X)=l_1{\rm rank}( X)+l_2{\rm rank}( \tau X)+{\rm rank}( E_X), \]
hence ${\rm rank}( E_X)=0$ and $l_1=l_2=1$. However, it follows from \cite[Theorem 1]{M} that the almost split sequence ending at $X$ splits, a contradiction.
Therefore, if $X$ has a loop, then $X$ and $\tau X$ are isomorphic.

As in the proof of \cite[Thm.4.16.2]{B}, we know that all indecomposable $A$-lattices in $C$ are $\tau$-periodic. 
Thus, we may choose $n_X\geq 1$, for each $X\in C$, such that $\tau^{n_X}(X)\simeq X$. Define a $\Q_{>0}$-valued function $f$ on $C$ by
$$
f(X)= \frac{1}{n_X}\sum_{i=0}^{n_X-1}{\rm rank}\;\tau^{i}(X).
$$

$C$ does not have multiple arrows by definition. For each indecomposable $N$, 
there is an irreducible morphism $M\to N$ if and only if there is an irreducible morphism $\tau(N)\to M$ by 
the existence of the almost split sequence $0 \to \tau(N) \to E \to N \to 0$. The condition on valued arrows may also be 
checked. Thus, $C\setminus\{\rm{loops}\}$ is a valued stable translation quiver, and we may apply 
the Riedtmann structure theorem. We write $C\setminus\{{\rm loops}\}=\Z T/G$, for a directed tree $T$ and an admissible subgroup $G$. 
Then $f$ is a $\Q_{>0}$-valued function on $T$. For $X\in T$, one can show that
\[ 2f(X)\geq \sum_{Y\in X^-\cap T} d_{YX}f(Y)+\sum_{Y\in X^+\cap T} d'_{XY}f(Y), \]
which implies that $f$ is a subadditive function on $T$. 

We now suppose that $C$ has a loop. Then, $f$ is not additive. Thus, Lemma \ref{tree class} and our assumption (ii) imply that 
$\overline{T}=A_\infty$. Thus, we may assume without loss of generality that $T$ is a chain of irreducible maps
$$
X_{1}\rightarrow X_{2}\rightarrow\cdots \rightarrow X_{r}\rightarrow \cdots.
$$
We assume that $X_r$ has a loop. If $r>1$ then the almost split sequence starting at $X_r$ is
\[ 0 \longrightarrow X_r \longrightarrow X_r^{\oplus l} \oplus X_{r+1}\oplus X_{r-1} \oplus P \longrightarrow X_r \longrightarrow 0 \]
where $l \geq 1$ and $P$ is a projective $A$-module.  Since $f(X_t)\geq 1$  for all $t\geq 1$, we have
\[ f(X_r) \geq  (2- l)f(X_r) \geq f(X_{r+1}) + f(X_{r-1}) \geq f(X_{r+1})+1. \] 
We show that $f(X_m) \geq  f(X_{m+1})+1$ for $m \geq r$. Suppose that $f(X_{m-1}) \geq
f(X_m)+1$ holds. The same argument as above shows $2f(X_m) \geq f(X_{m-1}) +
f(X_{m+1})$, and the induction hypothesis implies 
\[ 2f(X_m)\geq f(X_{m-1}) + f(X_{m+1}) \geq f(X_m) + f(X_{m+1})+1. \]
Hence $f(X_m) \geq f(X_{m+1})+1$.
Thus, there exists a positive integer $t$ such that $f(X_t)<0$, which contradicts with $f(X_t)\geq 1$. Hence $r=1$, that is, the deleted loops appear only at the endpoint of the homogeneous tube.
\end{proof}

\subsection{No loop theorem}

In this subsection, we show an analogue of Auslander's theorem and use this to show \lq\lq no loop theorem\rq\rq. 

\begin{lemma}
Let $A$ be an $\O$-order, $M$ an indecomposable $A$-lattice. Then, there exists an integer $s$ such that 
$M/\epsilon^k M$ is an indecomposable $A/\epsilon^k A$-module, for all $k\geq s$. 
\end{lemma}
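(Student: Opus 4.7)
The plan is to reduce the lemma to an idempotent-lifting problem in $E := \End_A(M)$. Since $M$ is an indecomposable $A$-lattice over the complete discrete valuation ring $\O$, the ring $E$ is a local $\O$-algebra and therefore has only the trivial idempotents $0$ and $1$. Writing $B_k := \End_{A/\epsilon^k A}(M/\epsilon^k M)$, I would first establish a ring isomorphism $E \cong \varprojlim_k B_k$ using the identification $B_k = \Hom_A(M, M/\epsilon^k M)$ (since $M/\epsilon^k M$ is $\epsilon^k$-annihilated), the $\epsilon$-adic completeness $M = \varprojlim_k M/\epsilon^k M$, and commutativity of $\Hom_A(M, -)$ with inverse limits. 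Consequently, any compatible sequence of idempotents in $\prod_k B_k$ corresponds to an idempotent of $E$, hence to $0$ or $1$.

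Next, I would control the Krull--Schmidt data of $M/\epsilon^k M$. Since $A/\epsilon^k A$ is finitely generated over the complete Artinian local ring $\O/\epsilon^k \O$, Krull--Schmidt applies; let $n_k$ denote the number of indecomposable summands of $M/\epsilon^k M$. Reducing modulo $\epsilon$ and counting $\kappa$-dimensions gives $n_k \leq \operatorname{rank}_\O M$, and reducing a decomposition of $M/\epsilon^{k+1} M$ modulo $\epsilon^k$ and refining yields $n_k \geq n_{k+1}$ (each summand remains nonzero by Nakayama). Thus $(n_k)$ is bounded and non-increasing, hence stabilizes at some value $n$ for $k \geq s$. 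To conclude, I would argue by contradiction: if $n \geq 2$, once stability is reached the reduction $B_{k+1} \to B_k$ sends primitive idempotents to primitive idempotents (otherwise refinement would force $n_k > n_{k+1}$), and one seeks a compatible system of $n$ pairwise orthogonal nonzero primitive idempotents of $B_k$ summing to $1$; passing to the limit in $E$ would yield $n$ pairwise orthogonal nonzero idempotents of $E$ summing to $1$, contradicting locality of $E$ and forcing $n = 1$.

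The main obstacle is the construction of this compatible system, because the reduction $B_{k+1} \to B_k$ is not surjective in general (its cokernel lies in $\Ext^1_A(M, \epsilon^k M/\epsilon^{k+1} M)$), so a given decomposition of $M/\epsilon^k M$ need not lift literally. I would handle this via a K\"onig-type argument on the inverse system of ordered Krull--Schmidt decompositions: the set of isomorphism classes of indecomposable summands at each level is finite (bounded by $\operatorname{rank}_\O M$), and the kernel of $B_{k+1} \to B_k$ consists of nilpotent elements (any $\phi$ vanishing modulo $\epsilon^k$ maps $M/\epsilon^{k+1} M$ into $\epsilon^k M/\epsilon^{k+1} M$, whence $\phi^2 = 0$), so $1$ plus such a nilpotent element is a unit, allowing one to adjust decompositions at successive levels by conjugation to achieve compatibility.
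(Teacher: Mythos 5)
Your reduction to idempotents in $E:=\End_A(M)$ via $E\cong\varprojlim_k B_k$ is correct, as is the stabilization of the number $n_k$ of indecomposable summands. The gap is in the construction of the compatible system of idempotents. Your plan is: take arbitrary decompositions at each level; by Krull--Schmidt the reduction of a level-$(k+1)$ decomposition is conjugate to the chosen level-$k$ decomposition via a unit $\bar u\in B_k$; lift $\bar u$ to a unit $u\in B_{k+1}$ (possible because the kernel of $B_{k+1}\to B_k$ is nilpotent, so lifts of units are units); and conjugate. But this requires $\bar u$ to lie in $\Im(B_{k+1}\to B_k)$ \emph{at all}, and you have already correctly identified that $B_{k+1}\to B_k$ is not surjective --- nilpotency of the kernel does not help with surjectivity of the map itself, only with the fact that elements \emph{already known to lift} remain units. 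So the adjustment step can genuinely fail. Likewise, a K\"onig-type compactness argument would need the sets $S_k$ of ordered decompositions (or some suitable finite quotient) to be finite, which fails for infinite residue field $\kappa$ (e.g.\ two isomorphic summands give a $\kappa^\times$-worth of splittings), and the finiteness of the set of isomorphism classes of summands does not provide the needed compactness for the tuples of idempotents themselves.

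The argument can be repaired within your framework, but by a different mechanism than the one you propose: for each $k$, the images $\tilde B_k^{(j)}:=\Im(B_{k+j}\to B_k)$ form a descending chain of $\O/\epsilon^k\O$-submodules of the finite-length module $B_k$, hence stabilize at some $\tilde B_k$ by the Artinian chain condition; Mittag--Leffler then gives that $E\to\tilde B_k$ is surjective, so $\tilde B_k$ is local; but if $M/\epsilon^{k+j}M$ were decomposable for all $j$, the (nontrivial) image of a nontrivial idempotent of $B_{k+j}$ for $j$ large would land in $\tilde B_k$ (again using nilpotency of $\Ker(B_{k+j}\to B_k)$ to see the image is still nontrivial), a contradiction. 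This avoids ever constructing a compatible system. The paper takes a quite different and more direct route: it proves that $\End_A(M)\to\End_A(M/\epsilon^kM)$ is actually \emph{surjective} for $k$ large, by associating to each $f\in\End_\O(M)$ that is $A$-linear modulo $\epsilon^k$ a derivation $D_f\in\Der(A,\End_\O(M))$, observing that $\Der(A,\End_\O(M))$ is a finitely generated $\O$-module so the submodules $\Der(k)$ stabilize, and deducing $\Im(\End_A(M)\to B_k)+\epsilon B_k=B_k$, whence surjectivity by Nakayama. Surjectivity plus $\epsilon$-adic completeness of $\End_A(M)$ then gives idempotent lifting immediately. In short: your strategy is salvageable via a Mittag--Leffler argument, but not via the conjugation/K\"onig route you sketch; the paper instead proves the stronger surjectivity statement outright.
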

\begin{proof}
An  $\O$-linear map $D:A \rightarrow \End _{\O}(M)$ is called a derivation if 
\[ D(xy)=xD(y)+D(x)y \]
for all $x,y\in A$. We denote by $\Der(A,\End _{\O}(M))$ the $\O$-module of derivations. 
Note that $\Der(A,\End _{\O}(M))$ is an $\O$-order since $A$ and $M$ are.

Let $k$ be a positive integer. For $f\in \End_{\O}(M)$ such that $af(m+\epsilon^k M)=f(am+\epsilon^k M)$, for $a\in A$ and $m\in M$, 
we define $D_f\in \Hom_\O(A,\End_{\O}(M))$ as follows. 
\[ D_{f}(a)(m) = \epsilon ^{-k}(f(am)-af(m)),\;\;\text{for $a\in A$ and $m\in M$.} \]
The following computation shows that $D_{f}$ is a derivation. 
\begin{align*}
D_{f}(xy)(m)&=\epsilon ^{-k}(f(xym)-xy(m)) \\
                     &= \epsilon ^{-k}(xf(ym)-xyf(m))+\epsilon ^{-k}(f(xym)-xf(ym)) \\
                     &= xD_{f}(y)(m)+D_{f}(x)(ym). \end{align*}
Let $\Der(k)$ be the $\O$-submodule of  $\Der(A,\End _{\O}(M))$ which is generated by all such $D_f$, and we define 
$\Der(\infty)=\sum_{k\geq 1}\Der(k)$. Since $\Der(A,\End _{\O}(M))$ is a finitely generated $\O$-module, there exists an integer $s$ such that
\[ \Der(\infty)=\sum _{k=1} ^{s-1}\Der(k). \]
We show that the algebra homomorphism $\End _A(M) \rightarrow \End _A(M/\epsilon ^kM)$ is surjective, for all $k\geq s$. 
Let $\theta \in \End _{A}(M/\epsilon ^kM)$, for $k\geq s$. We fix $f\in \End_\O(M)$ such that 
$$ f(m+\epsilon^kM)=\theta(m+\epsilon^kM), \;\;\text{for $m\in M$}. $$
Then, there exist $c_i\in \O$ and $f_i\in\End_\O(M)$ that satisfy 
$$ f_i(m+\epsilon^{l_i}M)=\theta_i(m+\epsilon^{l_i}M), \;\;\text{for some $1\leq l_i\leq s-1$ and $\theta_i\in \End_A(M/\epsilon^{l_i}M)$},$$
such that $D_{f}=\sum _{i=1}^{N}c_iD_{f_i}$. More explicitly, we have
\[ f(am)-af(m)=\sum _{i=1} ^{N} \epsilon ^{k-l_i}c_i(f_{i}(am)-af_{i}(m)), \;\;\text{for $a\in A$ and $m\in M$.}\]
It implies that $f-\sum _{i=1}^{N}\epsilon ^{k-l_i}c_if_{i}\in\End _A(M)$. Since it coincides with $\theta$ if we reduce modulo $\epsilon$,
we have proved
$$ \Im(\End_A(M)\to \End _A(M/\epsilon ^kM))+\epsilon\End _A(M/\epsilon ^kM)=\End _A(M/\epsilon ^kM). $$
Thus, Nakayama's lemma implies that $\End_A(M)\to \End _A(M/\epsilon ^kM)$ is surjective, and we have 
an isomorphism of algebras $\End_A(M)/\epsilon^k\End_A(M)\simeq\End_A(M/\epsilon^k M)$. 
As $\O$ is a complete local ring, the lifting idempotent argument works \cite[(6.7)]{CR}. Hence,  
if $M/\epsilon ^k M$ is decomposable, so is $M$.
\end{proof}

We recall the Harada-Sai lemma from \cite[VI. Cor.1.3]{ARS}.

\begin{lemma}\label{Harada-Sai}
Let $B$ be an Artin algebra, $\{ N_i \mid 1\leq i\leq 2^m\}$ a collection of indecomposable $B$-modules such that 
the length of composition series of $N_i$ is less than or equal to $m$, for all $i$. 
If none of $f_i\in \Hom_B(N_i,N_{i+1})$ $(1\leq i\leq 2^m-1)$ is an isomorphism, then
$$ f_{2^m-1}\cdots f_1=0. $$
\end{lemma}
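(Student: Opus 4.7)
The plan is to proceed by induction on $m$, which matches the classical Harada--Sai argument.

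For the base case $m=1$: each $N_i$ is simple, so any non-isomorphism between two of them is automatically zero, and in particular $f_1=0$.

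For the inductive step, I would first record the following observation: if $f\colon M\to N$ is a non-isomorphism between indecomposable $B$-modules of length at most $m$, then $\ell(\Im f)\le m-1$. Indeed, if $\ell(\Im f)=m$, then $\ell(M)-\ell(\Ker f)=m$ together with $\ell(\Im f)\le\ell(N)\le m$ forces $\Ker f=0$ and $\Im f=N$, making $f$ an isomorphism. Applied to $f_1$, this already gives $\ell(L_1)\le m-1$ for $L_i := \Im(f_i\cdots f_1)\subseteq N_{i+1}$, and since the $L_i$ form a descending chain (after transporting via the $f_j$), every $L_i$ has length at most $m-1$.

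To invoke the inductive hypothesis, I would split the chain of $2^m-1$ maps into two halves of $2^{m-1}-1$ maps each, separated by a middle map $f_{2^{m-1}}$. Assuming for contradiction that $g:=f_{2^m-1}\cdots f_1\ne 0$, I decompose the middle image $L_{2^{m-1}}$ via Krull--Schmidt into indecomposable summands, each of length at most $m-1$. The restriction of $g$ to at least one such summand $L$ is nonzero. Replacing the original chain by chains of maps $L\to \bar L_2\to\cdots\to \bar L_{2^{m-1}}$, where each $\bar L_j$ is an indecomposable summand of the successive image in $N_{2^{m-1}+j}$ that carries nonzero contribution, I obtain a sequence of $2^{m-1}-1$ non-isomorphisms between indecomposable modules of length at most $m-1$ whose composition is nonzero -- contradicting the inductive hypothesis.

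The hard part will be the bookkeeping in this reduction: since images of maps between indecomposables need not themselves be indecomposable, Krull--Schmidt must be invoked repeatedly, and some care is needed to check that the restrictions produced at each step are genuine non-isomorphisms (rather than, say, split injections onto an indecomposable summand). Once this is carried out cleanly -- as in \cite[VI. Cor.1.3]{ARS}, from which the statement is quoted -- the inductive hypothesis yields $g=0$ and closes the induction.
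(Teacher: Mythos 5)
The paper does not prove this lemma; it cites it verbatim from \cite[VI.~Cor.~1.3]{ARS}, so there is no in-paper argument to compare against. Your sketch, however, contains a genuine gap in the inductive step, and the part you defer to "carrying out cleanly" is precisely where the approach breaks down.

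Your base case and the preliminary observation (a non-isomorphism between indecomposables of length at most $m$ has image of length at most $m-1$) are both correct. The trouble is the reduction. After isolating an indecomposable summand $L$ of $L_{2^{m-1}}$ and trying to form a chain $L\to\bar L_2\to\cdots\to\bar L_{2^{m-1}}$ by taking successive images under the $f_{2^{m-1}+j}$ and projecting onto suitable indecomposable summands, there is no reason the resulting maps are non-isomorphisms. They can genuinely be isomorphisms: if $f_{2^{m-1}+1}$ is an injective non-isomorphism and $\ell(L)<\ell(N_{2^{m-1}+1})$, then $f_{2^{m-1}+1}|_L$ is injective, its image is isomorphic to $L$, and the induced map $L\to\bar L_2$ is an isomorphism. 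Then the inductive hypothesis, which requires a chain of non-isomorphisms, does not apply, and no choice of summands $\bar L_j$ repairs this. So this is not bookkeeping; the scheme of shrinking to images and summands does not by itself produce the non-isomorphism property you need.

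The classical argument avoids the problem by strengthening the induction and by never leaving the original indecomposables $N_i$. One proves by induction on $k$: if $N_1,\dots,N_{2^k}$ are indecomposable of length at most $b$ and $f_i\colon N_i\to N_{i+1}$ are non-isomorphisms, then $\ell\bigl(\Im(f_{2^k-1}\cdots f_1)\bigr)\le b-k$. In the step, set $g=f_{2^{k-1}-1}\cdots f_1$, $f=f_{2^{k-1}}$, $h=f_{2^k-1}\cdots f_{2^{k-1}+1}$, and suppose $\ell(\Im(hfg))\ge b-k+1$. By the inductive hypothesis $\ell(\Im g),\ell(\Im h)\le b-k+1$, so
$$\ell(\Im(hfg))=\ell(\Im g)=\ell(\Im(fg))=\ell(\Im h)=b-k+1.$$
Equality $\ell(\Im(hfg))=\ell(\Im(fg))$ makes $h$ injective on $\Im(fg)$, so $\Im(fg)\cap\Ker h=0$; equality $\Im(hfg)=\Im h$ gives $N_{2^{k-1}+1}=\Im(fg)+\Ker h$. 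Hence $N_{2^{k-1}+1}=\Im(fg)\oplus\Ker h$, and since $N_{2^{k-1}+1}$ is indecomposable and $\Im(fg)\ne 0$ we get $\Ker h=0$ and $\Im(fg)=N_{2^{k-1}+1}$, so $f$ is surjective. The symmetric argument with $g$ in place of $fg$ gives $N_{2^{k-1}}=\Im g\oplus\Ker f$, forcing $\Ker f=0$. Then $f$ is an isomorphism, a contradiction. Taking $k=b=m$ yields the lemma. The two ingredients you are missing are the quantitative bound on image length as the inductive hypothesis, and the direct-sum decomposition of the \emph{original} middle module $N_{2^{k-1}+1}$ (rather than decompositions of images), which is what lets indecomposability enter.
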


\begin{proposition}\label{lattice Gabriel}
Let $A$ be a symmetric $\O$-order over a complete discrete valuation ring $\O$, and assume that $A$ is indecomposable as an $\O$-algebra. 
Let $C$ be a component of the stable Auslander-Reiten quiver of $A$. 
Assume that the number of vertices in $C$ is finite. Then $C$ exhausts all non-projective indecomposable $A$-lattices. 
\end{proposition}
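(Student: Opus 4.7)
The plan is to adapt Auslander's classical theorem that finite Auslander-Reiten components force finite representation type, using the Harada-Sai lemma together with the finite-length reductions $X/\epsilon^k X$ furnished by the preceding lemma.

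Suppose for contradiction that $M$ is an indecomposable non-projective $A$-lattice with $M\otimes\K$ projective and $M\notin C$. Since $C$ is finite, by the preceding lemma I may fix a single $k\geq 1$ such that $M/\epsilon^k M$ and every $N/\epsilon^k N$ with $N\in C$ are indecomposable $A/\epsilon^k A$-modules. Let $m$ be an upper bound on the composition lengths of these finite-length modules.

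The key step is to prove $\Hom_A(N,M)=0$ and $\Hom_A(M,N)=0$ for every $N\in C$. Fix $f\colon N\to M$. Because $M\not\simeq N$, $f$ is not a split monomorphism and factors through the minimal left almost split morphism $g\colon N\to E_N$ of the almost split sequence starting at $N$. Since $C$ is a component of the stable Auslander-Reiten quiver, every indecomposable summand of $E_N$ again lies in $C$. Repeating this factorization on each component whose source lies in $C$, for any $n\geq 1$ I express $f$ as a sum of terms each factoring through a composition of $n$ irreducible morphisms between indecomposables of $C$. Reducing modulo $\epsilon^k$ and taking $n=2^m$, Lemma \ref{Harada-Sai} forces the reductions of these compositions to vanish in $\Hom_{A/\epsilon^k A}(N/\epsilon^k N, M/\epsilon^k M)$, whence $f(N)\subseteq \epsilon^k M$. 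As $\bigcap_k \epsilon^k M = 0$, this gives $f=0$. The dual argument, using minimal right almost split morphisms ending at $N$, yields $\Hom_A(M,N)=0$.

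Finally, indecomposability of $A$ produces the contradiction. Decomposing $A=\bigoplus_i Ae_i$ into indecomposable projective summands, I partition the primitive idempotents $e_i$ according to whether $Ae_i$ is linked by a chain of nonzero morphisms through indecomposable $A$-lattices to $C$ or to $M$. The two-sided Hom-vanishing above, propagated through projective covers and the available almost split sequences, implies that these two families of primitive idempotents are mutually orthogonal in $A$. This yields a nontrivial central idempotent decomposition of $A$, contradicting the indecomposability of $A$ as an $\O$-algebra. The main obstacle is the Hom-vanishing step: one must verify carefully that the irreducible morphisms between indecomposable lattices in $C$ reduce to radical (in particular, non-isomorphism) morphisms modulo $\epsilon^k A$, so that Harada-Sai genuinely applies. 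This relies on the compatibility between the reduction map on endomorphism rings established in the preceding lemma and Jacobson radicals, combined with the uniform choice of $k$ making every $N/\epsilon^k N$ indecomposable of bounded length.
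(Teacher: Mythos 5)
Your overall strategy matches the paper's: use the Harada-Sai lemma on the Artinian quotients $A/\epsilon^k A$ together with the surjectivity $\End_A(X)\twoheadrightarrow\End_A(X/\epsilon^k X)$ from the preceding lemma to obtain the Hom-vanishing, then invoke indecomposability of $A$ via primitive idempotents to derive a contradiction. However, there is a genuine gap in your Hom-vanishing step. You assert that ``every indecomposable summand of $E_N$ again lies in $C$,'' but $C$ is a component of the \emph{stable} Auslander--Reiten quiver, which excludes projective lattices, and the middle term of an almost split sequence can certainly contain indecomposable projective direct summands. This actually occurs in the present paper: Proposition \ref{E-indec}(1) shows that $A$ itself is a direct summand of $E_1$, the middle term of the almost split sequence around $Z_1$ and $Z_{n-1}$. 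Without handling this case, the iteration you describe simply stops when it hits a projective, and the argument does not close. The paper deals with it by passing to the Auslander--Reiten quiver with projectives adjoined and, whenever a projective $N_i$ appears, replacing $N_i$ with $\Rad N_i$ (the Heller lattice of $N_i/\Rad N_i$): a morphism into a projective $N_i$ that is not a split epimorphism factors through $\Rad N_i\hookrightarrow N_i$, and $\Rad N_i$ is a non-projective lattice satisfying $(\ast)$, so the chain stays inside the (extended) component. You need an analogous device.

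Two smaller points. First, after applying Harada--Sai for a fixed $k$ you conclude $f(N)\subseteq\epsilon^k M$ and then cite $\bigcap_k\epsilon^k M=0$; but as written you have the inclusion only for the one chosen $k$. Either run the argument for every sufficiently large $k$ (noting that the Harada--Sai length bound $m$ depends on $k$), or observe that $f(N)\subseteq\epsilon^k M$ means $\Hom_A(N,M)=\epsilon^k\Hom_A(N,M)$ and finish by Nakayama's lemma, as the paper does. Second, your final paragraph on primitive idempotents is morally correct but needs to be tightened: the Hom-vanishing must be established for \emph{every} indecomposable lattice outside $C$ (not just the initially fixed $M$), so that it can be applied to projective summands of $A$; then $e_QAe_R=0=e_RAe_Q$ whenever $Q\notin C$ and $R\in C$ are projective, giving the nontrivial central decomposition. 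The paper makes this explicit by first showing that a direct summand $P$ of the projective cover of $N\in C$ lies in $C$, then that any direct summand $Q$ of the projective cover of $M$ must lie in $C$ by indecomposability of $A$, and finally that $\Hom_A(Q,M)\neq 0$ forces $M\in C$, a contradiction.
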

\begin{proof}
We add indecomposable projective $A$-lattices to the stable Auslander-Reiten quiver of $A$ to obtain the Auslander-Reiten quiver of $A$. 
We show that if $C$ is a finite component of the Auslander-Reiten quiver then $C$ exhausts all indecomposable $A$-lattices. 
Assume that $M$ is an indecomposable $A$-lattice which does not belong to $C$. It suffices to show 
\[ \Hom _A(M,N) =0 = \Hom _A(N,M), \;\; \text{for all $N\in C$}. \]
To see that it is sufficient, let $P$ be a direct summand of the projective cover of $N\in C$.  
Then, $P\in C$ by $N \in C$ and $\Hom _A(P,N)\neq 0$. As $A$ is indecomposable as an algebra, there is no indecomposable projective $A$-lattice $Q$
with the property that 
$$ \Hom _A(Q,R) =0 = \Hom _A(R,Q), $$
for all indecomposable projective $A$-lattices $R\in C$. It implies that any direct summand $Q$ 
of the projective cover of $M$ belongs to $C$. Then $\Hom_A(Q,M)\neq 0$ implies that $M\in C$, which contradicts our assumption. Thus, 
$C$ exhausts all indecomposable $A$-lattices.

Assume that there exists a nonzero morphism $f\in \Hom _A(M,N)$. As $M\not\in C$ and $N\in C$, $f$ is not a split epimorphism. 
We consider the almost split sequence of $A$-lattices 
ending at $N$, and we denote by $N_1,\dots, N_r$ the indecomposable direct summands of the middle term of the almost split sequence. Let 
$$ g_i ^{(1)}:N_i \longrightarrow N $$
be irreducible morphisms. Then, there exist $f_i\in \Hom_A(M, N_i)$ such that 
$$ f=\sum_{i=1}^r g_i ^{(1)} f_i. $$
If $N_i$ is non-projective, we apply the same procedure to $f_i$. If $N_i$ is projective, $f_i$ factors through the Heller lattice $\Rad N_i$ of the 
irreducible $A\otimes\kappa$-module $N_i/\Rad(N_i)$. Thus, we apply the procedure after we replace $N_i$ with $\Rad N_i$. 
After repeating $n$ times, we obtain,  
\[ f=\sum g_i ^{(1)}\cdots g_i^{(n)} h_i, \]
such that $g_i^{(j)}$ are morphisms among indecomposable $A$-lattices in $C$, $h_i$ are morphisms $M \to X_i$, 
where $X_i$ are indecomposable $A$-lattices in $C$ and they are not isomorphisms. 

Since the number of vertices in $C$ is finite, there exists an integer $s$ such that $X/\epsilon ^sX$ is indecomposable, for all $X\in C$. 
Let $m$ be the maximal length of $A/\epsilon^s A$-modules $X/\epsilon ^sX$, for $X\in C$. 
Applying Lemma \ref{Harada-Sai} to the Artin algebra $A/\epsilon ^sA$ with $n=2^m-1$, we obtain
\[ \Hom _A(M,N)=\epsilon ^s\Hom _A(M,N), \]
and Nalayama's Lemma implies $\Hom _A(M,N)=0$. The proof of $\Hom _A(N,M)=0$ is similar. We start with a nonzero morphism $f\in \Hom _A(N,M)$ and consider 
the almost split sequence of $A$-lattices starting at $N$. Let $N_1,\dots, N_r$ be the indecomposable direct summands of the middle term of the almost split sequence as above, and let
$$ g_i ^{(1)}:N \longrightarrow N_i $$
be irreducible morphisms. If $N_i$ is projective, then we replace $N_i$ with $\Rad N_i$. Then, after repeating the procedure $n$ times, we obtain
\[ f=\sum h_i g_i ^{(n)}\cdots g_i^{(1)}, \]
where $h_i$ are morphisms from indecomposable $A$-lattices in $C$ to $M$. Then, we may deduce  $\Hom _A(N,M)=0$ by the Harada-Sai lemma and Nakayama's lemma as before.
\end{proof}

\begin{theorem}\label{infinite tree class theorem}
Let $A$ be a symmetric $\mathcal{O}$-order over a complete discrete valuation ring $\mathcal{O}$, and let $C$ be a component of the stable Auslander-Reiten quiver of $A$. Suppose that
\begin{enumerate}[(i)]
\item there exists a $\tau$-periodic indecomposable $A$-lattice in $C$,
\item the stable Auslander-Reiten quiver of $A$ has infinitely many vertices.
\end{enumerate}
Then, the number of vertices of $C$ is infinite, and either 
\begin{enumerate}[(a)]
\item $C$ is a valued stable translation quiver, or
\item $C\setminus\{\text{loops}\}=\mathbb{Z}A_{\infty}/\langle \tau\rangle$ and the deleted loops appear only at the endpoint of $C$.
\end{enumerate}
\end{theorem}
\begin{proof}
As in the proof of Lemma \ref{no loop lemma},  $C$ admits a subadditive function by the condition (i). 
Hence, the tree class of the valued stable translation quiver $C\setminus\{{\rm loops}\}$ is one of finite, affine or infinite Dynkin diagrams. 
In the first two cases, the number of vertices in $C$ is finite, since all vertices in $C$ are $\tau$-periodic. 
Then we may apply Proposition \ref{lattice Gabriel} and it contradicts the condition (ii). Thus, the tree class is one of infinite Dynkin diagrams and 
the number of vertices in $C$ is infinite. Now the assertion follows from Lemma \ref{no loop lemma}.
\end{proof}

\section{The case $A=\O[X]/(X^n)$} 

\subsection{Heller lattices}

Let $M_{i}=\kappa[X]/(X^{n-i})$, for $1\le i\le n-1$. They form a complete set of isoclasses of non-projective indecomposable 
$A\otimes\kappa$-modules. We realize $M_{i}$ as the $A\otimes\kappa$-submodule $X^iA+\epsilon A/\epsilon A$ of $A\otimes\kappa=A/\epsilon A$. 
We view $M_{i}$ as an $A$-module. 
Then, $p:A\twoheadrightarrow M_{i}$ defined by $f\mapsto X^{i}f+\epsilon A$ is the projective cover of $M_{i}$. 
Therefore, the Heller lattice $Z_{i}$ of $M_{i}$, which is an $A$-submodule of $A$, is given as follows:
$$
Z_{i}=\O\epsilon\oplus\O\epsilon X\oplus\cdots\O\epsilon X^{n-i-1}\oplus\O X^{n-i}\oplus\O X^{n-i+1}\oplus\cdots\oplus \O X^{n-1}.
$$
Then the representing matrix of the action of $X$ on $Z_{i}$ with respect to the above basis is given by the following matrix:
\[
X = \bordermatrix{
 &   &        &       & {n-i} & & & \cr
 & 0 & \cdots &\cdots  &\vdots  &\cdots  &\cdots &\cdots  & 0\cr
 & 1 & \ddots &        &\vdots  &        &       &        & \vdots\cr
 &   & \ddots & 0      &\vdots  &        &       &        &       \cr
 &   &  & 1 &0 & & & & \vdots\cr
{n-i+1}&\cdots & \cdots & \cdots & \epsilon&0 & & &\vdots\cr
 & & & & & 1&0 & & \cr
 & \vdots & & & & &\ddots &\ddots &\vdots\cr
 & 0 & \cdots & \cdots & & & & 1  & 0\cr
}
\vspace*{6mm}
\]
Thus, $\Endhom_{A}(Z_{i})\simeq \{M\in \mathrm{Mat}(n,\O)\mid MX=XM\}$ is a local $\O$-algebra, since the right hand side is contained in 
\[
\left\{
\bordermatrix{
&  &  & & \cr
 & a & 0&\cdots & 0 \cr
 & & \ddots & \ddots&\vdots \cr
 &  &\ast  & \ddots &0 \cr
 & & & &a\cr
},
\quad a\in \O\right\}.
\]
It follows the next lemma. Note that $\rho\in \Endhom_{A}(Z_{i})$ is determined by $\rho(\epsilon)\in Z_{i}$. 

\begin{lemma}\label{z-indec}
We have the following.
\begin{itemize}
\item[(1)]
The Heller lattices $Z_{i}$ are pairwise non-isomorphic indecomposable $A$-lattices.
\item[(2)]
If $\rho\in \Rad\Endhom_{A}(Z_{i})$ then $\rho(\epsilon)$ has the form 
\[\rho(\epsilon)=a_{0}\epsilon+\cdots +a_{n-i-1}\epsilon X^{n-i-1}+a_{n-i}X^{n-i}+\cdots + a_{n-1} X^{n-1},\]
where $a_i\in\O$, for $1\le i\le n-1$, and $a_{0}\in \epsilon\O$.
\end{itemize}
\end{lemma}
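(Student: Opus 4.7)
The plan is to extract both parts of the lemma from the matrix description $\End_A(Z_i)\simeq\{M\in\mathrm{Mat}(n,\O):MX=XM\}$ already given just above. That paragraph embeds $\End_A(Z_i)$ inside the set of lower-triangular matrices with constant diagonal entry $a\in\O$, and such a matrix is invertible iff $a$ is a unit, so the non-units form the unique maximal ideal. Hence $\End_A(Z_i)$ is local, which both proves indecomposability in (1) and immediately yields (2): the diagonal value $a$ equals the coefficient of $e_1=\epsilon$ in $\rho(e_1)=\rho(\epsilon)$, namely $a_0$, so the condition $\rho\in\Rad\End_A(Z_i)$ is precisely $a_0\in\epsilon\O$, while the other coefficients $a_1,\dots,a_{n-1}\in\O$ are automatic from $\rho(\epsilon)\in Z_i$.

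For the pairwise non-isomorphism in (1), I would first reduce modulo $\epsilon$: the matrix of $X$ on $Z_i\otimes\kappa$ is the displayed matrix with the entry $\epsilon$ replaced by $0$, which splits into two Jordan blocks of sizes $n-i$ and $i$. Hence $Z_i\otimes\kappa\simeq\kappa[X]/(X^{n-i})\oplus\kappa[X]/(X^i)$ as $\kappa[X]/(X^n)$-modules, so $Z_i\simeq Z_j$ forces the unordered pair $\{i,n-i\}=\{j,n-j\}$. It remains to rule out $Z_i\simeq Z_{n-i}$ for $i\neq n-i$, and by symmetry we may take $i<n-i$.

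Suppose for contradiction that $\phi\colon Z_i\to Z_{n-i}$ is an isomorphism. Let $f_1,\dots,f_n$ be the analogous $\O$-basis of $Z_{n-i}$, and expand $\phi(\epsilon)=\sum_m c_m f_m$; by the same reasoning as in the paragraph preceding the lemma, $\phi$ is determined by $\phi(\epsilon)$. Since any $A\otimes\kappa$-module isomorphism must carry the length-$(n-i)$ Jordan block of $Z_i\otimes\kappa$ (generated by $\bar{e}_1$) onto the corresponding block $\langle\bar{f}_{i+1},\dots,\bar{f}_n\rangle$ of $Z_{n-i}\otimes\kappa$, we get $c_m\in\epsilon\O$ for every $m\le i$. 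Using $X^{n-i}f_m=\epsilon X^{n-i+m-1}$ for $m\le i$ and $X^{n-i}f_m=0$ for $m\ge i+1$, a direct computation yields $X^{n-i}\phi(\epsilon)=\sum_{m=1}^{i}c_m\epsilon X^{n-i+m-1}\in\epsilon^2 Z_{n-i}$ (the assumption $i<n-i$ is used to see that each $X^{n-i+m-1}$ lies in $Z_{n-i}$). From $X^{n-i}\phi(\epsilon)=\epsilon\phi(e_{n-i+1})$ and the $\O$-torsion-freeness of $Z_{n-i}$ we deduce $\phi(e_{n-i+1})\in\epsilon Z_{n-i}$, so $\bar{\phi}(\bar{e}_{n-i+1})=0$. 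This contradicts the fact that $\bar{e}_{n-i+1}$ generates the length-$i$ Jordan block of $Z_i\otimes\kappa$ and so must map to a nonzero generator under an isomorphism. Tracking this basis computation carefully is the main (though routine) step.
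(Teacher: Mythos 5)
Your handling of indecomposability (locality of $\End_A(Z_i)$ read off from the lower-triangular matrix form) and of part (2) (the diagonal entry $a_0$ decides invertibility) is correct and is exactly the content of the paper's ``it follows the next lemma.'' The paper in fact offers no separate justification for the pairwise non-isomorphism, so attempting to supply one is reasonable, and your first step is fine: reducing mod $\epsilon$ gives $Z_i\otimes\kappa\simeq\kappa[X]/(X^{n-i})\oplus\kappa[X]/(X^i)$, whence $Z_i\simeq Z_j$ forces $\{i,n-i\}=\{j,n-j\}$.

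The step ruling out $Z_i\simeq Z_{n-i}$ for $i<n-i$ has a genuine gap. You assert that an $A\otimes\kappa$-isomorphism $\bar\phi$ must carry the chosen block $\langle\bar e_1,\dots,\bar e_{n-i}\rangle$ onto the chosen block $\langle\bar f_{i+1},\dots,\bar f_n\rangle$, and deduce $c_m\in\epsilon\O$ for $m\le i$. But Krull--Schmidt fixes indecomposable summands only up to isomorphism, not as particular subspaces: for instance $\bar f_1+\bar f_{i+1}$ also generates a length-$(n-i)$ summand complementary to $\langle\bar f_1,\dots,\bar f_i\rangle$, so nothing forbids $\bar\phi(\bar e_1)$ from having a unit coefficient on $\bar f_1$. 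Worse, your own formula $\phi(e_{n-i+1})=\sum_{m\le i}c_m f_{n-i+m}$ shows that if $\phi$ really were an isomorphism then the $c_m$ ($m\le i$) could \emph{not} all lie in $\epsilon\O$ (otherwise $\bar\phi(\bar e_{n-i+1})=0$), so the assertion you are trying to derive from the isomorphism hypothesis is actually incompatible with it, and the argument does not close. What is missing is an essential use of surjectivity, i.e.\ integrality of $\phi^{-1}$ as well as of $\phi$. One clean way to finish: since $Z_i\otimes\K=A\otimes\K$, any $A$-homomorphism $\phi\colon Z_i\to Z_{n-i}$ becomes, after $\otimes\K$, multiplication by some $u\in\K[X]/(X^n)$. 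If $uZ_i=Z_{n-i}$, then comparing $\O$-lattice indices inside $A\otimes\K$ (both lattices contain $\epsilon A$ with $\ell_\O(Z_i/\epsilon A)=i$, $\ell_\O(Z_{n-i}/\epsilon A)=n-i$) gives $n\cdot v(u_0)=[Z_{n-i}:Z_i]=n-2i$, where $u_0\in\K$ is the constant term of $u$. For $1\le i\le n-1$ with $2i\neq n$ this forces a non-integer value of $v(u_0)$, a contradiction.
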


\bigskip

We now consider the following pullback diagram:

\bigskip
\hspace{20mm}
\begin{xy}
(0,10) *{0}="A1", (15,10) *{Z_{n-i}}="B1", (30,10) *{E_i}="C1", (45,10) *{Z_i}="D1", (60,10) *{0}="E1",
(0,0) *{0}="A2", (15,0) *{Z_{n-i}}="B2", (30,0) *{A\oplus A}="C2", (45,0) *{Z_i}="D2", (60,0) *{0}="E2",

\ar "A1";"B1"
\ar "B1";"C1"
\ar "C1";"D1"
\ar "D1";"E1"

\ar "A2";"B2"
\ar "B2";"C2"^\iota
\ar "C2";"D2"^{\;\;\pi}
\ar "D2";"E2"

\ar @{=} "B1";"B2"
\ar "C1";"C2"
\ar "D1";"D2"^{\phi}
\end{xy}

\bigskip
\noindent
where $\phi$ is defined by $\phi(\epsilon)=X^{n-1}$ and 
$$
\phi(\epsilon X)=\cdots=\phi(\epsilon X^{n-i-1})=\phi(X^{n-i})=\cdots=\phi(X^{n-1})=0,
$$
$\pi(f,g)=X^{n-i}f-\epsilon g$, for $(f,g)\in A\oplus A$, and $\iota$ is given as follows. 
$$
\begin{array}{rlll}
\iota(\epsilon X^j)&=& (\epsilon X^j, X^{n-i+j}) &\text{ if } 0\le j\le i-1, \\
\iota(X^j)&=& (X^j,0) &\text{ if } i\le j\le n-1.
\end{array}
$$

\begin{remark}
Using the exact sequences
$$
0\to Z_{n-i}\to A\oplus A\to Z_i\to 0 \quad \text{and} \quad 0\to Z_{n-1}\to A\to \kappa\to 0,
$$
one computes 
$$
\Ext_A^i(\kappa, A)=\begin{cases} \kappa \quad &\text{if $i=1$},\\ 
                                              0         & \text{otherwise.}\end{cases}
$$
\end{remark}

\begin{lemma}
\label{z-ass} 
We have the following. 
\begin{itemize}
\item[(1)] $\phi$ does not factor through $\pi$.
\item[(2)] For any $\rho\in \Rad\Endhom_{A}(Z_{i})$, $\phi\rho$ factors through $\pi$.
\end{itemize}
\end{lemma}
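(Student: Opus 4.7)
The plan is to prove both parts by direct computation with the $\O$-basis $\epsilon, \epsilon X,\ldots, \epsilon X^{n-i-1}, X^{n-i},\ldots, X^{n-1}$ of $Z_i$, exploiting the identity $X\cdot \epsilon X^{n-i-1}=\epsilon\cdot X^{n-i}$ inside $Z_i$. This identity encodes the failure of $Z_i$ to be $A$-generated by $\epsilon$ alone and will force a divisibility constraint on any $A$-linear map out of $Z_i$.

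For (1), I would assume for a contradiction that $\phi = \pi\psi$ for some $A$-linear $\psi\colon Z_i \to A\oplus A$, and write $\psi(\epsilon)=(f,g)$ with $f=\sum_{k=0}^{n-1} f_k X^k$. $A$-linearity gives $\psi(\epsilon X^{n-i-1})=(X^{n-i-1}f,\, X^{n-i-1}g)$; multiplying by $X$ and using $X\cdot \epsilon X^{n-i-1}=\epsilon\cdot X^{n-i}$ yields $\epsilon\,\psi(X^{n-i})=(X^{n-i}f,\,X^{n-i}g)$. Since $A$ is $\epsilon$-torsion free, this forces $X^{n-i}f\in\epsilon A$, i.e.\ $f_k\in\epsilon\O$ for $0\le k\le i-1$. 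But then the coefficient of $X^{n-1}$ in $\pi(\psi(\epsilon))=X^{n-i}f-\epsilon g$ equals $f_{i-1}-\epsilon g_{n-1}\in\epsilon\O$, contradicting $\phi(\epsilon)=X^{n-1}$, whose coefficient of $X^{n-1}$ is~$1$.

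For (2), Lemma~\ref{z-indec}(2) lets me write $\rho(\epsilon)=a_0\epsilon+\sum_{j=1}^{n-i-1}a_j\epsilon X^j+\sum_{j=n-i}^{n-1}a_j X^j$ with $a_0=\epsilon b_0$, $b_0\in\O$. A direct check on the basis gives $\phi\rho(\epsilon)=a_0 X^{n-1}$ while $\phi\rho$ vanishes on every other basis vector: for $k\ge 1$, $\rho(\epsilon X^k)=X^k\rho(\epsilon)$ has no constant term in $A$, and the relation $\epsilon\rho(X^{n-i})=X^{n-i}\rho(\epsilon)$ together with $\epsilon$-torsion freeness of $A$ shows the constant term of $\rho(X^{n-i})$ vanishes, hence so does that of $\rho(X^{n-i+j})=X^j\rho(X^{n-i})$; since $\phi$ reads off (up to division by $\epsilon$) exactly the constant term, the claim follows. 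I then define $\psi\colon Z_i\to A\oplus A$ by $\psi(\epsilon)=(0,-b_0 X^{n-1})$ and $\psi(X^{n-i})=0$, extended $A$-linearly; the only compatibility to verify, $X^{n-i}\psi(\epsilon)=\epsilon\psi(X^{n-i})$, becomes $0=0$ (as $X^{n-i}\cdot X^{n-1}=0$ in $A$), and $\pi\psi=\phi\rho$ is immediate from $\pi(0,-b_0X^{n-1})=\epsilon b_0 X^{n-1}=a_0 X^{n-1}$.

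The main obstacle is the bookkeeping in (1): because $Z_i$ is not $A$-generated by $\epsilon$ alone, a hypothetical lift $\psi$ must reconcile $\psi(\epsilon)$ with $\psi(X^{n-i})$, and it is precisely this reconciliation that produces the divisibility $f_{i-1}\in\epsilon\O$ which kills the factorisation. Once this observation is made, constructing $\psi$ in (2) and checking $\pi\psi=\phi\rho$ are routine.
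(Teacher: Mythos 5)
Your proof is correct and follows essentially the same approach as the paper: both part (1) and part (2) hinge on the relation $X\cdot(\epsilon X^{n-i-1})=\epsilon\cdot X^{n-i}$ in $Z_i$, which in (1) forces $\pi\mu(\epsilon)\in\epsilon A$ (contradicting $\phi(\epsilon)=X^{n-1}$), and in (2) governs the lift $\psi$. You supply more detail than the paper, in particular checking well-definedness of $\psi$ via the compatibility $X^{n-i}\psi(\epsilon)=\epsilon\psi(X^{n-i})$ and verifying $\phi\rho(X^{n-i})=0$, steps the paper subsumes under ``easy to check.''
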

\begin{proof}
(1)\;Suppose that there is a morphism $\mu=(\mu_{1},\mu_{2}):Z_{i}\rightarrow  A\oplus A$ such that 
$\pi\mu=\phi$. Then we have $X^{n-i}\mu_{1}(\epsilon)-\epsilon\mu_{2}(\epsilon)=\epsilon(\mu_{1}(X^{n-i})-\mu_{2}(\epsilon))=X^{n-1}$.
This is a contradiction.

(2)\;Write $\rho(\epsilon)=a_{0}\epsilon+\cdots + a_{n-i-1}\epsilon X^{n-i-1}+a_{n-i}X^{n-i}+\cdots + a_{n-1} X^{n-1}$. 
Then, by Lemma\;\ref{z-indec}, there exists $a\in \O$ such that $a_{0}=\epsilon a$. We define $\mu\in \Hom_A(Z_i, A\oplus A)$
by $\mu(\epsilon)=(0,-aX^{n-1})$. Then, it is easy to check that $\pi\mu=\phi\rho$ holds.
\end{proof}

By Proposition \ref{construction of AR-seq} and Lemma \ref{z-ass}, we have an almost split sequence
\[0\rightarrow Z_{n-i}\rightarrow E_{i}\rightarrow Z_{i}\rightarrow 0,\]
where $E_{i}=\{(f,g,h)\in A\oplus A\oplus Z_{i}\mid \pi(f,g)=\phi(h)\}$ is given by
\begin{gather*}
E_{i}=\O(\epsilon,X^{n-i},0)\oplus\O(\epsilon X,X^{n-i+1},0)\oplus\cdots \oplus\O(\epsilon X^{i-1},X^{n-1},0)\\
\quad\quad\oplus\; \O(X^{i},0,0)\oplus\O(X^{i+1},0,0)\oplus\cdots\oplus \O(X^{n-1},0,0)\\
\quad\qquad\quad\oplus\; \O(X^{i-1},0,\epsilon)\oplus\O(0,0,\epsilon X)\oplus\cdots\oplus\O(0,0,\epsilon X^{n-i-1})\\
\qquad\qquad\qquad\oplus\; \O(0,0,X^{n-i})\oplus\O(0,0,X^{n-i+1})\oplus\cdots \oplus\O(0,0,X^{n-1}).
\end{gather*}
To simplify the notation, we define $a_0=b_0=0$ and 
\begin{align*}
a_{k}&=\left\{\begin{array}{ll}
(X^{n-k},0,0)& \mathrm{if}\ 1\leq k\leq n-i,\\
(\epsilon X^{n-k},X^{2n-k-i},0)& \mathrm{if}\ n-i<k\leq n,\\
\end{array}\right.\\
b_{k}&=\left\{\begin{array}{ll}
(0,0,X^{n-k})& \mathrm{if}\ 1\leq k\leq i,\\
(0,0,\epsilon X^{n-k})& \mathrm{if}\ i<k< n,\\
(X^{i-1},0,\epsilon )& \mathrm{if}\ k=n.\\
\end{array}\right.
\end{align*}
Then, we have
\begin{align*}
Xa_k&=\left\{\begin{array}{ll}
a_{k-1}& (k\neq n-i+1)\\
\epsilon a_{k-1}& (k=n-i+1)\\
\end{array}\right.\\
Xb_k&=\left\{\begin{array}{ll}
b_{k-1}& (k\neq i+1,n)\\
\epsilon b_{k-1}& (k=i+1)\\
a_{n-i}+b_{n-1}& (k=n)\\
\end{array}\right.
\end{align*}
and
$$
\Ker(X^{k})=\bigoplus_{1\le j\le k}(\O a_{j}\oplus \O b_{j}).
$$

\subsection{Almost split sequence ending at $\mathbf{Z_{i}}$}
In this subsection, we show that the middle term $E_{i}$ of the almost split sequence
$$
0\rightarrow Z_{n-i}\rightarrow E_{i}\rightarrow Z_{i}\rightarrow 0
$$
is indecomposable, for $2\leq i\leq n-1$.

\begin{proposition}
\label{E-indec}
We have the following.
\begin{itemize}
\item[(1)] $A$ is an indecomposable direct summand of $E_{1}$.
\item[(2)] For $2\leq i\leq n-1$, $E_{i}$ are indecomposable $A$-lattices. 
\end{itemize}
\end{proposition}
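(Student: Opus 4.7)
The two parts require rather different arguments.

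For \textbf{Part (1)}, I would exhibit $A$ as a direct summand by an explicit section. The element $b_n=(1,0,\epsilon)\in E_1$ has first coordinate $1$, so the first-coordinate projection $\pi_1\colon E_1\to A$ is surjective. Defining $\sigma\colon A\to E_1$ by $\sigma(a)=a\,b_n=(a,0,\epsilon a)$ gives an $A$-linear section of $\pi_1$; the only verification required is that $(a,0,\epsilon a)\in E_1$, i.e.\ that $X^{n-1}a-0=\phi(\epsilon a)$, which is immediate from $\phi(\epsilon)=X^{n-1}$ and the vanishing of $\phi$ on the remaining basis elements of $Z_1$ (noting that $\phi(\epsilon X^{n-1})=X^{n-1}\phi(\epsilon)=X^{2(n-1)}=0$). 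Hence $E_1\cong A\oplus\Ker(\pi_1)$, and $A$ is indecomposable because $\End_A(A)\cong A$ is a local $\O$-algebra.

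For \textbf{Part (2)}, my plan is to show that $\End_A(E_i)$ is a local $\O$-algebra for $2\le i\le n-1$, which is equivalent to $E_i$ being indecomposable. I would take an arbitrary $f\in\End_A(E_i)$ and write it as a matrix on the $\O$-basis $\{a_1,\dots,a_n,b_1,\dots,b_n\}$. The commutation relations $f(Xv)=Xf(v)$, combined with the shift formulas $Xa_k=a_{k-1}$ (with an $\epsilon$ at $k=n-i+1$), $Xb_k=b_{k-1}$ (with an $\epsilon$ at $k=i+1$), and the bridging relation $Xb_n=a_{n-i}+b_{n-1}$, should cut the matrix down to a form in which a single scalar $c\in\O$ governs the diagonal action on both chains, while all remaining entries lie in $\epsilon\O$ or are determined by the above constraints. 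From this one extracts a surjective ring homomorphism $\End_A(E_i)\twoheadrightarrow\kappa$ sending $f\mapsto c\bmod\epsilon$, whose kernel is contained in the set of non-units; hence $\End_A(E_i)$ is local.

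The main obstacle will be showing that the scalar controlling the $a$-chain coincides with the one controlling the $b$-chain. This is exactly the role of the bridging relation $Xb_n=a_{n-i}+b_{n-1}$: applying $f$ to both sides and comparing coefficients in the basis should force the two scalars to agree modulo $\epsilon$. Delicate case analysis will be required at the boundary indices $k=n-i+1$, $k=i+1$, $k=n$, and $k=n-i$, where the action of $X$ either picks up an $\epsilon$ or mixes the two chains; these are precisely the positions where a hypothetical nontrivial idempotent could try to split $E_i$ into two rank-$n$ summands (since any such splitting must respect $M\otimes\K$-rank considerations, each summand having $\O$-rank exactly $n$). Once locality of $\End_A(E_i)$ is established, the indecomposability of $E_i$ follows.
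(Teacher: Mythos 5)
Your argument for Part (1) is correct and in fact slicker than the paper's. The paper writes down an explicit $\O$-basis of the projective summand, whereas you observe directly that $\sigma(a)=ab_n=(a,0,\epsilon a)$ is a section of the first-coordinate projection $\pi_1\colon E_1\to A$. Both are valid; yours avoids having to verify that a twelve-line system of vectors forms a basis.

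Part (2) is where the proposal has genuine gaps, and they are located exactly at the spot you flag as the "main obstacle." First, the claimed shape of $\End_A(E_i)$ modulo $\epsilon$ is not correct. If you write $f(a_n)=\alpha a_n+\beta b_n+(\text{lower})$ and $f(b_n)=\gamma a_n+\delta b_n+(\text{lower})$, then what the bridging relation $Xb_n=a_{n-i}+b_{n-1}$ (together with the identity $\epsilon X^{n-i}b_n-X^{n-1}a_n=\epsilon^2 b_i$, valid when $2\le i\le n-i$) actually forces is $\beta\in\epsilon\O$ and $\alpha\equiv\delta\pmod\epsilon$; it does \emph{not} force $\gamma\in\epsilon\O$. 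Indeed, there is a genuine endomorphism with $f(a_n)=0$ and $f(b_n)=a_n$ — e.g.\ for $n=6$, $i=2$ one gets $f(b_k)=a_k$ for $k\in\{1,2,5,6\}$ and $f(b_k)=\epsilon a_k$ for $k\in\{3,4\}$, with $f(a_k)=0$ — so the off-diagonal entry $\gamma$ can be a unit. Thus the image of $\End_A(E_i)$ in $M_2(\kappa)$ is the local ring $\bigl\{\bigl(\begin{smallmatrix}a & c\\ 0 & a\end{smallmatrix}\bigr)\bigr\}$, not $\kappa\cdot I$. Your conclusion can still be salvaged — one shows that the ring homomorphism $\End_A(E_i)\to M_2(\kappa)$ lands in a local ring, and then a nontrivial idempotent of $\End_A(E_i)$ would project to a rank-one idempotent matrix, contradicting locality of the image — but as stated your map "$f\mapsto c\bmod\epsilon$" with "all remaining entries in $\epsilon\O$" does not exist.

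Second, and more seriously, the computation that yields even the corrected constraints uses $X^{n-i}a_n\in\epsilon E_i$ and $X^{n-i}\Ker(X^{n-1})\subseteq\epsilon E_i$, which hold only when $i\le n-i$. For $i>n-i$ the special indices of the two chains are interchanged and those inclusions fail, so the same manipulation does not produce $\beta\in\epsilon\O$; one has to run a separate computation (or, as the paper does, invoke $E_i\simeq\tau(E_{n-i})$ to reduce to the case already treated, using that the Auslander–Reiten translate of an indecomposable nonprojective lattice is indecomposable). The case $i=n-i$ has its own wrinkle, since then both $\epsilon$'s appear at the same index $k=i+1$ and the relevant scalar $f^{(i+1)}_i$ must pick up an $\epsilon^2$. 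The paper's proof is precisely a careful case analysis of this sort, organized as: suppose $E_i=E'\oplus E''$, build an $\O$-basis of $E'$ adapted to the filtration by $\Ker(X^k)$, show $E'\simeq Z_{n-i}$, and then derive a contradiction from the non-splitness of the almost split sequence via Miyata's theorem. Your route of computing $\End_A(E_i)$ directly and showing it is local is mathematically equivalent and can be made to work, but it requires the same delicate chain-by-chain bookkeeping in all three regimes, none of which you have actually carried out; as written the proposal is a plan rather than a proof.
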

\begin{proof} (1)\; As $Z_{n-1}=\Rad A$, it follows from \cite[Chap.III, Thm.2.5]{A1}. 
We also give more explicit computational proof here.
Define $x_{k},y_{k}\in E_{1}$, for $1\leq k\leq n$, as follows:
\begin{align*}
x_{k}&=\left\{\begin{array}{ll}
a_{1}+\epsilon b_{1}& \mathrm{if}\ k=1,\\
a_{k}+b_{k}& \mathrm{if}\ 2\leq k\leq n-1,\\
b_{n}& \mathrm{if}\ k=n,\\
\end{array}\right.\\
y_{k}&=\left\{\begin{array}{ll}
b_{k}& \mathrm{if}\ 1\leq k\leq n-1,\\
a_{n}-\epsilon b_{n}& \mathrm{if}\ k=n,\\
\end{array}\right.
\end{align*} 
Then they form an $\O$-basis of $E_{1}$. Moreover, we have $Xx_1=0$ and $Xy_1=0$,
\[Xx_{k}=x_{k-1},\;\text{for $2\leq k\leq n$}, \;\;\text{and}\;\; X y_{k}=\left\{\begin{array}{ll}
\epsilon y_{1}& \mathrm{if}\ k=2,\\
y_{k-1}& \mathrm{if}\ 3\leq k\leq n-1,\\
-\epsilon y_{n-1} & \mathrm{if}\ k=n.\\
\end{array}\right.
\]
Thus, the $\O$-span of $\{ x_k \mid 1\leq k\leq n\}$ is isomorphic to the indecomposable projective $A$-lattice $A$. 
In particular, $A$ is an indecomposable direct summand of $E_{1}$, and the other direct summand is indecomposable, because 
it becomes $A\otimes\K$ after tensoring with $\K$.

(2)\;$E_{n-1}$ does not have a projective direct summand by \cite[Chap.III, Thm.2.5]{A1}. Thus, 
\cite[Chap. III, Prop.1.7, Prop.1.8]{A1} and (1) imply that $E_{n-1}\simeq \tau(E_1)$ is indecomposable. 
We assume $2\leq i\leq n-2$ in the rest of the proof. 

Suppose that $E_i=E'\oplus E''$ with $E',E''\neq 0$. Since $Z_i\otimes\K=Z_{n-i}\otimes\K=A\otimes\K$, 
we have $E_i\otimes\K\simeq A\otimes\K\oplus A\otimes\K$, 
which implies that
$$
E'\otimes\K\simeq A\otimes\K\simeq E''\otimes\K. 
$$
In particular, rank\;$E'=n=$rank\;$E''$. Since 
$$
0 \to E'\cap \Ker(X^k) \to E' \to \Im(X^k) \to 0
$$
and $\Im(X^k)$ is a free $\O$-module, we have the increasing sequence of $\O$-submodules 
$$
0 \subsetneq \cdots \subsetneq E'\cap \Ker(X^k) \subsetneq E'\cap \Ker(X^{k+1}) \subsetneq \cdots \subsetneq E'\cap \Ker(X^n)=E'
$$
such that all the $\O$-submodules are direct summands of $E'$ as $\O$-modules. Thus, we may choose an $\O$-basis $\{e'_{k}\}_{1\leq k\leq n}$ 
such that $e'_{k}\in E'\cap\Ker(X^{k})\setminus \Ker(X^{k-1})$. Similarly, we may choose an $\O$-basis $\{e''_{k}\}_{1\leq k\leq n}$ of 
$E''$ such that $e''_{k}\in E''\cap\Ker(X^{k})\setminus \Ker(X^{k-1})$.
Write
$$
\begin{array}{lll}
e'_{k}&=&\alpha_{k}a_{k}+\beta_{k}b_{k}+A'_{k}, \;\text{ for $\alpha_k, \beta_k\in\O$ and $A'_{k}\in \Ker(X^{k-1})$,}\\
e''_{k}&=&\gamma_{k}a_{k}+\delta_{k}b_{k}+A''_{k}, \;\;\text{ for $\gamma_k, \delta_k\in\O$ and $A''_{k}\in \Ker(X^{k-1})$.}
\end{array}
$$
Without loss of generality, we may assume
$$
A'_{k}\in \Ker(X^{k-1})\cap E'',\quad A''_{k}\in \Ker(X^{k-1})\cap E'.
$$
Since $\{e'_{k}, e''_{k}\}$ and $\{a_k, b_k\}$ are $\O$-bases of $\Ker(X^k)/\Ker(X^{k-1})$, we have $\alpha_{k}\delta_{k}-\beta_{k}\gamma_{k}\not\in \epsilon \O$.

As $Xe'_{k}\in \Ker(X^{k-1})\cap E'$, there are $f^{(k)}_{k-1},\cdots, f^{(k)}_{1}\in \O$ such that
$$ Xe'_{k}=f^{(k)}_{k-1}e'_{k-1}+\cdots+f^{(k)}_{1}e'_{1}.$$
Similarly, there are $g^{(k)}_{k-1},\cdots, g^{(k)}_{1}\in \O$ such that
$$ Xe''_{k}=g^{(k)}_{k-1}e''_{k-1}+\cdots+g^{(k)}_{1}e''_{1}.$$
The coefficient of $a_{k-1}$ in $Xe'_{k}$ is given by
\[\left\{\begin{array}{cl}
\alpha_{k}& \mathrm{if}\ k\neq n-i+1,\\
\epsilon \alpha_{k} & \mathrm{if}\ k=n-i+1.\\
\end{array}\right.
\]
Thus, we have 
\[f^{(k)}_{k-1}\alpha_{k-1}=\left\{\begin{array}{cl}
\alpha_{k}& \mathrm{if}\ k\neq n-i+1,\\
\epsilon \alpha_{k} & \mathrm{if}\ k=n-i+1.\\
\end{array}\right.\]
Similarly, we have the following.

\begin{align*}
f^{(k)}_{k-1}\beta_{k-1}&=\left\{\begin{array}{cl}
\beta_{k}& \mathrm{if}\ k\neq i+1,\\
\epsilon \beta_{k} & \mathrm{if}\ k=i+1.\\
\end{array}\right.\\[5pt]
g^{(k)}_{k-1}\gamma_{k-1}&=\left\{\begin{array}{cl}
\gamma_{k}& \mathrm{if}\ k\neq n-i+1,\\
\epsilon \gamma_{k} & \mathrm{if}\ k=n-i+1.\\
\end{array}\right.\\[5pt]
g^{(k)}_{k-1}\delta_{k-1}&=\left\{\begin{array}{cl}
\delta_{k}& \mathrm{if}\ k\neq i+1,\\
\epsilon \delta_{k} & \mathrm{if}\ k=i+1.\\
\end{array}\right.\\ 
\end{align*}

We shall deduce a contradiction in the following three cases and conclude that 
$E_i$ is indecomposable, for $2\leq i\leq n-2$. 
\begin{quote}
\begin{itemize}
\item[(case a)]
$2\leq n-i<i$.
\item[(case b)]
$2\leq i=n-i$.
\item[(case c)]
$2\leq i<n-i$.
\end{itemize}
\end{quote}

Suppose that we are in (case a). 
We multiply each of $e'_{k}$ and $e''_{k}$ by suitable invertible elements to get new $\O$-bases of $E'$ and $E''$ in order to have the equalities 
$$
f^{(k)}_{k-1}=\left\{\begin{array}{cl}
1& \mathrm{if}\ k\neq n-i+1,\\
\epsilon & \mathrm{if}\ k=n-i+1,\\
\end{array}\right.
\;\; \mathrm{and}\quad g^{(k)}_{k-1}=\left\{\begin{array}{cl}
1& \mathrm{if}\ k\neq i+1,\\
\epsilon & \mathrm{if}\ k=i+1,\\
\end{array}\right.
$$
in the new bases. For $k=1$, we keep the original basis elements $e'_1$ and $e''_1$. 
Suppose that we have already chosen new $e'_{j}$ and $e''_{j}$, for $1\leq j\leq k-1$. 
If $k\neq n-i+1, i+1$, then 
$$
f^{(k)}_{k-1}g^{(k)}_{k-1}(\alpha_{k-1}\delta_{k-1}-\beta_{k-1}\gamma_{k-1})=\alpha_{k}\delta_{k}-\beta_{k}\gamma_{k}
$$
implies that $f^{(k)}_{k-1}$ and $g^{(k)}_{k-1}$ are invertible. Thus, multiplying $e'_{k}$ and $e''_{k}$ with 
their inverses respectively, we have $f^{(k)}_{k-1}=1$, $g^{(k)}_{k-1}=1$ in the new basis. Note that we have
$$
\begin{pmatrix} \alpha_1 & \beta_1 \\ \gamma_1 & \delta_1 \end{pmatrix}
=\begin{pmatrix} \alpha_2 & \beta_2 \\ \gamma_2 & \delta_2 \end{pmatrix}
=\cdots\cdot
=\begin{pmatrix} \alpha_{n-i} & \beta_{n-i} \\ \gamma_{n-i} & \delta_{n-i} \end{pmatrix}.
$$
If $k=n-i+1$, then, by using $i\neq n-i$, we have 
\begin{align*}
f^{(n-i+1)}_{n-i}g^{(n-i+1)}_{n-i}\alpha_{n-i}\delta_{n-i}&= \epsilon \alpha_{n-i+1}\delta_{n-i+1},\\
f^{(n-i+1)}_{n-i}g^{(n-i+1)}_{n-i}\beta_{n-i}\gamma_{n-i}&= \epsilon \beta_{n-i+1}\gamma_{n-i+1}.
\end{align*}
It follows that $f^{(n-i+1)}_{n-i}g^{(n-i+1)}_{n-i}\in \epsilon\O\setminus \epsilon^{2}\O$, and 
we may assume 
$$
 f^{(n-i+1)}_{n-i}=\epsilon, \;\; g^{(n-i+1)}_{n-i}=1,
$$
by swapping $E'$ and $E''$ if necessary. Thus, we have 
$$
\begin{pmatrix} \alpha_{n-i} & \beta_{n-i} \\ \gamma_{n-i} & \delta_{n-i} \end{pmatrix}
=\begin{pmatrix} \alpha_{n-i+1} & \epsilon^{-1}\beta_{n-i+1} \\ \epsilon\gamma_{n-i+1} & \delta_{n-i+1} \end{pmatrix}
=\cdots
=\begin{pmatrix} \alpha_{i} & \epsilon^{-1}\beta_{i} \\ \epsilon\gamma_{i} & \delta_{i} \end{pmatrix}.
$$
Finally, if $k=i+1$, then the similar argument shows 
$$ f^{(i+1)}_{i}g^{(i+1)}_{i}\in \epsilon\O\setminus \epsilon^{2}\O, $$ 
and we may assume that $(f^{(i+1)}_{i},g^{(i+1)}_{i})$ is either $(\epsilon,1)$ or $(1,\epsilon)$. In the former case, 
$$
\begin{pmatrix} \alpha_1 & \beta_1 \\ \gamma_1 & \delta_1 \end{pmatrix}
=\begin{pmatrix} \alpha_{i} & \epsilon^{-1}\beta_{i} \\ \epsilon\gamma_{i} & \delta_{i} \end{pmatrix}
=\begin{pmatrix} \epsilon^{-1}\alpha_{i+1} & \epsilon^{-1}\beta_{i+1} \\ \epsilon\gamma_{i+1} & \epsilon\delta_{i+1} \end{pmatrix},
$$
which implies that $\alpha_{i+1}, \beta_{i+1}\in \epsilon\O$, a contradiction. Thus, we obtain
$$
 f^{(i+1)}_{i}=1, \;\; g^{(i+1)}_{i}=\epsilon.
$$
Therefore, we have obtained the desired formula. In particular, we have the following.
\begin{gather*}
\alpha_{k-1}=\alpha_k,\;\; f^{(k)}_{k-1}\beta_{k-1}=g^{(k)}_{k-1}\beta_{k},\;\;
g^{(k)}_{k-1}\gamma_{k-1}=f^{(k)}_{k-1}\gamma_{k},\;\;\delta_{k-1}=\delta_k,\\
Xa_k=f^{(k)}_{k-1}a_{k-1},\;\; Xb_k=g^{(k)}_{k-1}b_{k-1}+\delta_{k,n}a_{n-i},
\end{gather*}
where $\delta_{k,n}$ is the Kronecker delta. Suppose that $1\leq k\leq n-1$. Then, we have 
\begin{align*}
XA'_{k}&=X(e'_k-\alpha_{k}a_k-\beta_{k}b_k)
=Xe'_k - f^{(k)}_{k-1}\alpha_{k}a_{k-1}-g^{(k)}_{k-1}\beta _{k}b_{k-1},\\
f^{(k)}_{k-1}A'_{k-1}&=f^{(k)}_{k-1}(e'_{k-1}-\alpha_{k-1}a_{k-1}-\beta_{k-1}b_{k-1})
=f^{(k)}_{k-1}e'_{k-1}-f^{(k)}_{k-1}\alpha_{k}a_{k-1}-g^{(k)}_{k-1}\beta_{k}b_{k-1}.
\end{align*}
We compute $Xe'_{k}-f^{(k)}_{k-1}e'_{k-1}$ in two ways:
\begin{align*}
Xe'_{k}-f^{(k)}_{k-1}e'_{k-1}&=XA'_{k}-f^{(k)}_{k-1}A'_{k-1} \in E'',\\
Xe'_{k}-f^{(k)}_{k-1}e'_{k-1}&= f^{(k)}_{k-2}e'_{k-2}+\cdots+f^{(k)}_{1}e'_{1} \in E'.
\end{align*}
Thus, we have $Xe'_{k}=f^{(k)}_{k-1}e'_{k-1}$, for $1\leq k\leq n-1$. Next suppose that $k=n$. Then, the similar computation shows
$$
\beta_{n}a_{n-i}+XA'_{n}-f^{(n)}_{n-1}A'_{n-1}=Xe'_{n}-f^{(n)}_{n-1}e'_{n-1}=f^{(n)}_{n-2}e'_{n-2}+\cdots+f^{(n)}_{1}e'_{1}.
$$
We compute $X^{n-i+1}e'_{n}-f^{(n)}_{n-1}X^{n-i}e'_{n-1}$ in two ways as before, and we obtain
$$
X^{n-i+1}A'_{n}-f^{(n)}_{n-1}X^{n-i}A'_{n-1}=f^{(n)}_{n-2}X^{n-i}e'_{n-2}+\cdots+f^{(n)}_{1}X^{n-i}e'_{1}=0.
$$
Hence, we have $f^{(n)}_{n-2}=\cdots=f^{(n)}_{n-i+1}=0$. We define
$$
z_{n}=e'_{n},\;\;
z_{k}=e'_{k}+X^{n-k-1}(f^{(n)}_{n-i}e'_{n-i}+\cdots+f^{(n)}_{1}e'_{1}),\;\text{for}\; 1\leq k\leq n-1.
$$
Then, $\{z_k \mid 1\leq k\leq n\}$ is an $\O$-basis of $E'$, since
$$ X^{n-k-1}(f^{(n)}_{n-i}e'_{n-i}+\cdots+f^{(n)}_{1}e'_{1})\in \Ker(X^{k-1}). $$
Further, we have $z_k=e'_k$, for $1\leq k\leq i-1$. In particular, $z_{n-i}=e'_{n-i}$ by $n-i\leq i-1$. 
Then, we can check that
\[Xz_{k}=\left\{\begin{array}{cl}
z_{k-1}&\mathrm{if}\ k\neq n-i+1,\\
\epsilon z_{k-1}&\mathrm{if}\ k=n-i+1.
\end{array}\right.\]
Thus, we conclude that $E'\simeq Z_{n-i}$. Recall that the exact sequence
\[0\rightarrow Z_{n-i}\rightarrow E_{i}\rightarrow Z_{i}\rightarrow 0 \]
does not split. On the other hand, $E_{i}\simeq Z_{n-i}\oplus Z_{i}$ implies that it must split, by Miyata's theorem \cite[Thm.1]{M}. 
Hence, $E_i$ is indecomposable in (case a).

Next assume that we are in (case b). Then, $f^{(k)}_{k-1}$ and $g^{(k)}_{k-1}$, for $k\neq i+1$, are invertible as before, and we may choose 
$$ f^{(k)}_{k-1}=1,\quad g^{(k)}_{k-1}=1.$$

\noindent
If $k=i+1$, note that
$$
f^{(i+1)}_i\alpha_{i}=\epsilon \alpha_{i+1}, \;\;
f^{(i+1)}_i\beta_{i}=\epsilon \beta_{i+1}, \;\;
g^{(i+1)}_i\gamma_{i}=\epsilon \gamma_{i+1}, \;\;
g^{(i+1)}_i\delta_{i}=\epsilon \delta_{i+1}.
$$
Thus, $\alpha_i,\beta_i\in\epsilon\O$ if $f^{(i+1)}_i$ is invertible, and $\gamma_i,\delta_i\in\epsilon\O$ if $g^{(i+1)}_i$ is invertible. But
both are impossible. Further, 
$$
f^{(i+1)}_ig^{(i+1)}_i(\alpha_{i}\delta_{i}-\beta_{i}\gamma_{i})=\epsilon^2(\alpha_{i+1}\delta_{i+1}-\beta_{i+1}\gamma_{i+1})
$$
implies $f^{(i+1)}_ig^{(i+1)}_i\in \epsilon^2\O\setminus \epsilon^3\O$. Thus, we may choose
$$ f^{(i+1)}_i=\epsilon,\quad g^{(i+1)}_i=\epsilon. $$
Hence, we may assume without loss of generality that 
$$
f^{(k)}_{k-1}=g^{(k)}_{k-1}
=\left\{\begin{array}{cl}
1& \mathrm{if}\ k\neq i+1,\\
\epsilon & \mathrm{if}\ k=i+1.
\end{array}\right.
$$
$$
\begin{pmatrix} \alpha_1 & \beta_1 \\ \gamma_1 & \delta_1 \end{pmatrix}
=\cdots\cdot=\begin{pmatrix} \alpha_i & \beta_i \\ \gamma_i & \delta_i \end{pmatrix}
=\begin{pmatrix} \alpha_{i+1} & \beta_{i+1} \\ \gamma_{i+1} & \delta_{i+1} \end{pmatrix}
=\cdots\cdot=\begin{pmatrix} \alpha_n & \beta_n \\ \gamma_n & \delta_n \end{pmatrix}.
$$
and $Xa_k=f^{(k)}_{k-1}a_{k-1}$, $Xb_k=g^{(k)}_{k-1}b_{k-1}+\delta_{k,n}a_{i}$.
For $1\leq k\leq n-1$, we have
$$
XA'_k-f^{(k)}_{k-1}A'_{k-1}=Xe'_k-f^{(k)}_{k-1}e'_{k-1}=f^{(k)}_{k-2}e'_{k-2}+\cdots+f^{(k)}_1e'_1,
$$
and the same argument as before shows that
\[Xe'_{k}=\left\{\begin{array}{ll}
f^{(k)}_{k-1}e'_{k-1} &\mathrm{if}\ k\neq n,\\
f^{(n)}_{n-1}e'_{n-1}+f^{(n)}_{i}e'_{i}+
\cdots+f^{(n)}_{1}e'_{1} & \mathrm{if}\ k=n.
\end{array}\right.\]
Now, we compute
\begin{align*}
X^{i-1}e'_{n-1}&=f^{(n-1)}_{n-2}\cdots f^{(n-i+1)}_{n-i}e'_{n-i}=\epsilon e'_i,\\
X^{i}a_n&=f^{(n)}_{n-1}\cdots f^{(i+1)}_{i}a_i=\epsilon a_i,\\
X^{i}b_n&=X^{i-1}(b_{n-1}+a_i)=g^{(n-1)}_{n-2}\cdots g^{(i+1)}_{n-i}b_i+f^{(i)}_{i-1}\cdots f^{(2)}_{1}a_1=\epsilon b_i+a_1.
\end{align*}
Thus, we have
\begin{align*}
X^ie'_n-X^{i-1}e'_{n-1}&=X^{i}(\alpha_{n}a_n+\beta_{n}b_n+A'_n)-\epsilon e'_i\\
&=\epsilon(\alpha_{n}a_i+\beta_{n}b_i-e'_i)+X^{i}A'_n+\beta_{n}a_1.
\end{align*}
If $i+1\leq k\leq n-1$ then $k-i+1\leq n-i=i$ and
$$
X^ie'_k=f^{(k)}_{k-1}\cdots f^{(k-i+1)}_{k-i}e'_{k-i}\in \epsilon E'.
$$
Thus, $X^iA'_n\in \epsilon E'$. On the other hand, we have
\begin{align*}
X^ie'_n-X^{i-1}e'_{n-1}&=X^{i-1}(Xe'_n-e'_{n-1})=X^{i-1}(f^{(n)}_{i}e'_i+\cdots+f^{(n)}_1e'_1)\\
&=f^{(n)}_{i}X^{i-1}e'_i=f^{(n)}_{i}X^{i-1}(\alpha_{i}a_i+\beta_{i}b_i+A'_i)\\
&=f^{(n)}_{i}(\alpha_{i}X^{i-1}a_i+\beta_{i}X^{i-1}b_i)=f^{(n)}_{i}(\alpha_{i}a_1+\beta_{i}b_1).
\end{align*}
Hence, we obtain $\beta_{n}a_1\equiv f^{(n)}_{i}(\alpha_{i}a_1+\beta_{i}b_1)\mod\epsilon\O$. The similar computation using $e''_k$ shows 
$\delta_{n}a_1\equiv f^{(n)}_{i}(\gamma_{i}a_1+\delta_{i}b_1)\mod\epsilon\O$. If $f^{(n)}_{i}$ was invertible, it would imply 
$\beta_{i}, \delta_{i}\in \epsilon\O$, which contradicts $\alpha_{i}\delta_{i}-\beta_{i}\gamma_{i}\in\O^\times$. Thus, $f^{(n)}_{i}\in \epsilon\O$ and we have 
$\beta_{n}, \delta_{n}\in \epsilon\O$, which is again a contradiction. Hence, $E_i$ is indecomposable in (case b).

Finally, suppose that we are in (case c). Since $E_{i}\simeq\tau(E_{n-i})$, for $2\leq i\leq n-2$, and $E_{n-i}$ is indecomposable by (cases a), 
It follows from \cite[Chap. III, Prop.1.7, Prop.1.8]{A1} that $E_{i}$ is indecomposable in (case c).
\end{proof}

\subsection{Almost split sequence ending at $\mathbf{E_i}$}
We construct an almost split sequence ending at $E_{i}$, for $2\leq i\leq n-2$. Define $\pi: A^{\oplus 4}\to E_{i}$, for $2\leq i\leq n-2$, by
$$
\pi(p,q,r,s)=(\epsilon p+X^{i-1}q,X^{n-i}p,\epsilon q+\epsilon X r+X^{n-i }s), 
$$
for $(p,q,r,s)\in A^{\oplus 4}$. Note that 
$$ \pi(1,0,0,0)=a_n,\; \pi(0,1,0,0)=b_n,\; \pi(0,0,1,0)=b_{n-1},\; \pi(0,0,0,1)=b_i. $$

\begin{lemma}\label{kernel is E}
Let $\pi:A^{\oplus 4}\to E_i$ be as above. Then, 
\begin{itemize}
\item[(1)]
$\pi$ is an epimorphism.
\item[(2)]
$\Ker(\pi)\simeq E_{n-i}$, for $2\leq i\leq n-2$.
\end{itemize}
\end{lemma}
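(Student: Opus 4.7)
Plan for the proof of Lemma \ref{kernel is E}.

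For part (1), I would prove surjectivity of $\pi$ by exhibiting each element of the $\O$-basis $\{a_k,b_k:1\le k\le n\}$ of $E_i$ inside the image. The formulas given just before the lemma yield
\[
a_n=\pi(1,0,0,0),\quad b_n=\pi(0,1,0,0),\quad b_{n-1}=\pi(0,0,1,0),\quad b_i=\pi(0,0,0,1),
\]
so these four elements lie in $\Im(\pi)$. Multiplying by $X$ inside $A^{\oplus 4}$ and applying the structural relations $Xa_k=a_{k-1}$ (or $\epsilon a_{k-1}$ when $k=n-i+1$), $Xb_k=b_{k-1}$ (or $\epsilon b_{k-1}$ when $k=i+1$), and $Xb_n=a_{n-i}+b_{n-1}$, we recover every $a_k$ and $b_k$. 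The only delicate point is $a_{n-i}$: from $a_n$ alone we get $X^i a_n=\epsilon a_{n-i}$, which is only an $\epsilon$-multiple of $a_{n-i}$; however the twist $a_{n-i}=Xb_n-b_{n-1}$ immediately places $a_{n-i}\in\Im(\pi)$, after which further $X$-multiplication produces $a_{n-i-1},\dots,a_1$. Hence $\pi$ is surjective.

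For part (2), a rank count gives $\operatorname{rank}_\O\Ker(\pi)=4n-2n=2n=\operatorname{rank}_\O E_{n-i}$. To identify $\Ker(\pi)$ with $E_{n-i}$ I would construct an explicit surjective $A$-module morphism $E_{n-i}\twoheadrightarrow\Ker(\pi)$, which is then automatically an isomorphism by the rank coincidence. Writing $a'_k,b'_k$ ($1\le k\le n$) for the standard $\O$-basis of $E_{n-i}$ built in exactly the same way as $a_k,b_k$ but with the roles of $i$ and $n-i$ interchanged, the $A$-module generators of $E_{n-i}$ are $a'_n,b'_n,b'_{n-1},b'_{n-i}$. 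I would send these to four carefully chosen elements of $\Ker(\pi)$, read off from the defining equations
\[
\epsilon p+X^{i-1}q=0,\qquad X^{n-i}p=0,\qquad \epsilon q+\epsilon Xr+X^{n-i}s=0
\]
in $A=\O[X]/(X^n)$. A natural candidate for the image of $a'_n$ is
\[
(X^i,-\epsilon X,\epsilon,0)\in\Ker(\pi),
\]
encoding the relation $X^i a_n=\epsilon Xb_n-\epsilon b_{n-1}$ in $E_i$; the analogous kernel representatives for $b'_n,b'_{n-1},b'_{n-i}$ are extracted from the other obvious relations among the four generators $a_n,b_n,b_{n-1},b_i$ of $E_i$ (in particular from $X^{n-i}b_i\in\epsilon$-terms and from $X^{n-1}b_n=\cdots$).

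With these four elements chosen, I would verify by direct computation in $A^{\oplus 4}$ that the $X$-action on them reproduces exactly the defining relations of $E_{n-i}$: the twist $Xb'_n=a'_i+b'_{n-1}$, the $\epsilon$-jumps $Xa'_{i+1}=\epsilon a'_i$ and $Xb'_{n-i+1}=\epsilon b'_{n-i}$, and the plain shifts $Xa'_k=a'_{k-1}$, $Xb'_k=b'_{k-1}$ in the remaining cases. This produces a well-defined $A$-morphism $E_{n-i}\to\Ker(\pi)$. Surjectivity can then be confirmed by parameterizing $\Ker(\pi)$ explicitly by its $2n$ free coefficients (using the three defining equations above) and checking that the $X$-translates of the four chosen elements $\O$-span those parameters. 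The main obstacle is precisely this bookkeeping: one must pin down the four kernel representatives so that the twist relation $Xb'_n=a'_i+b'_{n-1}$ holds \emph{exactly} inside $\Ker(\pi)\subset A^{\oplus 4}$ (not only up to an $\epsilon$-error) and then check $\O$-linear independence across all three defining equations. This is a brute-force computation in the spirit of Proposition \ref{E-indec}, but with the larger ambient module $A^{\oplus 4}$ and a larger system of equations to keep track of.
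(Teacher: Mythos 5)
Your plan for part (1) is exactly the paper's argument, and your proposed image $(X^i,-\epsilon X,\epsilon,0)$ of $a'_n$ matches what the paper's map produces. The overall strategy for part (2) --- construct an explicit $A$-module map $E_{n-i}\to\Ker(\pi)$ and show it is bijective --- is also the paper's strategy. So the route is the same, not a genuinely different one.

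However, the proposal stops at the point where the actual work begins, and you say so yourself (\emph{"the main obstacle is precisely this bookkeeping"}). You exhibit one of the four generator images and wave at the other three, and you defer both the check that the assignment extends to a well-defined $A$-morphism and the check of surjectivity. That is a real gap: the content of the lemma \emph{is} that bookkeeping. The paper sidesteps the generator-by-generator verification by giving a single closed formula. Writing $E_{n-i}\subset A\oplus A\oplus Z_{n-i}$ as in the construction of $E_j$, an element is a triple $(f,g,h)$ with $h\in Z_{n-i}$ and $X^if-\epsilon g=X^{n-1}h/\epsilon$, and the paper defines
\[
\iota(f,g,h)=\Bigl(g,\ -Xf+\tfrac{X^{n-i}h}{\epsilon},\ f,\ -h\Bigr).
\]
One checks in one line that $\pi\iota=0$, injectivity is visible from the formula (the first and third slots already recover $g$ and $f$), and for the reverse inclusion the paper shows directly that any $(p,q,r,s)\in\Ker(\pi)$ satisfies $(r,p,-s)\in E_{n-i}$ and $\iota(r,p,-s)=(p,q,r,s)$. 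This is both shorter and safer than your proposed route of choosing four kernel elements, propagating them by $X$, checking the twist relation $Xb'_n=a'_i+b'_{n-1}$, and then arguing surjectivity by parameter-counting; note also that a map defined only by images of $A$-generators is not automatically well-defined, so you would still have to produce a formula (or a presentation of $E_{n-i}$) to make that step rigorous. Your rank argument ($\operatorname{rank}\Ker\pi=4n-2n=2n$, and a surjection of free $\O$-modules of equal rank is an isomorphism) is correct and would substitute for the paper's injectivity check, but only after surjectivity is actually established.
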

\begin{proof}
(1)\;  It is easy to check that $a_k, b_k\in \Im(\pi)$, for $1\leq k\leq n$. Note that $E_{i}$ is generated by
$\{a_{n},b_{n},b_{n-1},b_{i}\}$ as an $A$-module and $a_{n-i}=X b_{n}-b_{n-1}$. 

(2)\; We define an $A$-module homomorphism $\iota: E_{n-i} \to A^{\oplus 4}$ by
$$ \iota(f,g,h)=(g,-X f+\frac{X^{n-i} h}{\epsilon},f,-h), \;\;\text{for $(f,g,h)\in E_{n-i}$.} $$
We write $h=h_0\epsilon + h_1\epsilon X + \cdots + h_{i-1}\epsilon X^{i-1} + h_{i}X^{i} + \cdots + h_{n-1}X^{n-1}$, for $h_i \in\O$. Then, 
$$
\frac{X^{n-i} h}{\epsilon}=h_0 X^{n-i} + h_1 X^{n-i+1} + \cdots + h_{i-1} X^{n-1}.
$$
Note that $(f,g,h)\in A^{\oplus 3}$ belongs to $E_{n-i}$ if and only if $h\in Z_{n-i}$ and $X^{i}f-\epsilon g=h_0X^{n-1}$. 
It is clear that $\iota$ is a monomorphism and it suffices to show that $\Im(\iota)=\Ker(\pi)$. Since
\[\begin{array}{lll}
\pi\iota(f,g,h)&=&(\epsilon g-X^{i}f+\frac{X^{n-1}h}{\epsilon},X^{n-i}g,\epsilon(-X f+\frac{X^{n-i}h}{\epsilon})+\epsilon X f-X^{n-i}h)\\
&=&(\epsilon g-X^{i}f+\frac{X^{n-1}h}{\epsilon},X^{n-i}g,0)=(0,0,0),
\end{array}
\]
we have $\Im(\iota)\subseteq\Ker(\pi)$. Let $(p,q,r,s)\in \Ker(\pi)$. Then we have
\[\begin{array}{rcc}
\epsilon p+X^{i-1}q&=&0,\\
X^{n-i}p&=&0,\\
\epsilon q+\epsilon Xr+X^{n-i}s&=&0.\\
\end{array}
\]
The third equation shows that the projective cover $A\twoheadrightarrow M_{n-i}=X^{n-i}A+\epsilon A/\epsilon A\subseteq A\otimes\kappa$ given by 
$f\mapsto X^{n-i}f+\epsilon A$ sends $s$ to $0$. Thus, we have $s\in Z_{n-i}$. Further, 
$$ X^{n-1}s+\epsilon(-\epsilon p+X^{i}r)=X^{n-1}s+\epsilon(X^{i-1}q+X^{i}r)=X^{i-1}(X^{n-i}s+\epsilon q+Xr)=0$$
implies $X^{i}r-\epsilon p=\frac{X^{n-1}(-s)}{\epsilon}$. Hence, we have $(r,p,-s)\in E_{n-i}$ and
\[\iota(r,p,-s)=(p,-Xr-\frac{X^{n-i}s}{\epsilon},r,s)=(p,q,r,s).\]
Therefore, we have $\Ker(\pi)=\Im(\iota)$, which implies $\Ker(\pi)\simeq E_{n-i}$.
\end{proof}

We consider the following pullback diagram:

\bigskip
\hspace{20mm}
\begin{xy}
(0,10) *{0}="A1", (15,10) *{E_{n-i}}="B1", (30,10) *{F_i}="C1", (45,10) *{E_i}="D1", (60,10) *{0}="E1",
(0,0) *{0}="A2", (15,0) *{E_{n-i}}="B2", (30,0) *{A^{\oplus 4}}="C2", (45,0) *{E_i}="D2", (60,0) *{0}="E2",

\ar "A1";"B1"
\ar "B1";"C1"
\ar "C1";"D1"
\ar "D1";"E1"

\ar "A2";"B2"
\ar "B2";"C2"^\iota
\ar "C2";"D2"^{\;\;\pi}
\ar "D2";"E2"

\ar @{=} "B1";"B2"
\ar "C1";"C2"
\ar "D1";"D2"^{\phi}
\end{xy}

\bigskip
\noindent
where $\iota$ is the isomorphism $E_{n-i}\simeq \Ker(\pi)$ defined in the proof of Lemma \ref{kernel is E}, and 
\[\begin{array}{rll}
\phi(a_k)&=\;0 \quad &\text{for}\;\; 1\leq k\leq n,\\
\phi(b_k)&=\;0 \quad &\text{for}\;\; 1\leq k\leq n-1,\\
\phi(b_n)&=\;b_1 \quad &\text{for}\;\; k=n.
\end{array}\]

\begin{lemma}
\label{l1}
Suppose that $2\leq i\leq n-i$. 
Let $\rho\in \Rad \Endhom_{A}(E_{i})$ such that
$$ \rho(a_{n})=\alpha a_{n}+\beta b_{n}+A, \quad \rho(b_{n})=\alpha' a_{n}+\beta' b_{n}+B, $$
where $\alpha, \beta, \alpha', \beta'\in \O$ and $A, B\in\Ker(X^{n-1})$. Then we have the following. 
\begin{itemize}
\item[(1)] $\beta\in \epsilon\O$, and $\alpha\in\epsilon \O$ if and only if $\beta'\in \epsilon \O$.
\item[(2)] $\alpha\beta'-\beta\alpha'\in \epsilon \O$.
\end{itemize}
\end{lemma}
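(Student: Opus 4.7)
My plan is to prove (1) using only that $\rho$ is $A$-linear (the radical hypothesis is not needed here), and then to deduce (2) from $\rho\in\Rad\End_A(E_i)$ via Nakayama's lemma applied twice.

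\emph{Stage 1 (proof of (1)).} Applying $\rho$ to the $A$-linear identities $X^{n-1}a_n=\epsilon a_1$ and $X^{n-1}b_n=\epsilon b_1$ and cancelling $\epsilon$ (which is legitimate because $E_i$ is $\O$-free) gives
\[
\rho(a_1)=\alpha a_1+\beta b_1,\qquad \rho(b_1)=\alpha'a_1+\beta'b_1.
\]
Next, applying $\rho$ to the relation $X^{n-i}b_n=\epsilon b_i+a_1$ and using $X^{n-i}a_n=\epsilon a_i$, one obtains
\[
\epsilon\,\rho(b_i)=\alpha'\epsilon a_i+\beta'\epsilon b_i+(\beta'-\alpha)a_1-\beta b_1+X^{n-i}B.
\]
Since $B\in\Ker(X^{n-1})$, a direct evaluation of $X^{n-i}a_k$ and $X^{n-i}b_k$ on the basis vectors of $\Ker(X^{n-1})$ shows that every term of $X^{n-i}B$ carries a factor of $\epsilon$, so $X^{n-i}B\in\epsilon E_i$. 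Reading off the coefficients of $a_1$ and $b_1$ of the right-hand side, the requirement $\rho(b_i)\in E_i$ forces $(\beta'-\alpha)/\epsilon\in\O$ and $\beta/\epsilon\in\O$. This yields $\beta\in\epsilon\O$ together with $\beta'\equiv\alpha\pmod{\epsilon}$, which is precisely (1).

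\emph{Stage 2 (proof of (2)).} Since $E_i$ is indecomposable by Proposition \ref{E-indec}, $\End_A(E_i)$ is local, so $\rho\in\Rad\End_A(E_i)$ is equivalent to $\rho$ not being an isomorphism. Two applications of Nakayama's lemma (first mod $\epsilon$, then mod $X$) reduce this to the non-invertibility of the induced $\kappa$-linear endomorphism $\tilde\rho$ on the four-dimensional $\kappa$-vector space $V:=(E_i/\epsilon E_i)/X(E_i/\epsilon E_i)$. A direct inspection of the action of $X$ on the basis of $E_i$ shows that $\{b_n,a_n,b_{n-1},b_i\}$ descends to a basis of $V$, and that $Xb_n=b_{n-1}+a_{n-i}$ enforces the identification $a_{n-i}\equiv -b_{n-1}$ in $V$. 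Using the formulas from Stage 1 together with the expression for $\rho(a_{n-i})$ obtained from $X^ia_n=\epsilon a_{n-i}$ and the substitution $\rho(b_{n-1})=X\rho(b_n)-\rho(a_{n-i})$, the matrix of $\tilde\rho$ in this ordered basis comes out block lower-triangular with diagonal $(\beta',\alpha,\alpha,\beta')$, hence has determinant $\alpha^2(\beta')^2$. Non-invertibility therefore gives $\alpha\beta'\equiv 0\pmod{\epsilon}$, and combined with $\beta'\equiv\alpha\pmod{\epsilon}$ this forces $\alpha\in\epsilon\O$. Since also $\beta\in\epsilon\O$, we conclude $\alpha\beta'-\beta\alpha'\in\epsilon\O$, which is (2).

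The main obstacle is the explicit mod-$X$ matrix computation in Stage 2: one must carefully compute $\tilde\rho(b_{n-1})$ and $\tilde\rho(b_i)$, which requires dividing by $\epsilon$ (justified by the constraints from Stage 1) and then projecting onto $V$, checking that the terms coming from $A$ and $B$ (entries like $A^b_{n-1}-A^a_{n-i}$, $A^b_i$, etc.) contribute only off-diagonally. A minor case distinction between $i<n-i$ and $i=n-i$ arises: in the latter, $a_i\equiv -b_{n-1}$ in $V$ produces a $-\alpha'$ in position $(b_{n-1},b_i)$ of the matrix instead of $0$, and the $A^b_{2i}$-correction in position $(b_i,b_{n-1})$ disappears; however in both cases the diagonal remains $(\beta',\alpha,\alpha,\beta')$ and the determinant formula $\alpha^2(\beta')^2$ is unchanged.
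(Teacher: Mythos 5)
Your Stage 1 reproduces the paper's proof of (1) essentially verbatim: both apply $\rho$ to $X^{n-i}b_n=\epsilon b_i + a_1$, observe that $X^{n-i}B\in\epsilon E_i$, divide by $\epsilon$, and read off the coefficients of $a_1$ and $b_1$ mod $\epsilon$.

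Stage 2 is correct but follows a genuinely different route from the paper. The paper works with the graded pieces $\Ker(X^k)/\Ker(X^{k-1})$: for each $k$ it records the induced $2\times 2$ matrix $\left(\begin{smallmatrix}\alpha_k&\alpha'_k\\\beta_k&\beta'_k\end{smallmatrix}\right)$ and shows by a telescoping identity across the three ranges $k\le i$, $i<k\le n-i$, $k>n-i$ that its determinant always equals $\alpha\beta'-\beta\alpha'$; if this were a unit, $\rho$ would be invertible on each graded piece and hence surjective on $E_i$, contradicting $\rho\in\Rad\End_A(E_i)$. You instead pass to the single $4$-dimensional $\kappa$-space $V=(E_i/\epsilon E_i)/X(E_i/\epsilon E_i)$, identify the basis $\{b_n,a_n,b_{n-1},b_i\}$ with the relation $a_{n-i}\equiv -b_{n-1}$, and compute the full $4\times 4$ matrix of $\tilde\rho$. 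I checked this computation: for $i<n-i$ the matrix is genuinely lower triangular, for $i=n-i$ only block lower triangular (the extra $-\bar{\alpha'}$ appears at $(b_{n-1},b_i)$), and in both cases the diagonal is $(\bar{\beta'},\bar\alpha,\bar\alpha,\bar{\beta'})$, giving determinant $\bar\alpha^2\bar{\beta'}^2$; note that the vanishing of the $(b_n,a_n)$ entry uses $\beta\in\epsilon\O$ from (1). Your approach has the advantage of reducing to one explicit finite-dimensional linear-algebra computation over $\kappa$, and it actually yields the slightly sharper conclusion $\alpha,\beta'\in\epsilon\O$; the paper's approach avoids the delicate identification of $V$ and the case split at $i=n-i$, at the price of tracking the $\epsilon$-rescalings of $\alpha_k,\beta_k,\alpha'_k,\beta'_k$ over all $k$. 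Both are valid; unsurprisingly your determinant $\alpha^2(\beta')^2$ is, modulo $\epsilon$ and the constraints from (1), the square of the paper's $\alpha\beta'-\beta\alpha'$. One small remark: you invoke indecomposability of $E_i$ to get the equivalence ``$\rho\in\Rad\Leftrightarrow\rho$ not an isomorphism,'' but only the forward implication (a radical element is a non-unit, hence a non-isomorphism) is needed, and that holds without locality.
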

\begin{proof}
(1)\; We compute $\rho(\epsilon X^{n-i}b_n-X^{n-1}a_n)$ in two ways. Since $X^{n-i}b_n=\epsilon b_i+a_1$ and $X^{n-1}a_n=\epsilon a_1$, we have
$\rho(\epsilon X^{n-i}b_n-X^{n-1}a_n)=\epsilon^2\rho(b_i)\in \epsilon^2 E_i$. On the other hand, since $X^{n-i}b_n=\epsilon b_i+a_1$, we have
\begin{align*}
\rho(\epsilon X^{n-i}b_n-X^{n-1}a_n)&=\epsilon X^{n-i}(\alpha'a_n+\beta'b_n+B)-X^{n-1}(\alpha a_n+\beta b_n+A)\\
&=\epsilon\alpha'X^{n-i}a_n+\epsilon^2\beta'b_i+\epsilon(\beta'-\alpha)a_1-\epsilon\beta b_1+\epsilon X^{n-i}B.
\end{align*}
Then, $X^{n-i}a_k = \epsilon a_{k-n+i}$ and  $X^{n-i}b_k = \epsilon b_{k-n+i}$, for $n-i+1\leq k \leq n-1$, imply that 
$\epsilon X^{n-i}B\in \epsilon^2E_i$. Hence, we may divide the both sides by $\epsilon$. Reducing modulo $\epsilon$, we have
$$ (\beta'-\alpha)a_1-\beta b_1\equiv 0\; \mod\epsilon E_i, $$
since $X^{n-i}a_n\equiv 0 \mod \epsilon E_i$ if $2\leq i\leq n-i$.
Now, the claim is clear.

(2)\;Since $\rho(a_k),\rho(b_k)\in \Ker(X^k)$, we may write
\begin{align*}
\rho(a_{k})&=\alpha_{k}a_{k}+\beta_{k}b_{k}+A_{k},\\
\rho(b_{k})&=\alpha'_{k}a_{k}+\beta'_{k}b_{k}+B_{k},
\end{align*}
where $\alpha_{k},\beta_{k},\alpha'_{k},\beta'_{k}\in \O$ and $A_{k},B_{k}\in \Ker(X^{k-1})$. 
We claim that 
$$ \alpha_{k}\beta_{k}'-\beta_{k}\alpha_{k}'=\alpha\beta'-\beta\alpha'. $$
To see this, observe that we have the following identities in $E_i/\Ker(X^{k-1})$. 
\[\left\{\begin{array}{cll}
\alpha a_{k}+\beta b_{k} \equiv \rho(X^{n-k}a_{n}) \equiv \rho(a_{k}) &\mod \Ker(X^{k-1}) & \mathrm{if}\ k>n-i,\\
\alpha \epsilon a_{k}+\beta b_{k} \equiv \rho(X^{n-k}a_{n}) \equiv \epsilon \rho(a_{k}) &\mod \Ker(X^{k-1})& \mathrm{if}\ i< k\leq n-i,\\
\alpha \epsilon a_{k}+\beta \epsilon b_{k} \equiv \rho(X^{n-k}a_{n}) \equiv \epsilon \rho(a_{k}) &\mod \Ker(X^{k-1})& \mathrm{if}\ k\leq i ,
\end{array}\right.\]
\[\left\{\begin{array}{cll}
\alpha' a_{k}+\beta' b_{k} \equiv \rho(X^{n-k}b_{n}) \equiv \rho(b_{k}) &\mod \Ker(X^{k-1})& \mathrm{if}\ k>n-i,\\
\alpha' \epsilon a_{k}+\beta' b_{k} \equiv \rho(X^{n-k}b_{n}) \equiv \rho(b_{k}) &\mod \Ker(X^{k-1})& \mathrm{if}\ i< k\leq n-i,\\
\alpha' \epsilon a_{k}+\beta'\epsilon b_{k} \equiv \rho(X^{n-k}b_{n}) \equiv \epsilon \rho( b_{k}) &\mod \Ker(X^{k-1})& \mathrm{if}\ k\leq i.
\end{array}\right.\]

\medskip
\noindent
Thus, if we denote
\begin{align*}
(\overline{a}_k, \overline{b}_k)&=(a_{k}+\Ker(X^{k-1}),b_{k}+\Ker(X^{k-1})),\\
(\overline{a}'_k, \overline{b}'_k)&=(\rho(a_{k})+\Ker(X^{k-1}),\rho(b_{k})+\Ker(X^{k-1})),
\end{align*}
Then, we have
$$
(\overline{a}_k, \overline{b}_k)
\begin{pmatrix}
\alpha_k & \alpha'_k \\
\beta_k & \beta'_k
\end{pmatrix}
=(\overline{a}'_k, \overline{b}'_k)
=(\overline{a}_k, \overline{b}_k)
\begin{pmatrix}
\alpha & \alpha' \\
\beta & \beta'
\end{pmatrix}
\;\;\text{or}\;\;
(\overline{a}_k, \overline{b}_k)
\begin{pmatrix}
\alpha & \alpha'\epsilon \\
\beta\epsilon^{-1} & \beta'
\end{pmatrix}.
$$
Therefore, we have $\alpha_{k}\beta_{k}'-\beta_{k}\alpha_{k}'=\alpha\beta'-
\beta\alpha'$. In particular if $\alpha\beta'-\beta\alpha'\in \O^\times$, then $\rho$ is surjective, which contradicts $\rho\in \Rad \End_A(E_i)$.
\end{proof}

\begin{lemma}
\label{e-ass}
Suppose that $2\leq i\leq n-i$, and let $\phi\in \End_A(E_i)$ be as in the definition of the pullback diagram. Then we have the following.
\begin{itemize}
\item[(1)] $\phi$ does not factor through $\pi$.
\item[(2)] For any $\rho\in \Rad\Endhom_{A}(E_{i})$, $\phi\rho$ factors through $\pi$.
\end{itemize}
\end{lemma}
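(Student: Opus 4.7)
The plan is to prove both parts by explicit computation with the four $A$-module generators $a_n, b_n, b_{n-1}, b_i$ of $E_i$, tracing how the defining relations of $E_i$ and the formula for $\pi$ constrain any lift $\mu \in \Hom_A(E_i, A^{\oplus 4})$.

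For (1), suppose for contradiction that $\phi = \pi\mu$, and write $\mu(b_n) = (p,q,r,s)$, $\mu(a_n) = (P,Q,R,S)$, $\mu(b_{n-1}) = (p',q',r',s')$, $\mu(b_i) = (p'',q'',r'',s'')$. Reading off the coefficient of $X^{n-1}$ in the third coordinate of $\pi\mu(b_n) = b_1 = (0,0,X^{n-1})$ yields $s_{i-1} \equiv 1 \pmod{\epsilon}$. The strategy is to show that the same $s_{i-1}$ must lie in $\epsilon\O$. This will use three further relations in $E_i$: the identity $X^{n-i-1}b_{n-1} = \epsilon b_i$ forces $X^{n-i-1}s' \in \epsilon A$, hence $s'_i \in \epsilon\O$; the identity $\epsilon a_{n-i} = X^i a_n$ forces $X^i R \in \epsilon A$, hence $R_{n-i-1} \in \epsilon\O$; and the first and third coordinates of $\pi\mu(a_n) = 0$ give $Q_{n-i} = -\epsilon P_{n-1} \in \epsilon\O$ and $S_0 = -\epsilon(Q_{n-i} + R_{n-i-1})$, so $S_0/\epsilon \in \epsilon\O$. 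Finally, taking the fourth coordinate of $X\mu(b_n) = \mu(b_{n-1}) + \mu(a_{n-i})$, and using that the fourth coordinate of $\mu(a_{n-i})$ is $X^i S/\epsilon$, the coefficient of $X^i$ gives $s_{i-1} = s'_i + S_0/\epsilon \in \epsilon\O$, the required contradiction.

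For (2), first invoke Lemma \ref{l1}: $\beta \in \epsilon\O$ and $\alpha\beta' - \beta\alpha' \in \epsilon\O$, and this combination forces $\alpha, \beta' \in \epsilon\O$ as well (otherwise $\alpha\beta'-\beta\alpha'$ would not lie in $\epsilon\O$). Since $\phi$ annihilates $\Ker(X^{n-1})$ and all generators $a_k, b_k$ with $k < n$, one directly computes $\phi\rho(a_n) = \beta b_1$, $\phi\rho(b_n) = \beta' b_1$, while $\phi\rho$ vanishes on $b_{n-1}$ and $b_i$. It then remains to exhibit an explicit lift. Writing $\beta = \epsilon\tilde\beta$ and $\beta' = \epsilon\tilde\beta'$, set
\[
\mu(a_n) = (0,0,0,\beta X^{i-1}), \qquad \mu(b_n) = (0,0,0,\beta' X^{i-1}),
\]
and propagate through the relations $\epsilon a_{n-i} = X^i a_n$ and $X^{n-i-1}b_{n-1} = \epsilon b_i$ to obtain
\[
\mu(a_{n-i}) = (0,0,0,\tilde\beta X^{2i-1}), \quad \mu(b_{n-1}) = (0,0,0,\beta' X^i - \tilde\beta X^{2i-1}), \quad \mu(b_i) = (0,0,0,\tilde\beta' X^{n-1}).
\]
Each verification of $\pi\mu(x) = \phi\rho(x)$ for $x \in \{a_n, b_n, b_{n-1}, b_i\}$ then reduces to a short polynomial identity in $A$.

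The principal difficulty in (1) is bookkeeping: identifying precisely which coefficients of $s, s', S, R$ are forced into $\epsilon\O$ by which relation, and recognizing that the identity $s_{i-1} = s'_i + S_0/\epsilon$ assembles them into the desired contradiction. For (2), the delicate point is to choose a lift simple enough that all $A$-module relations in $E_i$ are automatically respected; concentrating the non-zero values in the fourth slot of $A^{\oplus 4}$ achieves this cleanly, but one must still be careful that the derived value for $\mu(a_{n-i})$ is non-trivial when $2i-1 < n$ and that it cancels correctly in $\mu(b_{n-1})$.
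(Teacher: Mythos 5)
Your proof of part (1) is correct and is in substance the same as the paper's: the paper tracks the quantity $\psi_4(X^{n-i}b_n)$ and derives $\psi_4(X^{n-i}b_n)\equiv 0\pmod{\epsilon A}$ from $\pi\psi(a_n)=0$, while simultaneously $\pi\psi(b_n)=b_1$ forces it to be nonzero modulo $\epsilon A$. You run the same chain of deductions, just written out coefficient by coefficient; the identity $s_{i-1}=s'_i+S_0/\epsilon$ is exactly the paper's contradiction in explicit form.

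For part (2) there is a genuine gap in the deduction that $\beta'\in\epsilon\O$. You write that $\beta\in\epsilon\O$ and $\alpha\beta'-\beta\alpha'\in\epsilon\O$ together ``force $\alpha,\beta'\in\epsilon\O$ (otherwise $\alpha\beta'-\beta\alpha'$ would not lie in $\epsilon\O$)''. But $\beta\in\epsilon\O$ and $\alpha\beta'-\beta\alpha'\in\epsilon\O$ only give $\alpha\beta'\in\epsilon\O$, which says that \emph{at least one} of $\alpha,\beta'$ lies in $\epsilon\O$; it is perfectly consistent with $\alpha\in\epsilon\O$ and $\beta'$ a unit, in which case your reason fails. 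What closes the gap is the equivalence from Lemma~\ref{l1}(1): $\alpha\in\epsilon\O$ if and only if $\beta'\in\epsilon\O$. Since not both can be units, the equivalence forces both to lie in $\epsilon\O$. This is precisely how the paper argues (``if $\beta'$ was invertible then $\alpha$ would be invertible, which contradicts Lemma~\ref{l1}(2)''), and you should invoke the iff explicitly.

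Once $\beta,\beta'\in\epsilon\O$ is established, your lift concentrated in the fourth coordinate is a valid alternative to the paper's, which instead takes $\psi=(0,\psi_2,0,0)$ with the closed formula $\psi_2(f,g,h)=\bigl(\beta X^{n-1}f+\beta'X^{n-1}h\bigr)/\epsilon^2$. Your construction propagates values from the four $A$-generators, which is fine, but since $a_n,b_n,b_{n-1},b_i$ are not \emph{free} generators you must still check that the assignment is compatible with all relations of $E_i$ (equivalently, define $\psi$ on the $\O$-basis $\{a_k,b_k\}$ and verify $X$-equivariance, in particular that $X\psi(a_1)=X\psi(b_1)=0$ and that the divisions by $\epsilon$ at $a_{n-i}$ and $b_i$ land in $\O$). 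This does go through with your choice of data, but the verification should be spelled out; the paper's single closed formula sidesteps the issue entirely.
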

\begin{proof}
(1)\;Suppose that there exists 
$$ \psi=(\psi_1,\psi_2,\psi_3,\psi_4):E_{i}\longrightarrow A\oplus A\oplus A\oplus A $$
such that $\pi\psi=\phi$. Then, we have
\begin{align*}
0&=\pi\psi(a_{n})
=(\epsilon\psi_1(a_{n})+X^{i-1}\psi_2(a_{n}),X^{n-i}\psi_1(a_{n}),\epsilon \psi_2(a_{n})+\epsilon X\psi_3(a_{n})+X^{n-i}\psi_4(a_{n})),\\
b_1&=\pi\psi(b_{n})
=(\epsilon\psi_1(b_{n})+X^{i-1}\psi_2(b_{n}),X^{n-i}\psi_1(b_{n}),\epsilon \psi_2(b_{n})+\epsilon X\psi_3(b_{n})+X^{n-i}\psi_4(b_{n})).
\end{align*}
The first equality implies $\psi_4(X^{n-1}a_{n})\in \epsilon^{2}A$ by the following computation.
\begin{align*}
\psi_4(X^{n-1}a_{n})&=X^{i-1}( X^{n-i}\psi_4(a_{n}))=-X^{i-1}(\epsilon \psi_2(a_{n})+\epsilon X\psi_3(a_{n}))\\
&=-\epsilon X^{i-1}\psi_2(a_{n})-\epsilon \psi_3(X^ia_{n})=\epsilon^{2}\psi_1(a_{n})-\epsilon^{2}\psi_3(a_{n-i}).
\end{align*}
Thus, we conclude $\psi_4(X^{n-i}b_n)\equiv 0 \mod \epsilon A$ from
\begin{align*}
\epsilon\psi_4(X^{n-i}b_n)&=\epsilon\psi_4(X^{n-i-1}a_{n-i}+X^{n-i-1}b_{n-1})=\epsilon\psi_4(a_{1}+\epsilon b_{i})\\
&=\psi_4(\epsilon a_{1})+\epsilon^2\psi_4(b_{i})=\psi(X^{n-1}a_{n})+\epsilon^2\psi_4(b_{i})\in \epsilon^2A.
\end{align*}
On the other hand, using $b_1=(0,0,X^{n-1})$, the second equality implies 
\[\epsilon \psi_2(b_{n})+\epsilon X\psi_3(b_{n})+X^{n-i}\psi_4(b_{n})=X^{n-1},\]
and we have $\psi_4(X^{n-i}b_{n})\not\equiv 0 \mod \epsilon A$. Hence, we have reached a contradiction.

(2)\;Let $\rho\in\Rad\Endhom_{A}(E_{i})$. We write $\rho(a_{n})=\alpha a_{n}+\beta b_{n}+A$ and $\rho(b_{n})=\alpha' a_{n}+\beta' b_{n}+B$, 
where $\alpha, \beta, \alpha', \beta'\in\O$ and $A,B\in\Ker(X^{n-1})$. Then, $\phi\rho(a_{n})=\beta b_{1}$ and $\phi\rho(b_{n})=\beta' b_{1}$. 

By Lemma\:\ref{l1}(1), $\beta\in \epsilon\O$ and if $\beta'$ was invertible then $\alpha$ would be invertible, which contradicts Lemma\:\ref{l1}(2). 
Thus, $\beta,\beta'\in \epsilon \O$ and we may define $\psi_2:E_{i}\rightarrow A$ by
$$
(f,g,h) \mapsto \frac{\beta X^{n-1}f}{\epsilon^2}+\frac{\beta'X^{n-1}h}{\epsilon^2},
$$
where $(f,g, h)\in A\oplus A\oplus Z_i$ with $X^{n-i}f-\epsilon g=X^{n-1}h/\epsilon$. This is well-defined. Indeed, we have 
$\psi_2(a_k)=0$ and $\psi_2(b_k)=0$, for $1\leq k\leq n-1$, and 
$$
\psi_2(a_n)=\frac{\beta}{\epsilon}X^{n-1}, \quad \psi_2(b_n)=\frac{\beta'}{\epsilon}X^{n-1}. 
$$
Then $\psi=(0,\psi_2,0,0):E_{i}\rightarrow A\oplus A\oplus A\oplus A$ satisfies $\pi\psi=(X^{i-1}\psi_2,0,\epsilon \psi_2)=\phi\rho$.
\end{proof}

By Proposition \ref{construction of AR-seq} and Lemma \ref{e-ass}, we have an almost split sequence
\[0\rightarrow E_{n-i}\rightarrow F_{i}\rightarrow E_{i}\rightarrow 0, \]
where $F_{i}=\{(p,q,r,s,t)\in A^{\oplus 4}\oplus E_{i}\mid \pi (p,q,r,s)=\phi(t)\}$, for $2\leq i\leq n-i$. 

We define $z_k=(0,0,0,0,a_{k})\in F_{i}$, for $1\leq k\leq n$, and $x_{k}, y_{k}, w_{k}\in F_{i}$, for $1\leq k\leq n$, by

\[\begin{array}{lll}
x_{k}&=&\left\{\begin{array}{ll}
         (0,0,0,X^{n-k},a_{k})& \mathrm{if}\ 1\leq k\leq n-i,\\
         (0,0,-X^{2n-i-k-1},\epsilon X^{n-k},a_{k})& \mathrm{if}\ n-i<k \leq n.\\
\end{array}\right.\\[10pt]
y_{k}&=&\left\{\begin{array}{ll}
         (0,0,0,0,b_{k})& \mathrm{if}\ 1\leq k\leq i,\\
         (0,0,0,X^{n+i-k-1},b_{k}+a_{k-i+1})& \mathrm{if}\ i<k<n,\\
         (0,0,0,X^{i-1},b_{n})& \mathrm{if}\ k=n.\\
\end{array}\right.\\[20pt]
w_{k}&=&\left\{\begin{array}{ll}
         (0,-X^{n-k+1},X^{n-k},0,0)& \mathrm{if}\ 1\leq  k\leq i,\\
         (X^{n-k+i},-\epsilon X^{n-k+1},\epsilon X^{n-k},0,0)& \mathrm{if}\ i<k\leq n.\\
\end{array}\right.\\[10pt]
        \end{array}             
\]

\medskip
\noindent
Note that $(p,q,r,s,t)\in F_i$ if and only if 
$$
(\epsilon p+X^{i-1}q,X^{n-i}p, \epsilon q+\epsilon Xr+X^{n-i}s)=\beta_n b_1,
$$
where $t=\sum_{k=1}^n (\alpha_ka_k+\beta_kb_k)$.

\begin{lemma}
\label{F-basis}
$\{x_{k}, y_{k}, z_{k}, w_{k} \mid 1\leq k\leq n \}$ is an $\O$-basis of $F_{i}$. 
\end{lemma}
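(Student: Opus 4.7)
The plan is to prove the lemma in three stages. First, I will verify that each of the $4n$ proposed elements lies in $F_i$ by checking the pullback condition $\pi(p,q,r,s)=\phi(t)$. For the $z_k$, $w_k$, and for $x_k, y_k$ with $k\ne n$, the right-hand side is zero (since $\phi$ is supported only on $b_n$), and the left-hand side vanishes by a direct computation using $X^n=0$ together with the threshold conditions ($k\leq n-i$ versus $k>n-i$ for $x_k$, and $k\leq i$ versus $k>i$ for $y_k$ and $w_k$) built into the piecewise formulas. The only case with a nonzero right-hand side is $y_n$, where $\pi(0,0,0,X^{i-1})=(0,0,X^{n-i}\cdot X^{i-1})=(0,0,X^{n-1})=b_1=\phi(b_n)$.

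Next, I invoke the short exact sequence $0\to E_{n-i}\to F_i\to E_i\to 0$ arising from the pullback. Each middle term $E_j$ of the almost split sequence $0\to Z_{n-j}\to E_j\to Z_j\to 0$ is a free $\O$-module of rank $2n$ (since $\mathrm{rank}\,Z_j=n$ for every $j$), so $F_i$ is free of rank $4n$. Hence it suffices to prove that the $4n$ proposed elements are $\O$-linearly independent, equivalently that they span $F_i$.

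Finally, I split the argument along the exact sequence. Under the projection $F_i\to E_i$ onto the last coordinate, the images are $z_k\mapsto a_k$, $w_k\mapsto 0$, $y_k\mapsto b_k$ for $1\leq k\leq i$, $y_k\mapsto b_k+a_{k-i+1}$ for $i<k<n$, and $y_n\mapsto b_n$. These give an upper triangular change of basis with respect to the $\O$-basis $\{a_1,\dots,a_n,b_1,\dots,b_n\}$ of $E_i$ (each correction $a_{k-i+1}$ has index strictly less than $k$), so $\{z_k,y_k\}_{k=1}^n$ projects to an $\O$-basis of $E_i$. Meanwhile $\{w_k,\,x_k-z_k\}_{k=1}^n$ lies in $\ker(F_i\to E_i)\simeq E_{n-i}$ via the embedding $\iota$ of Lemma~\ref{kernel is E}; pulling back through $\iota$, one checks that $w_k$ corresponds (up to sign and threshold-dependent unit multiples) to the standard generator $a'_k$, and $x_k-z_k$ corresponds to $b'_k$, in the analogous $\O$-basis $\{a'_k,b'_k\}$ of $E_{n-i}$. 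Combining the two halves produces the required $\O$-basis of $F_i$.

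The main obstacle will be the bookkeeping in the last step: the piecewise formulas for $x_k$, $y_k$, $w_k$ have thresholds at $k=n-i$ and $k=i$, while the basis $\{a'_k,b'_k\}$ of $E_{n-i}$ has its own thresholds at $k=i$ and $k=n-i$ (the roles of $i$ and $n-i$ are swapped). If the explicit matching becomes unwieldy, a cleaner alternative is to reduce modulo $\epsilon$ and verify directly that the images of the $4n$ elements in the $4n$-dimensional $\kappa$-vector space $F_i/\epsilon F_i$ are $\kappa$-linearly independent, then invoke Nakayama's lemma.
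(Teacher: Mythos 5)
Your overall strategy is sound and genuinely different from the paper's. The paper reduces to showing that the $4n$ elements \emph{span} $F_i$ (using ${\rm rank}\,F_i=4n$) and then verifies spanning directly: it shows $(\Ker(\pi),0)\subseteq F_i'$, then $(0^{\oplus 4},\Ker(\phi))\subseteq F_i'$, and finally handles an arbitrary $(p,q,r,s,t)$ by splitting off the $b_n$-component of $t$. You instead decompose along the short exact sequence $0\to E_{n-i}\to F_i\to E_i\to 0$, showing that $\{z_k,y_k\}$ projects to a basis of $E_i$ and that $\{w_k,\,x_k-z_k\}$ is a basis of the kernel. This gives a cleaner structural explanation of why the count works out, at the price of more bookkeeping with $\iota$.

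Two inaccuracies, neither fatal. First, \emph{``$\O$-linearly independent, equivalently that they span''} is wrong over $\O$: a free $\O$-module of rank $m$ contains $m$ linearly independent elements that are not a basis (e.g. $\epsilon\in\O$). Spanning does imply basis for $m$ elements of a rank-$m$ free module, which is why the paper phrases its reduction as ``it suffices to show that they generate''; independence alone would not suffice. Your actual argument in the third paragraph does prove spanning, so this is only a misphrasing. Second, the claimed correspondence $x_k-z_k\leftrightarrow b'_k$ ``up to sign and threshold-dependent unit multiples'' is not correct for $n-i<k<n$. A direct computation with $\iota(f,g,h)=(g,\,-Xf+\tfrac{X^{n-i}h}{\epsilon},\,f,\,-h)$ gives
\[
\iota^{-1}(x_k-z_k)=
\begin{cases}
-b'_k & \text{if } 1\le k\le n-i \text{ or } k=n,\\
-b'_k - a'_{k+i+1-n} & \text{if } n-i<k<n,
\end{cases}
\]
so for the middle range there is an extra $a'$-term, not merely a unit multiple. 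Since $w_j=\iota(a'_j)$ exactly and $2\le k+i+1-n\le i$, this extra term is absorbed into the $w$-block and the change of basis is still unipotent upper triangular; the conclusion that $\{w_k,\,x_k-z_k\}$ is a basis of $\Ker(\pi)\simeq E_{n-i}$ therefore stands, but the claim as stated would mislead a reader trying to carry out the check.

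The Nakayama alternative you suggest at the end is valid and would sidestep both issues, but it is only sketched.
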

\begin{proof}
It suffices to show that they generate $F_i$ as an $\O$-module, since ${\rm rank}\;F_i=4n$. Let $F'_i$ be the 
$\O$-submodule generated by $\{x_{k}, y_{k}, z_{k}, w_{k} \mid 1\leq k\leq n \}$. We show first 
that $(\Ker(\pi),0)\subseteq F'_i$. Recall that any element of $(\Ker(\pi),0)=(\Im(\iota),0)$ has the form
$$
(g,-X f+\frac{X^{n-i} h}{\epsilon},f,-h,0),
$$
where $(f,g,h)\in A\oplus A\oplus Z_{n-i}$ and $X^{i}f-\epsilon g=X^{n-1}h/\epsilon$. Thus, $X^{n-i}g=0$ and $g$ is an $\O$-linear 
combination of $X^{n-k+i}$, for $i<k\leq n$. Thus, subtracting the corresponding $\O$-linear combination of $w_k$, for $i<k\leq n$, 
we may assume $g=0$. Since
$$ h\in Z_{n-i}=\O\epsilon\oplus\cdots\oplus \O\epsilon X^{i-1}\oplus \O X^i\oplus\cdots\oplus \O X^{n-1}, $$
we may further subtract an $\O$-linear combination of $x_k$, for $1\leq k\leq n$, and we may asume $g=h=0$ without loss of generality. Then, 
$(0,-Xf,f,0,0)$, for $f\in A$ with $X^if=0$, is an $\O$-linear combination of $w_k$, for $1\leq k\leq i$. 
Hence, $(\Ker(\pi),0)\subseteq F'_i$. Next we show that $(0,0,0,0,\Ker(\phi))\subseteq F'$. But it is clear from 
$(0,0,0,0,a_k)=z_k$, for $1\leq k\leq n$, and
\[
(0,0,0,0,b_k)=
\left\{\begin{array}{ll}
y_k \quad &\text{if $1\leq k\leq i$}, \\
y_k-x_{k-i+1} \quad &\text{if $i<k< n$}. 
\end{array}\right.
\]
Suppose that $(p,q,r,s,t)\in F_{i}$. Write $t=\beta b_n+t'$ such that $\beta\in\O$ and $t'\in \Ker(\phi)$. Then, to show that 
$(p,q,r,s,t)\in F'_{i}$, it is enough to see $(p,q,r,s,\beta b_n)\in F'_i$. Since
$$ \epsilon q+\epsilon Xr+X^{n-i}s=\beta X^{n-1}, $$
we have $(p,q,r,s-\beta X^{i-1})\in \Ker(\pi)$. Therefore, we deduce
$$ (p,q,r,s,\beta b_n)=(p,q,r,s-\beta X^{i-1},0)+\beta(0,0,0,X^{i-1},b_n)\in F'_i, $$
because $(0,0,0,X^{i-1},b_n)=y_n$.
\end{proof}

Let $F'_i$ be the $\O$-span of $\{ x_k, y_k, w_k \mid 1\leq k\leq n\}$, $F''_i$ the $\O$-span of $\{ z_k \mid 1\leq k\leq n\}$. 
It is easy to compute as follows.
\[\begin{array}{lll}
Xw_{k}&=&\left\{\begin{array}{ll}
         w_{k-1} \quad & \mathrm{if}\ k\neq i+1,  \\
         \epsilon w_{i} & \mathrm{if}\ k=i+1.\\
\end{array}\right.\\[10pt]
Xx_{k}&=&\left\{\begin{array}{ll}
          x_{k-1} & \mathrm{if}\ k\neq n-i+1,\\
          \epsilon x_{n-i}-w_{1}  & \mathrm{if}\ k=n-i+1.\\
\end{array}\right.\\[10pt]
Xy_{k}&=&\left\{\begin{array}{ll}
         y_{k-1}\quad & \mathrm{if}\ k\neq i+1, \\
         \epsilon y_{i}+x_{1}  & \mathrm{if}\ k=i+1.\\
         \end{array}\right.\\[10pt]
Xz_{k}&=&\left\{\begin{array}{ll}
         z_{k-1} \quad & \mathrm{if}\ k\neq  n-i+1,\\
         \epsilon z_{n-i} & \mathrm{if}\ k=n-i+1.\\
        \end{array}\right.\\[10pt]
\end{array}        
\]
Hence, the direct summands $F_{i}'$ and $F_{i}''$ of $F_{i}= F_{i}'\oplus F_{i}''$ are $A$-lattices and $F_{i}''\simeq Z_{n-i}$.

\begin{lemma}
\label{F-indec}
The middle term of the almost split sequence ending at $E_i$, for $2\leq i\leq n-2$, is the direct sum of 
$Z_{n-i}$ and an indecomposable direct summand.
\end{lemma}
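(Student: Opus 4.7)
The strategy is to adapt the argument of Proposition \ref{E-indec} to the lattice $F_i'$. Since the almost split sequence ending at $E_i$ was constructed under the hypothesis $2 \leq i \leq n-i$, I first handle this range; the complementary range $n-i < i \leq n-2$ will then follow from $F_i \simeq \tau(F_{n-i})$ by \cite[Chap.III, Prop.1.7, Prop.1.8]{A1}, exactly as in (case c) of the proof of Proposition \ref{E-indec}.

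Assume $2 \leq i \leq n-i$ and suppose for contradiction that $F_i' = F' \oplus F''$ with both summands nonzero. Since $A \otimes \K$ is local and $F_i' \otimes \K \simeq (A \otimes \K)^{\oplus 3}$, the ranks of $F'$ and $F''$ must be $n$ and $2n$ after a possible swap. I would choose $\O$-bases of $F'$ and $F''$ adapted to the filtrations $F' \cap \Ker(X^k)$ and $F'' \cap \Ker(X^k)$ (each a direct $\O$-summand because the quotient is torsion-free), expand the level-$k$ basis elements as $\O$-linear combinations of $x_k, y_k, w_k$, and propagate the leading coefficients through the recursive relations for $Xx_k, Xy_k, Xw_k$, paying particular attention to the $\epsilon$-jumps at $k = i+1$ and $k = n-i+1$.

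At the jump indices, divisibility by $\epsilon$ is forced on certain entries of the leading $3 \times 3$ coefficient matrices, in direct analogy with the three subcases of Proposition \ref{E-indec}. After performing suitable unit rescalings and a possible swap of $F'$ with $F''$, one should arrive either at an immediate contradiction with the mod-$\epsilon$ invertibility of some leading coefficient matrix, or at an explicit identification $F' \simeq Z_{n-i}$; in the latter case, the almost split sequence $0 \to E_{n-i} \to F_i \to E_i \to 0$ combined with Miyata's theorem \cite[Thm.1]{M} yields a splitting of the sequence, contradicting its definition. Either way, $F_i'$ must be indecomposable.

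The principal obstacle is the combinatorial bookkeeping at the two jump indices. With three distinguished basis families $(x_k), (y_k), (w_k)$ rather than the two families $(a_k), (b_k)$ appearing in Proposition \ref{E-indec}, the leading coefficient matrices become $3 \times 3$ and the number of subcases at each jump increases substantially; the most delicate point is to show that the rank-$n$ summand is actually isomorphic to $Z_{n-i}$, and not to some other rank-$n$ indecomposable $A$-lattice sharing the same rationalization (e.g.\ one of the $L_r$ from Lemma \ref{infinite lattices}).
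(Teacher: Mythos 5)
Your overall strategy --- reducing to $2\le i\le n-i$ by the $\tau$-symmetry, taking a filtration-adapted $\O$-basis of a hypothetical rank-$n$ summand of $F_i'$, and tracking the leading $3\times3$ coefficient matrices through the $\epsilon$-jumps at $k=i+1$ and $k=n-i+1$ --- is exactly what the paper does, and the two outcomes you foresee (immediate contradiction versus identification of the rank-$n$ summand) correspond to the paper's cases (1) and (2). However, there is a concrete gap in how you propose to close case (2). First, the rank-$n$ summand is identified as $Z_i$, not $Z_{n-i}$: after the cleaning change of basis $\{e''_k\}$, the surviving $\epsilon$-jump is $Xe''_{i+1}=\epsilon e''_i$, which is the signature of $Z_i$. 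More importantly, Miyata's theorem cannot finish the argument. Miyata requires the middle term to be isomorphic to the direct sum of the end terms, i.e.\ $F_i\simeq E_{n-i}\oplus E_i$. But from the existence of a rank-$n$ direct summand you only get $F_i\simeq Z_{n-i}\oplus Z_i\oplus L$ with $L$ of rank $2n$, and by Krull--Schmidt this is not $E_{n-i}\oplus E_i$, since $E_{n-i}$ and $E_i$ are indecomposable of rank $2n$ (Proposition~\ref{E-indec}). So no splitting contradiction arises this way.

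The paper instead closes case (2) by an Auslander--Reiten quiver argument: if $Z_i$ is a direct summand of $F_i$, there is an irreducible morphism $Z_i\to E_i$, so $E_i$ is a direct summand of the middle term $E_{n-i}$ of the almost split sequence ending at $\tau^-(Z_i)=Z_{n-i}$; as $E_{n-i}$ is indecomposable, this forces $E_i\simeq E_{n-i}$, and then an explicit computation (expand $a_n,b_n$ in a basis $a'_k,b'_k$ of $E_{n-i}$, apply $X^{n-i}$, and use $2\le i<n-i$ to force $\epsilon$-divisibility on entries that must be units) gives the contradiction. The boundary case $i=n-i$ requires its own separate computation, which the paper carries out with a different change of basis. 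You should replace the Miyata step with an argument along these lines; the rest of your plan, including the (correct) worry that one must pin down the rank-$n$ summand as a specific Heller lattice rather than some $L_r$, is on target.
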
        
\begin{proof}
Since $\tau(Z_i)\simeq Z_{n-i}$ implies $\tau(E_i)\simeq E_{n-i}$, we may assume $2\leq i \leq n-i$ without loss of generality. 
Let $F'_{i}$ be the $A$-lattice as above. Then we have to show that $F'_{i}$ is an indecomposable $A$-lattice. 
Suppose that $F'_{i}$ is not indecomposable. Then, there exist $A$-sublattices $Z$ and $L$ such that
$F'_{i}\simeq Z \oplus L$ and $Z\otimes\K\simeq A\otimes\K$. Since
$$ \Ker(X^{k})\cap F'_{i}=\bigoplus_{1\leq j\leq k}(\O w_{j}+\O x_{j}+\O y_{j}), $$
we may choose an $\O$-basis $\{e_{k} \mid 1\leq k\leq n \}$ of $Z$ such that 
$$ e_{k}=\alpha_{k}w_{k}+\beta_{k}x_{k}+\gamma_{k}y_{k}+A_{k}, $$
where $\alpha_k, \beta_k, \gamma_k\in \O$ with $(\alpha_{k},\beta_{k},\gamma_{k})\not\in (\epsilon\O)^{\oplus 3}$ and $A_{k}\in \Ker(X^{k-1})\cap L$. 
Then, 
$$ \Ker(X^{k})\cap Z=\O e_1\oplus \cdots \oplus \O e_k $$
and at least one of $\alpha_{k}, \beta_{k}, \gamma_{k}$ is invertible. Write 
$$ Xe_{k}=f^{(k)}_{k-1}e_{k-1}+\cdots+f^{(k)}_{1}e_{1}, $$
for $f^{(k)}_1,\dots, f^{(k)}_{k-1}\in \O$. We first assume that $2\leq i < n-i$. Note that
 \[X e_{k}=\left\{\begin{array}{ll}
         \alpha_{k}w_{k-1}+\beta_{k}x_{k-1}+\gamma_{k}y_{k-1}+XA_{k} & \mathrm{if}\ k\neq i+1,  n-i+1,\\
         \alpha_{n-i+1}w_{n-i}+\beta_{n-i+1}(\epsilon x_{n-i}-w_{1})+\gamma_{n-i+1}y_{n-i} +XA_{n-i+1} & \mathrm{if}\ k=n-i+1,\\ 
         \alpha_{i+1}\epsilon w_{i}+\beta_{i+1}x_{i}+\gamma_{i+1}(\epsilon y_{i}+x_{1}) +XA_{i+1} 
         & \mathrm{if}\ k=i+1.\\ 
        \end{array}\right.\]
Thus, we have 
\[f^{(k)}_{k-1}(\alpha_{k-1},\beta_{k-1},\gamma_{k-1})=
\left\{\begin{array}{ll}
         (\alpha_{k},\beta_{k},\gamma_{k})& \mathrm{if}\ k\neq i+1,  n-i+1,\\
         (\alpha_{n-i+1},\epsilon\beta_{n-i+1},\gamma_{n-i+1}) & \mathrm{if}\ k=n-i+1,\\ 
         (\epsilon \alpha_{i+1},\beta_{i+1},\epsilon\gamma_{i+1}) 
         & \mathrm{if}\ k=i+1.\\ 
        \end{array}\right.\] 
We may assume one of the following two cases occurs. 
\begin{enumerate}
\item $f^{(k)}_{k-1}=1\ (k\neq n-i+1)$, $f^{(n-i+1)}_{n-i}=\epsilon$.
\item $f^{(k)}_{k-1}=1\ (k\neq i+1)$, $f^{(i+1)}_{i}=\epsilon$.
\end{enumerate}
In fact, since at least one of $\alpha_{k}, \beta_{k}, \gamma_{k}$ is invertible, if $k\neq n-i+1, i+1$ then 
$f^{(k)}_{k-1}$ is invertible. We multiply its inverse to $e_k$, and we obtain
$$
f^{(2)}_1=\cdots=f^{(i)}_{i-1}=1 \;\; \text{and}\;\; (\alpha_1,\beta_1,\gamma_1)=\cdots=(\alpha_{i},\beta_{i},\gamma_{i})
$$
in the new basis. By the same reason, we have $f_{k-1}^{(k)}\not\in \epsilon^{2}\O$, for all $k$. Suppose that 
both $f^{(n-i+1)}_{n-i}$ and $f^{(i+1)}_{i}$ are invertible. Then, we may reach
$$
(\alpha_i,\beta_i,\gamma_i)=(\epsilon\alpha_{i+1},\beta_{i+1},\epsilon\gamma_{i+1})=\cdots
=(\epsilon\alpha_{n-i},\beta_{n-i},\epsilon\gamma_{n-i})=(\epsilon\alpha_{n-i+1},\epsilon\beta_{n-i+1},\epsilon\gamma_{n-i+1}),
$$
which is a contradiction. Suppose that both $f^{(n-i+1)}_{n-i}$ and $f^{(i+1)}_{i}$ are not invertible. Then,
\begin{align*}
(\alpha_i,\beta_i,\gamma_i)&=(\alpha_{i+1},\epsilon^{-1}\beta_{i+1},\gamma_{i+1})=\cdots=(\alpha_{n-i},\epsilon^{-1}\beta_{n-i},\gamma_{n-i})\\
&=(\epsilon^{-1}\alpha_{n-i+1},\epsilon^{-1}\beta_{n-i+1},\epsilon^{-1}\gamma_{n-i+1}),
\end{align*}
which implies that none of $\alpha_{n-i+1}, \beta_{n-i+1}, \gamma_{n-i+1}$ is invertible. Thus, we have proved that we are in 
case (1) or case (2). Suppose that we are in case (1). Then, we have
\[\begin{array}{lll}Xe_{k}-f^{(k)}_{k-1}e_{k-1}&=&f^{(k)}_{k-2}e_{k-2}+\cdots+f^{(k)}_{1}e_{1}\\[5pt]
&=&\left\{\begin{array}{ll}
XA_{k}-A_{k-1}& \mathrm{if}\ k\neq n-i+1, i+1,\\
XA_{n-i+1}-\epsilon A_{n-i}-\beta_{n-i+1}w_{1}& \mathrm{if}\ k=n-i+1,\\
XA_{i+1}-A_{i}+\gamma_{i+1}x_{1}& \mathrm{if}\ k=i+1.\\
\end{array}\right.\\
\end{array}
\]
Since $A_{k}\in \Ker(X^{k})\cap L$, we obtain that
\[Xe_{k}=\left\{\begin{array}{ll}
e_{k-1}& \mathrm{if}\ k\neq n-i+1,i+1,\\
\epsilon e_{n-i}+f_{1}^{(n-i+1)}e_{1} & \mathrm{if}\ k=n-i+1,\\
e_{i}+f^{(i+1)}_{1}e_{1} & \mathrm{if}\ k=i+1,\\
\end{array}\right.\\ 
\]
and $XA_{n-i+1}=X^{2}A_{n-i+2}=\cdots=X^{i}A_{n}$. As we are in case (1), 
\begin{align*}
(\alpha_1,\beta_1,\gamma_1)&=(\alpha_2,\beta_2,\gamma_2)=\cdots=(\alpha_{i},\beta_{i},\gamma_{i}) \\
&=(\epsilon\alpha_{i+1},\beta_{i+1},\epsilon\gamma_{i+1})=\cdots =(\epsilon\alpha_{n-i},\beta_{n-i},\epsilon\gamma_{n-i}) \\
&=(\alpha_{n-i+1},\beta_{n-i+1},\gamma_{n-i+1})=\cdots=(\alpha_n,\beta_n,\gamma_n),
\end{align*}
so that we may write
\[e_{k}=\left\{\begin{array}{ll}
\epsilon\alpha w_k + \beta x_k + \epsilon\gamma y_k + A_k &  \mathrm{if}\ 1\leq k\leq i \; \text{or}\; n-i+1\leq k\leq n,\\
\alpha w_k + \beta x_k + \gamma y_k + A_k & \mathrm{if}\ i+1 \leq k \leq n-i,\\
\end{array}\right.\\[5pt]
\]
with $\alpha, \gamma\in \O$ and $\beta\in\O^\times$. Then, $Xe_{n-i+1}=\epsilon e_{n-i}+f_1^{(n-i+1)}e_1$ implies
$$
\epsilon\alpha w_{n-i}+\beta(\epsilon x_{n-i}-w_1)+\epsilon\gamma y_{n-i}+X^iA_n
=\epsilon e_{n-i}+f_1^{(n-i+1)}(\epsilon\alpha w_1+\beta x_1+\epsilon\gamma y_1).
$$
We equate the coefficients of $w_1$ on both sides. Since contribution from $X^iA_n$ comes from $X^iw_{i+1}=\epsilon w_1$ only, 
we conclude that $\beta\in \epsilon\O$, which contradicts $\beta\in\O^\times$.

Suppose that we are in case (2). Then, the same argument as above shows that
\[Xe_{k}=\left\{\begin{array}{ll}
e_{k-1}  &  \mathrm{if}\ k\neq n-i+1,i+1,\\
e_{n-i}+f_{1}^{(n-i+1)}e_{1} & \mathrm{if}\ k=n-i+1,\\
\epsilon e_{i}+f^{(i+1)}_{1}e_{1} & \mathrm{if}\ k=i+1.\\
\end{array}\right.\\[5pt]
\]
We define an $\O$-basis $\{e''_{k}\}$ of $Z$ as follows:
\begin{itemize}
\item[(i)] $e''_{k}=e_{k}\ (1\leq k \leq i)$.
\item[(ii)] $e''_{n-i}=e_{n-i}-f_{1}^{(i+1)}e_{n-2i+1}+f_{1}^{(n-i+1)}e_{1}$.
\item[(iii)] $e''_{n-1}=e_{n-1}-f_{1}^{(i+1)}e_{n-i}-f_{1}^{(i+1)}f_{1}^{(n-i+1)}e_1$.
\item[(iv)] $e''_{k}=e_{k}-f_{1}^{(i+1)}e_{k-i+1}\ (i+1 \leq k\leq n,\ k\neq n-i,\,n-1)$.
\end{itemize}
Then, we have $Z\simeq Z_{i}$. To summarize, we have proved that if there is a direct summand of rank $n$ then it must be 
isomorphic to $Z_{i}$. As there is an irreducible morphism $Z_{i}\rightarrow E_{i}$, $E_{i}$ must be a direct summand 
of $E_{n-i}$ and we conclude $E_{i}\simeq E_{n-i}$. Then there exist $a'_k, b'_k\in E_{n-i}$, for $1\leq k\leq n$, such that
\begin{align*}
a_n&=\alpha a'_n+\beta b'_n +A,\\
b_n&=\gamma a'_n+\delta b'_n +B,
\end{align*}
where $\alpha, \beta, \gamma, \delta\in \O$ with $\alpha\delta-\beta\gamma\in \O^\times$, $A,B\in \Ker(X^{n-1})$, and 
\begin{align*}
Xa'_k&=\left\{\begin{array}{ll}
a'_{k-1}& (k\neq n-i+1)\\
\epsilon a'_{k-1}& (k=n-i+1),\\
\end{array}\right.\\
Xb'_k&=\left\{\begin{array}{ll}
b'_{k-1}& (k\neq i+1,n)\\
\epsilon b'_{k-1}& (k=i+1)\\
a'_{n-i}+b'_{n-1}& (k=n).\\
\end{array}\right.
\end{align*}
We compute $X^{n-i}a_n$ and $X^{n-i}b_n$ as follows. 
\begin{align*}
\epsilon a_{i}&=\epsilon(\alpha a'_{i}+\beta b'_{i})+\beta a'_1+X^{n-i}A,\\
\epsilon b_{i}&=\epsilon(\gamma a'_{i}+\delta b'_{i})+\delta a'_1+X^{n-i}B.
\end{align*}
Since $X^{n-i}A, X^{n-i}B\in \epsilon E_{n-i}$ by $2\leq i< n-i$, we have $\beta,\delta\in \epsilon\O$, which is a contradiction. 

Thus, $F'_{i}$ is indecomposable if $2\leq i<n-i$. It remains to consider $2\leq i=n-i$. We choose an $\O$-basis $\{ e_k \mid 1\leq k\leq n\}$ of $Z$ and write 
$$ e_k=\alpha_{k}w_k+\beta_{k}x_k+\gamma_{k}y_k + A_k, $$
as before. Then, we have
\[X e_{k}=\left\{\begin{array}{ll}
         \alpha_{k}w_{k-1}+\beta_{k}x_{k-1}+\gamma_{k}y_{k-1}+XA_{k} & \mathrm{if}\ k\neq i+1,\\
         \alpha_{i+1}\epsilon w_{i}+\beta_{i+1}(\epsilon x_{i}-w_{1})+\gamma_{i+1}(\epsilon y_{i}+x_{1}) +XA_{i+1} 
         & \mathrm{if}\ k=i+1,\\ 
        \end{array}\right.\]
and it follows that
\[f^{(k)}_{k-1}(\alpha_{k-1},\beta_{k-1},\gamma_{k-1})=
\left\{\begin{array}{ll}
         (\alpha_{k},\beta_{k},\gamma_{k})& \mathrm{if}\ k\neq i+1, \\
   (\epsilon \alpha_{i+1},\epsilon\beta_{i+1},\epsilon\gamma_{i+1}) 
         & \mathrm{if}\ k=i+1.\\ 
        \end{array}\right.\] 
Hence, we may assume $f^{(k)}_{k-1}=1$, for $k\neq i+1$, and $f^{(i+1)}_{i}=\epsilon$, without loss of generality.
Since $A_{k}\in \Ker(X^{k-1})\cap L $, we obtain from the computation of $Xe_k-f^{(k)}_{k-1}e_{k-1}$ that
\[Xe_{k}=\left\{\begin{array}{ll}
e_{k-1}& \mathrm{if}\ k\neq i+1,\\
\epsilon e_{i}+f^{(i+1)}_{1}e_{1}& \mathrm{if}\ k=i+1,\\
\end{array}\right.\\ 
\]
and $XA_{i+1}=X^2A_{i+2} =\cdots=X^iA_{n}$. Let $\lambda$, $\mu$, $\nu$ be the
coefficient of $w_{n-i+1}$, $x_{n-i+1}$, $y_{n-i+1}$ in $A_{n}$, respectively. Then the coefficient of $w_1$, $x_1$, $y_1$ in 
$XA_{i+1}$ are $\epsilon \lambda$, $\epsilon\mu$, $\epsilon\nu$. Since
$f^{(i+1)}_{1}e_1=XA_{i+1}-\epsilon A_{i}-\beta_{i+1}w_1+\gamma_{i+1}x_1$, we have
$$
f^{(i+1)}_1\alpha_1\equiv -\beta_{i+1} \mod \epsilon\O, \;\; f^{(i+1)}_1\beta_1\equiv \gamma_{i+1} \mod \epsilon\O, \;\;
f^{(i+1)}_1\gamma_1\equiv 0 \mod \epsilon\O. 
$$
We may show that $f^{(i+1)}_{1}$ is not invertible, but whenever it is invertible or not, 
$$
\gamma_{1}=\gamma_{2}=\cdots=\gamma_{n}\;\;\text{and}\;\;
\beta_{1}=\beta_{2}=\cdots=\beta_{n}
$$
imply that $\beta_k\equiv 0\mod \epsilon\O$ and $\gamma_k\equiv 0\mod \epsilon\O$, for $1\leq k\leq n$. 
It follows that we may choose an $\O$-basis $\{ a'_{k}, b'_{k} \mid 1\leq k\leq n \}$ of $L$ in the following form. 
\begin{align*}
a'_{k}&= \lambda'_{k}w_{k}+x_{k}+A'_{k}, \\
b'_{k}&= \lambda''_{k}w_{k}+y_{k}+B'_{k},
\end{align*}
where $\lambda', \lambda''\in \O$ and $A'_{k},B'_{k}\in \Ker(X^{k-1})\cap Z$. Write
$$ Xa'_{k}=\sum_{j=1}^{k-1}(g^{(k)}_{j}a'_{j}+h^{(k)}_{j}b'_{j}). $$
Multiplying $a'_{k}= \lambda'_{k}w_{k}+x_{k}+A'_{k}$ with $X$, we obtain
\[Xa'_{k}=\left\{
\begin{array}{ll}
\lambda_{k}'w_{k-1}+x_{k-1}+XA'_{k}&\mathrm{if}\ k\neq i+1,\\
\epsilon\lambda_{i+1}'w_{i}+\epsilon x_{i}-w_{1}+XA'_{i+1}& \mathrm{if\ }k=i+1.\\
\end{array}\right.
\]
Thus, $g_{k-1}^{(k)}=1$, for $k\neq i+1$, $g_{i}^{(i+1)}=\epsilon$, and $h_{k-1}^{(k)}=0$, for all $k$. Further, we have
\[Xa_{k}'-g_{k-1}^{(k)}a'_{k-1}=
\left\{\begin{array}{ll}
XA_{k}'-A_{k-1}'& \mathrm{if}\ k\neq i+1,\\
XA_{i+1}'-\epsilon A_{i}'-w_{1}& \mathrm{if}\ k=i+1.
\end{array}\right.\]
We obtain $Xa'_{k}-a'_{k-1}=0$ if $k\neq i+1$, and if $k=i+1$ then $Xa'_{i+1}-\epsilon a'_{i}$ is equal to
\[g_{1}^{(i+1)}a'_{1}+h_{1}^{(i+1)}b'_{1}=XA'_{i+1}-\epsilon A'_{i}-w_{1}.\]
Since $XA_{i+1}'=X^{2}A_{i+2}'=\cdots =X^{n-i}A_{n}',$ the coefficient of $x_{1}$ in $XA'_{i+1}$ is in $\epsilon \O$. Thus,
$$
(\lambda'_1g^{(i+1)}_1+\lambda''_1h^{(i+1)}_1+1)w_1+g^{(i+1)}_1x_1+h^{(i+1)}_1y_1\equiv 0\mod \epsilon F_i'.
$$
We must have $g_{1}^{(i+1)}, h_{1}^{(i+1)}\in \epsilon \O$, but then $w_1\equiv  0\mod \epsilon F_i'$, which is impossible. 
Hence, $F'_i$ is indecomposable if $2\leq n-i=i$. 
\end{proof}

\section{Main result}

In this section, we prove the main result of this article. 

\begin{theorem}
Let $\O$ be a complete discrete valuation ring, $A=\O[X]/(X^n)$, for $n\geq 2$. Then, 
the component of the stable Auslander-Reiten quiver of $A$ which contains $Z_i$ and $Z_{n-i}$ is $\Z A_\infty/\langle \tau^2\rangle$ if 
$2i\ne n$, and $\Z A_\infty/\langle \tau\rangle$ i.e. homogeneous tube if $2i=n$.
\end{theorem}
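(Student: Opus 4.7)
The plan is to apply Theorem~\ref{infinite tree class theorem} to the component $C$ containing $Z_i$ and then use the local shape of $C$ around $Z_i$ and $E_i$, as established in the previous subsections, to pin down both the tree class of $C$ and the admissible subgroup acting on its universal cover. First, $\tau(Z_i)=Z_{n-i}$ gives $\tau^{2}(Z_i)=Z_i$, so $Z_i$ is $\tau$-periodic, and Lemma~\ref{infinite lattices} ensures the stable Auslander-Reiten quiver has infinitely many vertices. Hence Theorem~\ref{infinite tree class theorem} applies, $C$ is a valued stable translation quiver with infinitely many vertices, so by the Riedmann structure theorem $C\simeq \Z T/G$ for some infinite tree $T$ and admissible subgroup $G\subseteq\mathrm{Aut}(\Z T)$, and by Lemma~\ref{tree class}(1)(ii), $T$ is an infinite Dynkin diagram.

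To identify $T$, I read off the local shape of $C$ from the almost split sequences constructed earlier. By Proposition~\ref{E-indec}, the middle term of the almost split sequence ending at $Z_i$ has exactly one non-projective indecomposable summand (namely $E_i$ for $2\le i\le n-1$, while for $i=1$ the middle term $A\oplus E'_1$ contributes only $E'_1$ to the stable quiver); hence the $\tau$-orbit of $Z_i$ has a unique neighboring orbit in $T$, i.e.\ it is a leaf, which rules out $A_\infty^\infty$. Counting multiplicities of indecomposable summands in the relevant middle terms (Proposition~\ref{E-indec} and Lemma~\ref{F-indec} applied with $n-i$ in place of $i$) gives valuation $(1,1)$ on the arrow $E_i\to Z_i$, which excludes $B_\infty$ and $C_\infty$. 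Finally, Lemma~\ref{F-indec} shows that the middle term of the almost split sequence ending at $E_i$ decomposes as $Z_{n-i}\oplus F'_i$ with two distinct non-projective indecomposable summands, so the orbit of $E_i$ (the unique neighbor of the leaf in $T$) has degree exactly two; in $D_\infty$ the vertex adjacent to a leaf has degree three, so $T\ne D_\infty$, and therefore $T=A_\infty$.

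It remains to determine $G$. Since $\mathrm{Aut}(\Z A_\infty)$ is generated by $\tau$, the admissible subgroup $G$ is of the form $\langle\tau^{m}\rangle$ for some $m\ge1$, and every vertex of $C=\Z A_\infty/\langle\tau^{m}\rangle$ has $\tau$-period exactly $m$. Applied to $Z_i$: if $2i=n$, then $\tau(Z_i)=Z_i$, so $m=1$ and $C\simeq\Z A_\infty/\langle\tau\rangle$, a homogeneous tube; if $2i\ne n$, then $\tau(Z_i)=Z_{n-i}\ne Z_i$ while $\tau^{2}(Z_i)=Z_i$, so $m=2$ and $C\simeq\Z A_\infty/\langle\tau^{2}\rangle$. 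The only delicate point is the exclusion of $D_\infty$, which is handled cleanly by the degree-two count at the orbit of $E_i$.
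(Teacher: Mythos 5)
Your proposal follows essentially the same route as the paper in the range $2\le i\le n-2$: use Theorem~\ref{infinite tree class theorem} to get a valued stable translation quiver with infinite Dynkin tree class, Proposition~\ref{E-indec} to place the orbit of $Z_i$ on the boundary (ruling out $A_\infty^\infty$), Lemma~\ref{F-indec} to count direct summands of $F_i$ and eliminate $B_\infty$, $C_\infty$, $D_\infty$, and finally the $\tau$-period of $Z_i$ to pin down $G=\langle\tau^m\rangle$. The paper phrases the middle step as ``if the tree class were $B_\infty$, $C_\infty$ or $D_\infty$ then $F_i$ or $F_{n-i}$ would have at least three indecomposable summands,'' while you split it into a valuation argument and a degree argument, but the content is the same.

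However, there is a genuine gap for $i=1$ and $i=n-1$ (and in particular for $n=2,3$, where the interval $2\le i\le n-2$ is empty). Both your valuation argument and your degree argument invoke Lemma~\ref{F-indec}, which is proved only for $2\le i\le n-2$. For the component containing $Z_1$ and $Z_{n-1}$ the almost split sequence ending at $E'_1$ (the stable summand of $E_1$, with $\tau(E'_1)\simeq E_{n-1}$) is never computed in the paper, so you have no control over the valuation on the boundary arrow or the degree of the orbit of $E'_1$, and $B_\infty$, $C_\infty$, $D_\infty$ are not excluded. The paper handles this case by an entirely different mechanism: since $E_1$ has the projective summand $A$, the rank-based subadditive function on $C$ is strictly subadditive rather than additive at the orbit of $Z_1$, and Lemma~\ref{tree class}(2) then forces the tree class to be $A_\infty$. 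Your proof needs this (or some replacement argument) to cover $i=1,n-1$.
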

\begin{proof}
Let $C$ be a component of the stable Auslander-Reiten quiver of $A$ that contains a Heller lattice and 
we apply Theorem \ref{infinite tree class theorem} to $C$. If there exists a loop in the component, 
Proposition \ref{E-indec} implies that the endpoint must be a Heller lattice. However, Heller lattices have no loops. 
Thus, $C$ is a valued stable translation quiver and its tree class is one of $A_\infty, B_\infty, C_\infty, D_\infty$ and $A^\infty_\infty$.
If $i=1$ or $i=n-1$, then Proposition \ref {E-indec}(1) implies that the subadditive function considered in the proof of Lemma\;\ref{no loop lemma} 
is not additive. Thus, the tree class of $C$ is $A_{\infty}$. We now assume that $i\neq 1,n-1$. Proposition \ref{E-indec}(2) implies that 
the Heller lattices $Z_{i}$ and $Z_{n-i}$ are on the boundary of the stable Auslander-Reiten quiver, and the tree class can not be $A_{\infty}^{\infty}$.  
If the tree class was one of $B_{\infty}$, $C_{\infty}$ and $D_{\infty}$, then $F_{i}$ or $F_{n-i}$ would have at least
three indecomposable direct summands. But it contradicts Lemma\;\ref{F-indec}. Therefore, the tree class is $A_{\infty}$. Then, 
the component $C$ must be a tube, and the rank is the period of the Heller lattices $Z_i$ and $Z_{n-i}$, which is two if $n-i\neq i$, 
one if $n-i=i$.
\end{proof}

\section{Appendix} 

In this appendix, we prove Proposition \ref{construction of AR-seq}. The proof uses arguments from \cite{Bu} and \cite{R1}. 
As it is clear that (1) implies (2), we show that (2) implies (1). 
Let us consider the injective resolution of $\O$ as an $\O$-module:
$$
0  \longrightarrow \O \xrightarrow{\ \iota\ } \K  \xrightarrow{\ d\ } \K/\O  \longrightarrow 0.
$$
Since $\Ext_{\O} ^1(X,\O)=0$ for any free $\O$-modules of finite rank $X$, we have
\[  0 \longrightarrow \Hom _{\O}(X,\O) \longrightarrow \Hom_{\O}(X,\K)  \longrightarrow \Hom _{\O}(X,\K/\O)  \longrightarrow 0. \]
In particular, if we define functors $D'=\Hom _{\O}(-,\K)$ and $D''=\Hom _{\O}(-,\K/\O)$, then we have the short exact sequence
 \[ 0\longrightarrow D(\Hom _A(M, -))\longrightarrow D'(\Hom _A(M, -)) \longrightarrow D''(\Hom _A(M, -)) \longrightarrow 0, \]
 for any $A$-lattice $M$. We define functors 
$$ \nu'=D'\Hom _A(-,A), \quad \nu''=D''\Hom_A(-,A), $$
which we also call Nakayama functors. 
Applying the Nakayama functors $\nu,\nu',\nu''$ to $M$, we obtain the following exact sequences
 \[ 0 \longrightarrow \nu(M) \longrightarrow \nu'(M) \longrightarrow \nu''(M) \longrightarrow 0,\]
and $0 \longrightarrow \Hom_A(-,\nu(M)) \longrightarrow \Hom_A(-,\nu'(M)) \longrightarrow \Hom_A(-,\nu''(M))$. 
Let $\lambda$ be the functorial isomorphism defined by
 \begin{align*} D(\Hom _A(M,A)\otimes _A -)& =\Hom _{\O}(\Hom _A(M,A)\otimes _A - ,\O)\\
  & \simeq \Hom _A(-, \Hom _{\O}(\Hom_A(M,A),\O)) \\
  & = \Hom _{A}(-,\nu (M)). \end{align*}
We define $\lambda'$ and $\lambda''$ in the similar manner by replacing $\nu$ with $\nu'$ and $\nu''$. Let
 \[ \mu _{M} : \Hom _A(M,A)\otimes _{A}-\longrightarrow \Hom _A(M,-) \] 
be the natural transformation defined by $\phi \otimes x \mapsto (m\mapsto \phi(m)x)$. Then, it induces the following three morphisms of functors
\begin{align*} 
D\mu _{M}\; :&\;\; D\Hom _A(M, -) \longrightarrow D(\Hom _A(M,A)\otimes _A -), \\
D'\mu _{M}\; :&\;\; D'\Hom _A(M, -) \longrightarrow D'(\Hom _A(M,A)\otimes _A -), \\
D''\mu _{M}\; :&\;\; D''\Hom _A(M, -)\longrightarrow D''(\Hom _A(M,A)\otimes _A -). 
\end{align*}
Then, we have the following commutative diagram of functors on $A$-lattices. 

\medskip
\hspace{5mm}
\begin{xy}
(-5,15) *{0}="01",(20,15)*{D\Hom _A(M,-)}="L",(60,15)*{D'\Hom _A(M,-)}="E", (100,15)*{D''\Hom _A(M,-)}="M",(125,15)*{0}="02",
(-5,0)*{0}="03",(20,0)*{\Hom _A(-,\nu(M))}="L2",(60,0)*{\Hom _A(-,\nu '(M))}="nP", (100,0)*{\Hom _A(-,\nu ''(M))}="nM",
,(5,0)*{}="d0",(35,15)*{}="d2",(45,15)*{}="d1",(35,0)*{}="d3",(45,0)*{}="d4",(74,15)*{}="d5",(86,15)*{}="d6",(85,0)*{}="d7",
\ar "01";"L"
\ar "d2";"d1"
\ar "d5";"d6"^{d_*}
\ar "M";"02"
\ar "03";"d0"
\ar "d3";"d4"^{\iota_*}
\ar "nP";"d7"
\ar "L";"L2"^{\lambda \circ D\mu _{M}}
\ar "E";"nP"^{\lambda '\circ D'\mu _{M} }
\ar "M";"nM"^{\lambda ''\circ D''\mu _{M}}
\end{xy}

\bigskip
\noindent
with exact rows, where $\iota_*$ and $d_*$ are given by compositions of $\iota$ and $d$ on the left.

\begin{lemma} Let $X$ be an $A$-lattice. If $M\otimes\K$ is a projective $A\otimes\K$-module, then 
\begin{itemize}
\item[(i)] $D'\mu _{M}(X)$ is an isomorphism and natural in $X$,
\item[(ii)] $D\mu _{M}(X)$ is a monomorphism and natural in $X$,
\item[(iii)] if $M$ is a projective $A$-module, then $D\mu _{M}(X)$ is an isomorphism,
\item[(iv)] $D''\mu _{M}(X)$ is an epimorphism and natural in $X$.
\end{itemize}
Moreover, the sequence 
\[
D\Hom _A(M, X) \xrightarrow{\ \lambda\circ D\mu _{M}(X) \ } \Hom_A(X ,\nu(M))
 \xrightarrow{d_*\circ(\lambda '\circ D'\mu _{M}(X))^{-1}\circ\iota_*} D''\Hom _A(M ,X)
\]
is exact.
\end{lemma}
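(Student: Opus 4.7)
My plan is to exploit the commutative diagram with short exact top row, left exact bottom row, and middle vertical that I will show is an isomorphism, then invoke the snake lemma and direct chases for the rest. The hypothesis that $M\otimes\K$ is projective over $A\otimes\K$ is used only to obtain this middle vertical isomorphism.

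First I observe that the top row is short exact. For $A$-lattices $M,X$, the $\O$-module $\Hom_A(M,X)$ embeds into the free module $\Hom_\O(M,X)$, so it is itself free of finite rank over the PID $\O$; hence $\Ext^1_\O(\Hom_A(M,X),\O)=0$, and applying $\Hom_\O(\Hom_A(M,X),-)$ to the injective resolution $0\to\O\to\K\to\K/\O\to 0$ gives the required short exact sequence. For (iii), $\mu_A(X)$ is the canonical isomorphism $A\otimes_A X\to X=\Hom_A(A,X)$; by additivity and passage to direct summands of $A^n$, $\mu_M(X)$ is an isomorphism for every projective $A$-lattice $M$, and $D$ preserves isomorphisms. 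For (i), I tensor $\mu_M(X)$ with the flat $\O$-module $\K$: the result is $\mu_{M\otimes\K}(X\otimes\K)$, an isomorphism by (iii) applied to the algebra $A\otimes\K$ (where $M\otimes\K$ is projective by hypothesis). Since $D'\mu_M(X)\simeq\Hom_\K(\mu_M(X)\otimes\K,\K)$ via the standard adjunction $\Hom_\O(-,\K)\simeq\Hom_\K(-\otimes_\O\K,\K)$, it is the $\K$-dual of an isomorphism, hence itself an isomorphism.

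For (ii) and the exactness of the displayed sequence, I apply the snake lemma to the diagram with short exact top row and middle vertical $\lambda'\circ D'\mu_M(X)$ an isomorphism (by (i)). The vanishing of the kernel of the middle vertical forces $\ker(\lambda\circ D\mu_M(X))=0$, which gives (ii). A direct chase then shows that at the middle term $\Hom_A(X,\nu(M))$ an element $y$ lies in the image of $\lambda\circ D\mu_M(X)$ iff $(\lambda'\circ D'\mu_M(X))^{-1}\iota_*(y)$ lies in the image of the first top row arrow $D\Hom_A(M,X)\to D'\Hom_A(M,X)$, iff $d_*(\lambda'\circ D'\mu_M(X))^{-1}\iota_*(y)=0$ by top row exactness; this establishes the claimed exact sequence.

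For (iv), I use that $D''$ is exact contravariant and $\K/\O$ is an injective cogenerator of the category of $\O$-modules, so $D''$ is faithful; surjectivity of $D''\mu_M(X)$ is then equivalent to injectivity of $\mu_M(X)$. I would prove injectivity by taking a projective presentation $P\to M$ of $A$-lattices, comparing $\mu_M(X)$ with the isomorphism $\mu_P(X)$ in a commutative square, and identifying $\ker\mu_M(X)$ with a $\operatorname{Tor}^A_1$-term involving the Auslander transpose of $M$; projectivity of $M\otimes\K$ together with the Gorenstein structure of $A$ should force this Tor to vanish. This is the main obstacle of the lemma: the abstract snake lemma alone does not produce surjectivity of $D''\mu_M(X)$, and a finer module-theoretic analysis of $\Hom_A(M,A)\otimes_A X$ is required.
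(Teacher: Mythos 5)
Your treatment of (i), (ii), (iii) and of the displayed exact sequence is correct and is in substance the paper's own argument. The single input from the hypothesis is that $\mu_M(X)\otimes\K$ identifies with $\mu_{M\otimes\K}(X\otimes\K)$, which is an isomorphism because $M\otimes\K$ is projective over $A\otimes\K$; hence $\Ker(\mu_M(X))$ and $\Coker(\mu_M(X))$ are $\O$-torsion. The paper phrases this as ``$\Coker(\mu_M(X))$ is torsion, so $D'$ kills it,'' while you phrase it as ``$D'$ factors through $-\otimes\K$''; the paper deduces (ii) directly from $D(\Coker(\mu_M(X)))=0$ rather than from the commutative diagram, but these are cosmetic differences, and your chase for the exactness assertion is exactly what the paper means by ``chasing the diagram above.''

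The genuine gap is (iv), which you have located precisely but not closed, and the completion you propose cannot work. As you observe, since $\K/\O$ is an injective cogenerator one has $\Coker(D''\mu_M(X))\simeq D''(\Ker\mu_M(X))$, so (iv) is equivalent to injectivity of $\mu_M(X)$, i.e.\ to the vanishing of a $\operatorname{Tor}$ of the transpose of $M$ against $X$. That vanishing fails in the paper's generality. Take $A=\O[x]/(x^2)$ and $M=X=Z_1=\O\epsilon\oplus\O x\subset A$, so that $M\otimes\K\simeq A\otimes\K$ is projective and $\Hom_A(M,A)\simeq Z_1$ with basis $u=\epsilon$, $v=x$ satisfying $ux=\epsilon v$, $vx=0$. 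The relations $zx\otimes w=z\otimes xw$ give $\epsilon(v\otimes v)=0$ and $\epsilon(v\otimes u-u\otimes v)=0$, so $\Hom_A(M,A)\otimes_AX\simeq\O^{2}\oplus(\O/\epsilon\O)^{2}$; since $\Hom_A(M,X)=\End_A(Z_1)$ is $\O$-free, the torsion summand lies in $\Ker\mu_M(X)$ (indeed $\mu(v\otimes v)=0$). Dually, $D''\Hom_A(M,X)\simeq(\K/\O)^{2}$ is divisible while $D''(\Hom_A(M,A)\otimes_AX)$ has a nonzero non-divisible summand, so $D''\mu_M(X)$ cannot be an epimorphism. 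You should therefore not expect to prove (iv) as stated; note, however, that the paper's own justification of (iv) is only the phrase ``the proof of (iv) is similar,'' and that (iv) is never invoked afterwards --- only (i), (iii) and the exact sequence enter the proofs of Lemma \ref{exact seq} and Proposition \ref{construction of AR-seq} --- so the portion of the lemma you have actually established is the portion the paper needs.
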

\begin{proof}
Observe that we have an isomorphism
\[ \Hom _{A\otimes\K}(M\otimes\K, A\otimes\K)\otimes_{A} X \simeq \Hom _{A\otimes\K}(M\otimes\K , X\otimes\K), \]
since $M\otimes\K$ is a projective $A\otimes\K$-module. Thus, 
$\Coker(\mu _{M}(X))$ is a torsion $\O$-module and $D'\Coker(\mu _{M}(X))=0$. Then,
$$ 0 \to D'\Hom _A(M,X) \xrightarrow{D'\mu _{M}(X)} D'(\Hom_ A(M,A)\otimes _{A} X) \to \Ext _A ^1(\Coker(\mu _{M}(X)),\K)=0, $$
proving (i). As $\Coker(\mu _{M}(X))$ is a torsion $\O$-module, (ii) also follows. 
The proof of (iii) is the same as (i). The proof of (iv) is similar. By chasing the diagram above, (i) implies the 
exact sequence. 
\end{proof}

\begin{lemma}
\label{exact seq}
Let $M$ be an $A$-lattice, $p:P\to M$ the projective cover, and we define 
$$ L=D(\Coker(\Hom _A(p,A))). $$
Then, we have the following exact sequence of functors.
$$ 0 \longrightarrow D\Hom_A(M,-)\xrightarrow{\lambda\circ D\mu _{M}(-)}\Hom _A(-,\nu(M)) \longrightarrow \Ext _A^1(-,L)\longrightarrow 0. $$
\end{lemma}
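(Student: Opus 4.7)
The plan is to combine the short exact sequence $0\to L\to\nu(P)\xrightarrow{\nu(p)}\nu(M)\to 0$ of Lemma \ref{first exact sequence} with the functorial exact sequence established in the preceding lemma. Injectivity of $\lambda\circ D\mu_M(X)$ is already part (ii) of that lemma, so the real task is to identify its cokernel with $\Ext^1_A(X,L)$.

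First I would verify that $\Ext^1_A(X,\nu(P))=0$ for every $A$-lattice $X$ and every finitely generated projective $A$-module $P$. Since $\nu(P)$ is a direct summand of $\nu(A^{k})=D(A)^{k}$, it suffices to show $\Ext^1_A(X,DA)=0$. Adjunction gives a natural isomorphism $\Hom_A(-,DA)\simeq D(-)$ on $A$-lattices, and any projective resolution of $X$ by finitely generated projective $A$-modules has syzygies that are again $A$-lattices (submodules of lattices over the DVR $\O$); applying $D=\Hom_\O(-,\O)$, which is exact on short exact sequences of $\O$-lattices, then produces the required vanishing.

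Next, I would apply $\Hom_A(X,-)$ to the short exact sequence of Lemma \ref{first exact sequence} and use the vanishing just established to obtain the exact sequence
\[ 0\to\Hom_A(X,L)\to\Hom_A(X,\nu(P))\xrightarrow{\Hom_A(X,\nu(p))}\Hom_A(X,\nu(M))\to\Ext^1_A(X,L)\to 0, \]
which is natural in $X$.

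Finally, I would invoke naturality of $\mu$ in its first argument at $p\colon P\to M$ to form a commutative square whose horizontal arrows are $\lambda\circ D\mu_P(X)$ and $\lambda\circ D\mu_M(X)$, and whose vertical arrows are $D\Hom_A(p,X)$ and $\Hom_A(X,\nu(p))$. Two observations then close the argument: $\lambda\circ D\mu_P(X)$ is an isomorphism by part (iii) of the preceding lemma, and the cokernel of $\Hom_A(p,X)\colon\Hom_A(M,X)\hookrightarrow\Hom_A(P,X)$ embeds in the $\O$-lattice $\Hom_A(\Ker p,X)$, hence is $\O$-free, so $D\Hom_A(p,X)$ is surjective. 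The square therefore forces $\Im(\lambda\circ D\mu_M(X))=\Im(\Hom_A(X,\nu(p)))$, and together with the injectivity of $\lambda\circ D\mu_M(X)$ this identifies the cokernel as $\Ext^1_A(X,L)$, yielding the claimed functorial exact sequence. The main obstacle is the bookkeeping that ensures each relevant cokernel is $\O$-free, so that $D$ preserves exactness at every step.
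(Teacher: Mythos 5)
Your proposal is correct and takes essentially the same route as the paper: apply $\Hom_A(X,-)$ to $0\to L\to\nu(P)\to\nu(M)\to 0$, use the vanishing of $\Ext^1_A(X,\nu(P))$, and then identify $\Im(\Hom_A(X,\nu(p)))$ with $\Im(\lambda\circ D\mu_M(X))$ via the naturality square at $p$, the isomorphism $\lambda\circ D\mu_P(X)$, and the surjectivity of $D\Hom_A(p,X)$ coming from the $\O$-freeness of $\Coker(p^*)$. The only difference is that you spell out why $\Ext^1_A(X,\nu(P))=0$, which the paper simply asserts by calling $\nu(P)$ an injective $A$-lattice.
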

\begin{proof}
We recall the short exact sequence
\[ 0\to L\longrightarrow \nu (P) \longrightarrow \nu (M) \longrightarrow 0. \]
Applying the functor $\Hom _A(X,-)$, for an $A$-lattice $X$, we obtain
\[ \Hom _A(X, \nu ( P ))  \longrightarrow \Hom _A(X,\nu(M)) \longrightarrow \Ext _A^1(X,L) \longrightarrow \Ext _A^1(X,\nu ( P ))=0, \]
since $\nu( P )$ is an injective $A$-lattice. Thus, we have the following diagram with exact rows:

\bigskip
\hspace{10mm}
\begin{xy}
(20,15)*[o]+{\Hom _A(X,\nu ( P ))}="L",(60,15)*[o]+{\Hom _A(X,\nu(M))}="E", (100,15)*[o]+{\Ext _A^1(X,L)}="M",(120,15)*[o]+{0}="02",
(-5,0)*[o]+{0}="03",(20,0)*[o]+{D\Hom _A(M,X)}="L2",(60,0)*[o]+{\Hom _A(X,\nu(M))}="nP", (100,0)*[o]+{D''\Hom _A(M,X)}="nM",
,(5,0)*[o]+{}="d0",(35,15)*[o]+{}="d2",(45,15)*[o]+{}="d1",(35,0)*[o]+{}="d3",(45,0)*[o]+{}="d4",(75,15)*[o]+{}="d5",(89,15)*[o]+{}="d6",(85,0)*[o]+{}="d7",(111,15)*[o]+{}="d8",
\ar "d2";"d1"^{\nu(p)_*}
\ar "d5";"d6"
\ar "d8";"02"
\ar "03";"d0"
\ar "d3";"d4"
\ar "nP";"d7"
\ar @<0.5mm>@{-}"E";"nP"
\ar @<-0.5mm>@{-}"E";"nP"
\end{xy}

\medskip
\noindent
We show that $\nu(p)_*$ factors through $\lambda\circ D\mu_{M}(X):D\Hom _A(M,X)\to \Hom _A(X,\nu(M))$. Consider the commutative diagram

\medskip
\hspace{20mm}
\begin{xy}
(0,15)*[o]+{\Hom _A(M,A)\otimes _AX}="L",(60,15)*[o]+{\Hom _A(P,A)\otimes _{A}X}="E",
(0,0)*[o]+{\Hom _A(M,X)}="03",(60,0)*[o]+{\Hom _A(P,X).}="L2",(42,15)*[o]+{}="d2",(47,0)*[o]+{}="d1",(20,15)*[o]+{}="d3",(15,0)*[o]+{}="d4"
\ar "d3";"d2"^{p^*\otimes{\rm id}_X}
\ar "d4";"d1"^{p^*}
\ar "L";"03"^{\mu _M(X)}
\ar "E";"L2"^{\mu _P(X)}
\end{xy}

\medskip
\noindent
By dualizing the diagram, we obtain the commutative diagram

\medskip
\hspace{20mm}
\begin{xy}
(0,15)*[o]+{\Hom _A(X,\nu(M))}="L",(60,15)*[o]+{\Hom _A(X,\nu( P ))}="E",
(0,0)*[o]+{D\Hom _A(M,X)}="03",(60,0)*[o]+{D\Hom _A(P,X),}="L2",(42,15)*[o]+{}="d2",(45,0)*[o]+{}="d1",(16,15)*[o]+{}="d3",(16,0)*[o]+{}="d4"
\ar "d2";"d3"_{\nu(p)_*}
\ar "d1";"d4"_{Dp^*}
\ar "03";"L"_{\lambda\circ D(\mu _M(X))}
\ar "L2";"E"_{\lambda\circ D(\mu _P(X))}
\end{xy}

\medskip
\noindent
and $\lambda\circ D(\mu _P(X))$ is an isomorphism. Therefore, $\nu(p)_*$ factors through $D\Hom _A(M,X)$. 
Since $\Coker(p^*)$ is an $\O$-submodule of $ \Hom _A(\Ker( p ),X)$, it is a free $\O$ module of finite rank and
$\Ext _{\O} ^1(\Coker(p^*),\O)=0$. It follows that $Dp^*$ is an epimorphism. 
This implies that $\Im(\nu(p)_*)=\Im(\lambda\circ D(\mu _M(X)))$, and we get the desired exact sequence.
\end{proof} 

By Lemma \ref{exact seq}, we have the commutative diagram 
$$\begin{xy}
(-5,15)*[o]+{0}="01",(20,15)*[o]+{\operatorname{Im}((\nu(p)_*)}="L",(60,15)*[o]+{\Hom _A(X,\nu(M))}="E", (100,15)*[o]+{\Ext _A^1(X,L)}="M",(120,15)*[o]+{0}="02",
(-5,0)*[o]+{0}="03",(20,0)*[o]+{\operatorname{Im}(\lambda\circ D(\mu _M(X)))}="L2",(60,0)*[o]+{\Hom _A(X,\nu(M))}="nP", (100,0)*[o]+{D''\Hom _A(M,X),}="nM",
(3,0)*[o]+{}="d0",(35,15)*[o]+{}="d2",(45,15)*[o]+{}="d1",(38,0)*[o]+{}="d3",(45,0)*[o]+{}="d4",
(75,15)*[o]+{}="d5",(89,15)*[o]+{}="d6",(85,0)*[o]+{}="d7",(111,15)*[o]+{}="d8",

\ar "01";"L"
\ar "L";"d1"
\ar "d5";"d6"
\ar "d8";"02"
\ar "03";"d0"
\ar "d3";"d4"
\ar "nP";"d7"
\ar @<0.5mm>@{-}"E";"nP"
\ar @<-0.5mm>@{-}"E";"nP"
\ar @<0.5mm>@{-}"L";"L2"
\ar @<-0.5mm>@{-}"L";"L2"
\end{xy}$$
which implies that there exists a monomorphism $\Ext _A^1(X,L) \to D''\Hom _A(M,X)$. 

We set $X = M$. Then $0\rightarrow \Ext _A^1(M,L) \rightarrow D''\End _A(M)$. 
Since $M$ is indecomposable, $\Soc (D''\End_A(M))$ is a simple $\End _A(M)$-module, and hence there exists an isomorphism
\[ \Soc (\Ext _A ^1(M,L)) \simeq \{ f\in D''(\End _A(M))\ |\ f(\Rad\End _A(M))=0 \}. \] 

We are ready to prove that (2) implies (1) in Proposition \ref{construction of AR-seq}. 
By the condition (2)(i), $0\to L\to E\to M\to 0$ does not split. As $L$ and $M$ are indecomposable by the condition (2)(ii), 
we show that every $f\in \Rad\Hom _A(X,M)$ factors through $E$ under the condition (2)(iii). Consider the commutative diagram

\bigskip
\hspace{20mm}
\begin{xy}
(0,30)*[o]+{0}="05",(20,30)*[o]+{L}="L3",(40,30)*[o]+{F}="F", (60,30)*[o]+{X}="X",(80,30)*[o]+{0}="06",
(0,15)*[o]+{0}="01",(20,15)*[o]+{L}="L",(40,15)*[o]+{E}="E", (60,15)*[o]+{M}="M",(80,15)*[o]+{0}="02",
(0,0)*[o]+{0}="03",(20,0)*[o]+{L}="L2",(40,0)*[o]+{\nu(P)}="nP", (60,0)*[o]+{\nu(M)}="nM",(80,0)*[o]+{0}="04",
\ar "01";"L"
\ar "L";"E"
\ar "E";"M"
\ar "M";"02"
\ar "03";"L2"
\ar "L2";"nP"
\ar "nP";"nM"_{\nu(p)}
\ar "nM";"04"
\ar "05";"L3"
\ar "L3";"F"
\ar "F";"X"
\ar "X";"06"
\ar @{-}@<0.5mm>"L";"L2"
\ar @{-}@<-0.5mm>"L";"L2"
\ar "E";"nP"
\ar "M";"nM"^{\varphi}
\ar @{-}@<0.5mm>"L3";"L"
\ar @{-}@<-0.5mm>"L3";"L"
\ar "F";"E"
\ar "X";"M"^{f}
\end{xy}

\medskip
\noindent
with exact rows, where $F$ is the pullback of $X$ and $E$ over $M$. Let $\xi$ be an element in $\Ext _A ^1(M,L)$ which represents the second sequence. 
Then, the condition (2)(iii) implies that $\Rad\End _A(M)\xi =0$ and $\xi \in \Soc(\Ext _A ^1(M,L))$. Consider the commutative diagram

\medskip
\hspace{20mm}
\begin{xy}
(0,15)*[o]+{0}="01",(20,15)*[o]+{\Ext _A ^1(M,L)}="extML",(60,15)*[o]+{D''\Hom _A(M,M)}="DendM",(9,15)="o",(31,15)="o2",
(0,0)*[o]+{0}="03",(20,0)*[o]+{\Ext _A ^1(X,L)}="extXL",(60,0)*[o]+{D''\Hom _A(M,X).}="DhomMX",(9,0)="o3",(31,0)="o4",(44,0)="o5",
\ar "01";"o"
\ar "o2";"DendM"
\ar "03";"o3"
\ar "o4";"o5"
\ar "extML";"extXL"^{\Ext _A ^1(f,L)}
\ar "DendM";"DhomMX"^{D''\Hom _A(M,f)}
\end{xy}

\medskip
\noindent
Let $\xi'$ be the image of $\xi$ under $\Ext _A ^1(M,L)\to D''\Hom _A(M,M)$. Since 
\[ D''\Hom _A(M,f)(\xi')(\psi)=\xi'(f\psi)\in \xi'(\Rad\End _A(M))=0\]
for $\psi \in \Hom _A (M,X)$, 
we have $\Ext _A^1(f,L)(\xi)=0$. Hence, $0 \to L \to F \to X \to 0$ splits. Then, it implies that $f$ factors through $E$.

\bibliographystyle{amsalpha}

\end{document}